\definecolor{lightgray}{rgb}{0.8, 0.8, 0.8}
\definecolor{darkgray}{rgb}{0.7, 0.7, 0.7}
\definecolor{darkblue}{rgb}{0, 0, .4}
\newcommand\absdot[2]{
	\node at #1 {\normalsize $\bullet$};
	\node at #1 [below] {$#2$};
}
\newcommand{\plotperm}[1]{
	\foreach \j [count=\i] in {#1} {
		\absdot{(\i,\j)}{};
	};
}
\newcommand{\plotpermbox}[4]{
	\draw [darkgray, ultra thick, line cap=round]
		({#1-0.5}, {#2-0.5}) rectangle ({#3+0.5}, {#4+0.5});
}
\newcommand{\plotpermgraph}[1]{
	\foreach \j [count=\i] in {#1} {
		\foreach \b [count=\a] in {#1} {
			\ifthenelse{\a<\i \AND \b>\j}{\draw (\a,\b)--(\i,\j);}{}
		};
	};
	\plotperm{#1};
}
\newcommand{\Av}{\mathop{\mathrm{Av}}}
\newcommand{\Ba}{\mathop{\mathrm{Ba}}}
\newcommand{\Avs}{\mathop{\mathrm{Av}_{\mathsf{S}}}}
\newcommand{\Obs}{\mathop{\mathrm{Ba}_{\mathsf{S}}}}
\newcommand{\C}{\ensuremath{{\mathcal C}}\xspace}
\newcommand{\cD}{\ensuremath{{\mathcal D}}\xspace}
\renewcommand{\S}{\ensuremath{{\mathcal S}}\xspace}
\newcommand{\E}{\ensuremath{{\mathcal E}}\xspace}
\newcommand{\X}{\ensuremath{{\mathcal X}}\xspace}
\newcommand{\Y}{\ensuremath{{\mathcal Y}}\xspace}
\newcommand{\Z}{\ensuremath{{\mathcal Z}}\xspace}
\newcommand{\Sep}{\ensuremath{\mathsf{Sep}}\xspace}
\newcommand{\I}{\ensuremath{\mathsf{I}}\xspace}
\newcommand{\D}{\ensuremath{\mathsf{D}}\xspace}
\newcommand{\La}{\ensuremath{\mathsf{L}}\xspace}
\newcommand{\cA}{\ensuremath{\mathscr{A}}\xspace}
\newcommand{\uni}{\ensuremath{\mathbb{1}}\xspace}
\newcommand{\cUs}{\ensuremath{\mathscr{U}_\mathsf{S}}\xspace}
\newcommand{\Rep}{\ensuremath{\mathscr{R}}\xspace}
\newcommand{\cl}{\mathop{\mathrm{cl}}}
\newcommand{\rel}{\ensuremath{\sim}\xspace}
\newcommand{\Fam}[1]{\ensuremath{\mathfrak{#1}}}
\newcommand{\FSplit}{\Fam{Split}}
\newcommand{\merge}{\ensuremath{\odot}\xspace}
\newcommand{\avp}[1]{%
\raisebox{-2pt}{\begin{tikzpicture}
\path[use as bounding box] (-0.18,-0.18) rectangle (0.1,0.1);
\begin{scope}[scale=0.04]
\fill[red]  (0,0) circle (4.56);
\fill[white] (0,0) circle (4.0);
\fill[fill=red] (-2.8,3.2) -- (3.2,-2.8) -- (2.8,-3.2) -- (-3.2,2.8) -- cycle;
\end{scope}
\node at (0,0) {\textup{\textsf{\tiny #1}}};
\end{tikzpicture}
}}
\newcommand{\acb}{\ensuremath{\avp{132}}\xspace}
\newcommand{\bac}{\ensuremath{\avp{213}}\xspace}
\newcommand{\bca}{\ensuremath{\avp{231}}\xspace}
\newcommand{\cab}{\ensuremath{\avp{312}}\xspace}
\newtheorem{theorem}{Theorem}
\newtheorem{corollary}[theorem]{Corollary}
\newtheorem{lemma}[theorem]{Lemma}
\newtheorem{observation}[theorem]{Observation}
\newtheorem*{question}{Question}
\title{Unsplittable classes of separable permutations}
\author{Michael Albert  \and Vít Jelínek\thanks{Supported by project 16-01602Y of 
the Czech Science Foundation. This work also received financial support from the Neuron Foundation for Support of Science.}}
\date{}
\begin{document}
\maketitle

\begin{abstract}
A permutation class is splittable if it is contained in the merge of two of its proper subclasses. We characterise the unsplittable subclasses of the class of separable permutations both structurally and in terms of their bases.
\end{abstract}

\section{Introduction}

In recent years one of the main areas of investigation within the study of pattern-avoiding permutations, or permutation classes has been to develop structural characterisations for some classes in terms of simpler ones. Such a characterisation can have many useful consequences including: exact or approximate enumerations, determination of the type (e.g., rational, algebraic) of the generating function of a class, and algorithms for membership in such classes. The survey articles of Vatter \cite{Vatter2015Permutation-cla} and Brignall \cite{Brignall2010A-survey-of-sim} are perhaps the best introduction to this endeavour.

Among the constructions used to build new classes from old, one of the most natural to consider is that of \textit{merging} permutations (or permutation classes). We delay formal definitions to the next section, but briefly a permutation $\pi$ is a merge of two permutations $\sigma$ and $\tau$ if its elements can be partitioned into two sets which are isomorphic (in the sense of relative ordering of corresponding pairs of points) to $\sigma$ and $\tau$ respectively. However, this construction has been relatively ignored in the literature.  Merges been used in  \cite{Bona2012On-the-Best-Upp, Bona2014A-New-Upper-Bou, Claesson2012Upper-bounds-fo} to  establish bounds on the growth rates of certain permutation classes. A restricted version of the construction was used in \cite{Albert2010Growth-rates-fo} to show that certain collections of classes have equal growth rates. Those classes which are obtained by merging the classes consisting of monotone increasing or monotone decreasing permutations were all shown to be finitely based in \cite{Kezdy1996Partitioning-pe}, and also played a role in \cite{Albert2007On-the-length-o} (as evidence for a conjecture concerning the lengths of longest pattern avoiding subsequences in random permutations). Apparently the first paper where merging, and its dual, splitting, played a central role was \cite{Jelinek2015Splittings-and-}.

Call a class \textit{splittable} if it is contained in the merge of two of its proper subclasses. Then, a natural question to ask is what are the essential building blocks with respect to this notion - that is, can we characterise unsplittable classes? The previously mentioned paper \cite{Jelinek2015Splittings-and-} began the study of this and related notions, and this paper continues it in a somewhat more limited context. Namely, aside from some initial general observations we work within the class $\Sep$ of separable permutations. This class of permutations is the natural analog of the graph class consisting of the \emph{cographs} -- those graphs constructible from the one vertex graph by complementation and disjoint union. As a tool to help characterise the unsplittable subclasses of $\Sep$ we investigate an interesting monoid structure on the so-called representable subclasses of $\Sep$ -- this structure is key to showing that the representable classes and the unsplittable ones coincide. 

As we will discuss in the conclusions, we view this work as representing the first steps of the investigation of splittability within well-behaved classes. 

\section{Terminology, notation and basic observations}

Let $n$ be a non-negative integer, and let $[n] = \{1,2,\dots,n\}$. A \emph{permutation} (of $[n]$) is a bijective map $\pi\colon [n] \to [n]$. The set of all permutations of $[n]$ is denoted $\S_n$ and the set of all permutations is denoted $\S$. If $\pi \in \S_n$ then we say that the \emph{size} of $\pi$ is $n$ and write $|\pi| = n$. We frequently represent permutations in \emph{one line notation} as the sequence of their values $\pi_1 \pi_2 \dotsb \pi_n$ (where $\pi_i = \pi(i)$). Since a permutation is also a set of points in the plane, i.e., the set of points $(i, \pi_i)$ for $i \in [n]$, we can speak of relationships among these points in the horizontal direction thought of as position (using words like before, after, to the left of, or to the right of), or in the vertical direction thought of as value (using words like above, below, greater or lesser). For instance, the set of elements lying above the leftmost point in $\pi$ consists of the subset (or subsequence in the one line version) of all those points $(j, \pi_j)$ with $\pi_1 < \pi_j$.

The geometric viewpoint of permutations as sets of points in the plane makes it clear that there are eight symmetries corresponding to the symmetries of the square that can be applied to permutations. In one line notation, these are generated by reversal, complementation (which, for $\pi \in \S_n$ replaces each value $i$ by $n+1-i$), and ordinary functional inverse. All of our results respect these symmetries which frequently reduces the number of cases that need to be considered.

Given $\pi \in \S_n$ and a $k$-element subset, $\Sigma$, of the permutation $\pi$ there is a unique permutation $\sigma \in \S_k$ such that there exists a correspondence between $\Sigma$ and $\sigma$ that preserves both the left to right, or positional, ordering, and also the bottom to top, or value, ordering. We say that $\sigma$ is the \emph{pattern} of the elements $\Sigma$. For instance, the pattern of the elements of $256143$ occurring in the first, third and sixth positions (i.e., corresponding to the subsequence $263$) is the permutation $132$. If $\sigma \in \S_k$ is the pattern of some subset of $\pi$ then we say that $\sigma$ is \emph{contained in} $\pi$ and write $\sigma \leq \pi$. This relationship is clearly a partial order on permutations. When we wish to draw attention to a particular subset of $\pi$ whose pattern is $\sigma$ we sometimes say that $\sigma$ is a \emph{subpermutation} of $\pi$.

A \emph{permutation class} or \emph{class of permutations}, $\C$, is a set of permutations hereditarily closed downwards with respect to containment. That is, if $\pi \in \C$ and $\sigma \leq \pi$ then $\sigma \in \C$. Note that our definition permits empty permutations, and the empty permutation belongs to every non-empty permutation class. One way to characterise a permutation class is through a set of \emph{forbidden permutations} or \emph{obstructions}. That is, given a set $F$ of permutations we can define the permutation class of permutations \emph{avoiding} $F$:
\[
\Av(F) = \{ \pi \in \S \, \colon \, \forall \, \sigma \in F \: \sigma \not\leq \pi \}.
\]
For any permutation class $\C$ we define the \emph{basis} of $\C$, $\Ba(\C)$ to be the set of minimal elements of $\S \setminus \C$ (with respect to $\leq$). Then  $\C = \Av(\Ba(\C))$ and also if $F$ is an antichain in the ordering on permutations, then $F = \Ba(\Av(F))$.

\begin{figure}
\begin{center}
	$\pi\oplus\sigma=$
	\begin{tikzpicture}[scale=0.2, baseline=(current bounding box.center)]
		\plotpermbox{0}{0}{2}{2};
		\plotpermbox{3}{3}{5}{5};
		\node at (1,1) {$\pi$};
		\node at (4,4) {$\sigma$};
	\end{tikzpicture}
\quad\quad\quad\quad
	$\pi\ominus\sigma=$
	\begin{tikzpicture}[scale=0.2, baseline=(current bounding box.center)]
		\plotpermbox{0}{3}{2}{5};
		\plotpermbox{3}{0}{5}{2};
		\node at (1,4) {$\pi$};
		\node at (4,1) {$\sigma$};
	\end{tikzpicture}
\end{center}
\caption{The sum and skew sum operations.}
\label{fig-sums}
\end{figure}
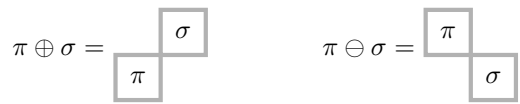

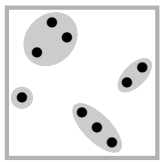
\begin{figure}
\begin{center}
	\begin{tikzpicture}[scale=0.2, baseline=(current bounding box.center)]
		\draw [lightgray, fill] (1,4) circle (20pt);
		\draw[lightgray, fill, rotate around={-45:(2.9,7.9)}] (2.9,7.9) ellipse (45pt and 55pt);
		\draw[lightgray, fill, rotate around={45:(6,2)}] (6,2) ellipse (25pt and 60pt);
		\draw[lightgray, fill, rotate around={-45:(8.5,5.5)}] (8.5,5.5) ellipse (20pt and 40pt);
		\plotpermbox{0.5}{0.5}{9.5}{9.5};
		\plotperm{4,7,9,8,3,2,1,5,6};
	\end{tikzpicture}
\end{center}
\caption{The plot of $479832156$, an inflation of $2413$, specifically $2413[1,132, 321, 12]$.}
\label{fig-479832156}
\end{figure}

There are various operations that apply to permutations and by extension to permutation classes. The \emph{sum}, $\pi \oplus \sigma$ of two permutations $\pi$ and $\sigma$ is the permutation obtained by placing a copy of $\sigma$ above and to the right of a copy of $\pi$. Formally, if $\pi \in \S_n$ and $\sigma \in \S_k$, then $\pi \oplus \sigma \in \S_{n+k}$ and $(\pi \oplus \sigma)(i) = \pi(i)$ for $i \leq n$, while $(\pi \oplus \sigma)(i) = n + \sigma(i-n)$ for $i > n$. The \emph{skew-sum} $\pi \ominus \sigma$ is similarly defined, except $\sigma$ is placed below and to the right of $\pi$. See Figure~\ref{fig-sums}. If a permutation, $\pi$, can be written as a sum of two non-empty permutations then we say that $\pi$ is \emph{sum-decomposable}. If not, then of course we say that $\pi$ is \emph{sum-indecomposable}. \emph{Skew-decomposable} and \emph{skew-indecomposable} are defined similarly. Clearly, sum and skew-sum are associative (but not commutative) operations on permutations, and every permutation can be uniquely represented as a sum of one or more sum-indecomposable permutations or as a skew-sum of skew-indecomposable permutations. A permutation can be both sum- and skew-indecomposable (for instance $2413$) but every sum-decomposable permutation is skew-indecomposable and vice versa.

If $\C$ and $\cD$ are permutation classes then $\C \oplus \cD$ (resp.~$\C \ominus \cD$) is defined to be the set of all permutations which are the sum (skew-sum) of a permutation in $\C$ with one in $\cD$. It is routine to verify that $\C \oplus \cD$ ($\C \ominus \cD$) is also a permutation class.

Sums and skew-sums are a particular case of a more general operation called \emph{inflation}. An \emph{interval} in a permutation is a contiguous set of positions on which the values also form a contiguous set. We say that an interval is \emph{proper} if it is not the whole permutation, and \emph{non-trivial} if it is not a singleton. A permutation whose only intervals are singletons and itself is called \emph{simple}. For instance, in the permutation $2756143$ the second through fourth elements form an interval (positions from 2 to 4, and values from 5 to 7), and the last two elements also form an interval. 

Given $\sigma \in \S_k$ and permutations $\tau_1, \tau_2, \dots, \tau_k$ the \emph{inflation}, $\pi = \sigma[\tau_1, \tau_2, \dots, \tau_k]$ is a permutation of size $\sum_{i=1}^{k} |\tau_i|$ obtained from $\sigma$ by replacing each element $\sigma_i$ by an interval whose pattern is $\tau_i$ where the relationships by position and value between the intervals corresponding to $\sigma_i$ and $\sigma_j$ are the same as those between $\sigma_i$ and $\sigma_j$ themselves. For instance $2756143 = 2413[1,312,1,21]$, and see also Figure \ref{fig-479832156}. Observe that $\sigma \oplus \tau = 12[\sigma, \tau]$ while $\sigma \ominus \tau = 21[\sigma, \tau]$. For permutation classes $\C$ and $\cD$ we define $\C[\cD]$ to be the set of all permutations obtained by inflating a permutation from $\C$ with permutations from $\cD$. Again it is clear that $\C[\cD]$ is a permutation class.

Some particular permutation classes that we will be using repeatedly are:
\begin{itemize}
\setlength{\itemsep}{0pt}
\item
\uni, which contains only the permutation 1 (and the empty permutation - we omit to mention this in what follows),
\item
\I, which contains only the increasing permutations,
\item
\D, which contains only the decreasing permutations,
\item
$\acb = \Av(132)$, together with its symmetries $\bac = \Av(213)$, $\bca = \Av(231)$ and $\cab = \Av(312)$.
\item
$\La$, the class of \emph{layered} permutations, which is the closure of $\D$ under $\oplus$.
\end{itemize}

Note that $\uni=\Av(12,21)$, $\I=\Av(21)$, $\D=\Av(12)$ and $\La=\Av(231,312)=\I[\D]$.

A permutation class, $\C$, is said to be \emph{sum-closed} if for all $\sigma, \tau \in \C$ we have $\sigma \oplus \tau \in \C$. \emph{Skew-closed} is defined similarly.

\begin{observation}
\label{obs-IandDclosure}
A class $\C$ is sum-closed if and only if $\C = \I[\C]$ and skew-closed if and only if $\C = \D[\C]$.
\end{observation}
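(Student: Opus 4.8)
The plan is to prove the sum-closed equivalence in full, after which the skew-closed equivalence follows at once by applying the complement symmetry, which exchanges $\oplus$ with $\ominus$ and $\I$ with $\D$ (so a class is sum-closed exactly when its complement is skew-closed, and $\I[\C]$ complements to $\D[\,\cdot\,]$ of the complemented class). The single fact on which everything rests is that inflating an increasing permutation is the same as forming a direct sum: if $\iota = 12\cdots k \in \I$, then $\iota[\tau_1,\dots,\tau_k] = \tau_1 \oplus \tau_2 \oplus \cdots \oplus \tau_k$, extending the already-noted identity $\sigma \oplus \tau = 12[\sigma,\tau]$. I would record this first, concluding that $\I[\C]$ is precisely the set of all finite direct sums $\tau_1 \oplus \cdots \oplus \tau_k$ of members $\tau_i \in \C$, where $k=1$ already gives $\C \subseteq \I[\C]$ (since $\tau = 1[\tau]$) and $k=0$ contributes the empty permutation.

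Given this, both implications are short. For $\C = \I[\C] \Rightarrow \C$ sum-closed, I would take $\sigma,\tau \in \C$ and note $\sigma \oplus \tau = 12[\sigma,\tau] \in \I[\C] = \C$. For the converse, assuming $\C$ sum-closed, the inclusion $\C \subseteq \I[\C]$ is free, so it remains to show $\I[\C] \subseteq \C$: writing an arbitrary $\pi \in \I[\C]$ as $\pi = \tau_1 \oplus \cdots \oplus \tau_k$ with each $\tau_i \in \C$, I would induct on $k$. The cases $k=0$ (the empty permutation, which lies in $\C$ by the convention that a non-empty class contains it) and $k=1$ are immediate, and the inductive step uses associativity of $\oplus$ to split off the last summand, $\pi = (\tau_1 \oplus \cdots \oplus \tau_{k-1}) \oplus \tau_k$, where the first factor lies in $\C$ by the induction hypothesis and the whole then lies in $\C$ by sum-closedness.

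There is no genuinely hard step; the entire content is the observation that inflating the increasing permutation coincides with the direct sum. The only points needing mild care are the associativity bookkeeping in the induction and the degenerate empty-permutation cases, for which one leans on the standing convention that $\I$ is understood to contain the empty permutation and that every non-empty class does too. The skew-closed statement is then obtained verbatim with $\oplus$, $\I$, and $12$ replaced throughout by $\ominus$, $\D$, and $21$.
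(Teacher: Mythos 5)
Your proof is correct: the paper states this as an unproved Observation (treating it as immediate from the definitions), and your argument—identifying $\I[\C]$ with the set of finite direct sums of elements of $\C$, checking both inclusions by induction on the number of summands, and dispatching the skew case by complement symmetry—is exactly the routine verification the paper leaves implicit. Nothing to correct.
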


The following trivial observation can be surprisingly useful in practice:

\begin{observation}
\label{obs-closure-basis}
Let $\C$ be a permutation class and $\cD$ a non-empty subclass of $\C$. Then $\C = \C[\cD]$ if and only if no basis element of $\C$ belongs to $\C[\cD]$, and $\C = \cD[\C]$ if and only if no basis element of $\C$ belongs to $\cD[\C]$.
\end{observation}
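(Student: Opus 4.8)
The plan is to prove both biconditionals by the same argument, treating $\C = \C[\cD]$ in detail and noting at the end that $\cD[\C]$ is handled identically. The first step is to settle one inclusion once and for all. Since $\cD$ is non-empty it contains the one-point permutation $1$, so every $\sigma \in \C$ can be written as the inflation $\sigma[1,1,\dots,1]$ using only copies of $1 \in \cD$; hence $\sigma \in \C[\cD]$, and therefore $\C \subseteq \C[\cD]$ holds unconditionally. Consequently the equality $\C = \C[\cD]$ is equivalent to the reverse inclusion $\C[\cD] \subseteq \C$, and the whole task reduces to showing that $\C[\cD] \subseteq \C$ if and only if no basis element of $\C$ lies in $\C[\cD]$.

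The forward direction of this reduced equivalence is immediate: a basis element of $\C$ lies in $\S \setminus \C$ by the definition of $\Ba(\C)$, so if $\C[\cD] \subseteq \C$ then no such element can belong to $\C[\cD]$.

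For the converse I would argue by contraposition. Suppose $\C[\cD] \not\subseteq \C$ and pick $\pi \in \C[\cD]$ with $\pi \notin \C$. Because $\C = \Av(\Ba(\C))$, the permutation $\pi$ must contain some basis element, that is, there is $\beta \in \Ba(\C)$ with $\beta \le \pi$. The key step is now to invoke the fact, recorded just before the statement, that $\C[\cD]$ is itself a permutation class and hence downward closed under $\le$; from $\beta \le \pi$ and $\pi \in \C[\cD]$ we conclude $\beta \in \C[\cD]$. Thus some basis element of $\C$ belongs to $\C[\cD]$, which is exactly the negation required. This completes the reduced equivalence, and hence the first biconditional.

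Finally, the statement about $\cD[\C]$ follows verbatim: one needs only the analogous trivial inclusion $\C \subseteq \cD[\C]$, obtained by inflating the point $1 \in \cD$ by an arbitrary member of $\C$, together with the fact that $\cD[\C]$ is again a permutation class. I do not expect a genuine obstacle here. The single point that must not be overlooked is the downward-closure of $\C[\cD]$ (respectively $\cD[\C]$), since it is precisely what allows an arbitrary witness in $\C[\cD] \setminus \C$ to be replaced by a basis element of $\C$ that it contains.
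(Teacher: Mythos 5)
Your proof is correct and is exactly the argument the paper leaves implicit: the paper states this as a trivial observation with no proof at all, and your write-up (the automatic inclusion $\C \subseteq \C[\cD]$ obtained by inflating with copies of $1 \in \cD$, then using downward closure of the class $\C[\cD]$ to replace an arbitrary witness of $\C[\cD] \not\subseteq \C$ by a basis element of $\C$ contained in it) is the standard reasoning the authors intend. The only point worth noting is that your opening claim ``$\cD$ non-empty implies $1 \in \cD$'' relies on reading ``non-empty'' as ``contains a non-empty permutation'' (excluding the degenerate class consisting of the empty permutation alone), which is clearly the paper's intended convention, since the statement itself would fail otherwise.
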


Some of the consequences of this observation are:
\begin{itemize}
\setlength{\itemsep}{0pt}
\item
$\C$ is $\oplus$-closed, i.e., $\C = \I[\C]$ if and only if all the basis elements of $\C$ are sum-indecomposable. 
\item
$\C = \C[\C]$ if and only if all the basis elements of $\C$ are simple. 
\item
$\C = \C[\I]$ if and only if no basis element of $\C$ has a non-trivial interval in $\I$ (such permutations are sometimes called \emph{plus-irreducible}).
\end{itemize}

\subsection{Separable permutations}

Permutation classes whose bases consist only of simple permutations are closed under inflation. We have already encountered three such classes: \uni, \I and \D. Any other class of this type must contain both 12 and 21, and if we begin from the class $\{\emptyset, 1, 12, 21\}$ and then form its closure under inflation we obtain the class \Sep of \emph{separable} permutations. \Sep is perhaps most naturally defined recursively as the smallest non-empty class having the property that, if $\sigma, \tau \in \Sep$ then also $\sigma \oplus \tau, \sigma \ominus \tau \in \Sep$. Hence, every non-singleton permutation in \Sep is either sum- or skew-decomposable. The basis of \Sep is $\{2413, 3142\}$ (this is easily verified, but see, e.g., Bose et al.~\cite{BBL}).

To every permutation $\pi \in \Sep$, we associate a \emph{decomposition tree} $T(\pi)$. The leaves of $T(\pi)$ correspond to the elements of $\pi$, and there are two types of internal nodes: $\oplus$-nodes and $\ominus$-nodes. The  (obvious) correspondence between trees and permutations may be defined  recursively: the singleton permutation is represented by a tree consisting of a  single leaf. If $\pi$ is a sum-decomposable permutation of the form $\pi=\pi_1\oplus\pi_2\oplus\dotsb\oplus\pi_k$ with $\pi_i$ being  sum-indecomposable, then $T(\pi)$ has a root which is a $\oplus$-node with $k$  children, whose subtrees are $T(\pi_1),\dotsc,T(\pi_k)$. Skew-decomposable  permutations are handled analogously. Note that in a decomposition tree every child of a $\oplus$-node is a $\ominus$-node or a leaf, and every child of a $\ominus$-node is a $\oplus$-node or a leaf.

The \emph{depth} of a node $w$ in a tree $T$ is the number of edges on the path from $w$ to the root. The depth of $T$ is then the maximum of the depths of the leaves of~$T$. A \emph{bottom leaf} of $T$ is a leaf whose depth is equal to the depth of~$T$. The depth of a separable permutation is the depth of its decomposition tree. The \emph{least common ancestor} of two distinct leaves in  a tree $T$ is the unique internal node of maximum depth which is a common ancestor of the two leaves.


We say that a tree $T$ is \emph{slim} if each of its internal nodes has exactly two children, and at least one of these children is a leaf. Note that in a slim tree $T$ of depth $d$, there is exactly one internal node of depth $i$ for any $i\in\{0,\dotsc,d-1\}$. A \emph{slim} permutation is a separable permutation whose decomposition tree is slim.

\subsection{Merging permutations and splitting classes}

The last construction we need to introduce for permutations, which is central to this paper, is the \emph{merge}. Given two permutations $\sigma$ and $\tau$ of sizes $k$ and $\ell$ respectively the \emph{merges} of $\sigma$ and $\tau$ are those permutations of size $k+l$ whose elements can be partitioned into a set of $k$ elements whose pattern is $\sigma$ and a set of $\ell$ elements whose pattern is $\tau$. For example, all five non-decreasing permutations of size 3 are merges of $12$ with $1$, and all permutations of size 4 except 1234, 2143, 3412, and 4321 are merges of 12 with 21. We write the set of merges of $\sigma$ with $\tau$ as $\sigma \odot \tau$, and use the same notation for permutation classes. Again it is clear that the merge of two permutation classes is itself a permutation class. At the level of permutation classes, $\odot$ is an associative and commutative operation. We frequently think of the partition corresponding to a merge as being given by a red-blue colouring of the elements of a permutation. In that context, to prove $\pi \not \in \C \odot \cD$ requires showing that for every red-blue colouring of $\pi$ either the red elements do not belong to $\C$ (i.e., contain a basis element of $\C$) or the blue elements do not belong to $\cD$. No proper permutation class is closed under the merge operation since by merging $n$ copies of 1 we can produce every permutation in $\S_n$.


We say that a permutation class $\C$ is \emph{splittable}, if there is a sequence 
$(\C_i)_{i=1}^k$ of proper subclasses of $\C$ such that $\C \subseteq \odot_{i=1}^k \C_i$. 
If $X$ is a set of permutation classes, we say that $\C$ is $X$-splittable and write 
$\C \in \FSplit(X)$ if there is a finite sequence $(\C_i)_{i=1}^k$ of classes, each 
belonging to $X$ such that $\C$ is $(\C_i)_{i=1}^k$-splittable. In the case where 
$X$ is a singleton, say $X = \{ \cD \}$ then we often simply say that $\C$ is $\cD$-splittable.

A class is called \emph{atomic} if it is not the union of a finite collection (hence two) of its proper subclasses. Obviously any unsplittable class is atomic since $\X \odot \Y$ always contains $\X \cup \Y$. The main property of atomic classes that we need is contained in the following observation (due in a more general context to Fra{\"\i}ss{\'e} \cite{Fraisse2000Theory-of-relat}).

\begin{lemma}
A class $\C$ is atomic if and only if for all $\pi, \sigma \in \C$ there exists $\tau \in \C$ with $\pi, \sigma \leq \tau$.
\end{lemma}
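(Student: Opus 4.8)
The plan is to prove the characterisation of atomic classes via the so-called joint embedding property, in both directions.

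First, I would establish the easy direction. Suppose that for all $\pi, \sigma \in \C$ there exists $\tau \in \C$ with $\pi, \sigma \leq \tau$ (the joint embedding property). I want to show $\C$ is atomic, so suppose for contradiction that $\C = \A \cup \B$ where $\A, \B$ are proper subclasses of $\C$. Since each is proper, there is some $\pi \in \C \setminus \A$ and some $\sigma \in \C \setminus \B$. By hypothesis there is $\tau \in \C$ containing both $\pi$ and $\sigma$. Now $\tau$ lies in $\C = \A \cup \B$, so $\tau \in \A$ or $\tau \in \B$. But $\A$ and $\B$ are downward closed (being permutation classes), so $\tau \in \A$ would force $\pi \leq \tau$ to lie in $\A$, contradicting the choice of $\pi$; symmetrically $\tau \in \B$ forces $\sigma \in \B$, again a contradiction. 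Hence no such decomposition exists and $\C$ is atomic.

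For the converse, I would prove the contrapositive: if $\C$ fails the joint embedding property, then $\C$ is not atomic. So suppose there exist $\pi, \sigma \in \C$ admitting no common extension within $\C$. The natural candidate decomposition is to set
\[
\A = \{ \rho \in \C \colon \pi \not\leq \rho \}, \qquad \B = \{ \rho \in \C \colon \sigma \not\leq \rho \}.
\]
Each of these is a permutation class: if $\rho \in \A$ and $\rho' \leq \rho$ then $\pi \leq \rho'$ would give $\pi \leq \rho$, so $\rho' \in \A$, and similarly for $\B$. Each is proper, since $\pi \in \C \setminus \A$ and $\sigma \in \C \setminus \B$. It remains to check $\C = \A \cup \B$: if some $\rho \in \C$ lay in neither, then $\pi \leq \rho$ and $\sigma \leq \rho$, exhibiting a common extension of $\pi$ and $\sigma$ inside $\C$ and contradicting our assumption. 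Thus $\C$ is a union of two proper subclasses and is not atomic.

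The argument is essentially self-contained and elementary, relying only on downward closure of permutation classes and the definition of atomicity as stated. The one point requiring a little care is the direction of the contrapositive in the second part and the verification that $\A$ and $\B$ really are classes (downward closed) and genuinely proper; these are the only places where a reader might stumble, but neither constitutes a serious obstacle. I would not expect any hard technical step here, since this is the standard Fra{\"\i}ss{\'e}-type correspondence specialised to the containment order on permutations.
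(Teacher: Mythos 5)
Your proof is correct and follows essentially the same route as the paper: your classes $\A$ and $\B$ are exactly the paper's $\C \cap \Av(\pi)$ and $\C \cap \Av(\sigma)$, and the other direction is the paper's argument (pick $\pi \in \C \setminus \cD$, $\sigma \in \C \setminus \E$, and note any common extension escapes $\C$) recast as a proof by contradiction. No gaps; the only cosmetic difference is that you organise the two directions as ``easy direction plus contrapositive'' rather than the paper's pair of contrapositives.
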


\begin{proof}
Suppose that there exist $\pi, \sigma \in \C$ having no common extension in $\C$. Let $\cD = \C \cap \Av(\pi)$ and $\E = \C \cap \Av(\sigma)$. Then both $\cD$ and $\E$ are proper subclasses of $\C$ but $\C = \cD \cup \E$ so $\C$ is not atomic. Conversely, if $\C$ is not atomic, say $\C = \cD \cup \E$ where $\cD$ and $\E$ are proper subclasses of $\C$ then we choose $\pi \in \C \setminus \cD$ and $\sigma \in \C \setminus \E$. Any common extension of $\pi$ and $\sigma$ belongs to neither $\cD$ nor $\E$, hence not to $\C$ either.
\end{proof}

The following lemma provides us with a convenient characterisation of splittability:

\begin{lemma}
A class $\C$ is splittable if and only if there exist permutations $\pi, \pi' \in \C$ such that every $\sigma \in \C$ has a red-blue colouring whose red part avoids $\pi$ and whose blue part avoids $\pi'$.
\end{lemma}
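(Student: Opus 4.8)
The plan is to prove both directions by unpacking the definition of splittability in the case $k=2$ and then arguing that two-way splittability already captures the general notion.

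First I would prove the easier direction. Suppose $\C$ is splittable, so there is a finite sequence $(\C_i)_{i=1}^k$ of proper subclasses of $\C$ with $\C \subseteq \odot_{i=1}^k \C_i$. Since each $\C_i$ is a proper subclass of $\C$, there is a permutation $\pi_i \in \C \setminus \C_i$, which is to say $\pi_i$ is not contained in any member of $\C_i$. The key observation is that we can reduce $k$ colours to $2$ colours by grouping: given the colouring witnessing $\sigma \in \odot_{i=1}^k \C_i$, colour the elements assigned to $\C_1$ red and all remaining elements blue. The red part lies in $\C_1$, hence avoids $\pi_1$; and taking $\pi=\pi_1$ and $\pi'=\pi_1$ does not immediately work, so instead I would argue that it suffices to exhibit a single two-class split. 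The cleanest route is to first establish the auxiliary fact that $\C$ is splittable if and only if $\C$ is $\{\C\}$-splittable into exactly two parts, i.e.\ there exist proper subclasses $\cD,\E \subseteq \C$ with $\C \subseteq \cD \odot \E$; this follows because $\odot$ is associative, so $\odot_{i=1}^k\C_i = \C_1 \odot (\odot_{i=2}^k \C_i)$, and both factors are contained in (possibly equal to) $\C$, while at least $\C_1$ is proper; one then absorbs into the second factor whatever is needed so that both factors are proper subclasses of $\C$ (taking $\cD=\C\cap\Av(\pi_1)$ and $\E$ the intersection of $\C$ with the merge of the remaining factors, shrunk if necessary). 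Having reduced to $\C \subseteq \cD \odot \E$ with $\cD,\E$ proper, choose $\pi \in \C\setminus\cD$ and $\pi'\in\C\setminus\E$. Then for any $\sigma\in\C$, the witnessing colouring puts the red part in $\cD$ (so it avoids $\pi$) and the blue part in $\E$ (so it avoids $\pi'$), which is exactly the desired conclusion.

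For the converse, suppose $\pi,\pi'\in\C$ are such that every $\sigma\in\C$ admits a red-blue colouring whose red part avoids $\pi$ and whose blue part avoids $\pi'$. Set $\cD = \C\cap\Av(\pi)$ and $\E=\C\cap\Av(\pi')$. These are permutation classes (intersections of classes), they are subclasses of $\C$, and they are \emph{proper} subclasses precisely because $\pi\in\C\setminus\cD$ and $\pi'\in\C\setminus\E$. The colouring hypothesis says exactly that every $\sigma\in\C$ is a merge of a permutation avoiding $\pi$ with one avoiding $\pi'$, where both parts are themselves subpermutations of $\sigma\in\C$ and hence lie in $\C$; thus the red part lies in $\cD$ and the blue part in $\E$, giving $\sigma\in\cD\odot\E$. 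Therefore $\C\subseteq\cD\odot\E$ with $\cD,\E$ proper subclasses, so $\C$ is splittable.

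The main subtlety, and the step I would be most careful about, is the reduction from a $k$-fold split to a $2$-fold split in the forward direction: one must verify that associativity of $\odot$ genuinely lets us collapse $\C_2,\dots,\C_k$ into a single factor $\E$ that remains a \emph{proper} subclass of $\C$, rather than accidentally equalling $\C$. This is handled by intersecting with $\C$ and with $\Av(\pi')$ for a suitable witness $\pi'\in\C\setminus\C_2$ (and noting that shrinking a factor in a merge containment only preserves the containment when done compatibly). The remaining steps are routine: closure of $\cD,\E$ under containment, and the straightforward bookkeeping that a colouring of $\sigma$ restricts its two parts to subpermutations of $\sigma$ and hence to members of $\C$.
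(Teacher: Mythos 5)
Your converse direction (colouring $\Rightarrow$ splittable) is correct and coincides with the paper's argument. The genuine gap is in the forward direction, in the reduction from a $k$-fold splitting to a $2$-fold one. Writing $\C \subseteq \C_1 \odot (\C_2 \odot \dotsb \odot \C_k)$ and taking $\E = \C \cap (\C_2 \odot \dotsb \odot \C_k)$ is fine as far as it goes: the blue part of any witnessing colouring of $\sigma \in \C$ is a subpermutation of $\sigma$, hence lies in $\C$, so $\C \subseteq \C_1 \odot \E$ still holds. But nothing prevents $\E$ from equalling $\C$; this happens exactly when $\C \subseteq \C_2 \odot \dotsb \odot \C_k$, e.g.\ when the original splitting has redundant factors. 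Your proposed repair---shrinking $\E$ by further intersecting with $\Av(\pi')$ for a witness $\pi' \in \C \setminus \C_2$---does not preserve the containment $\C \subseteq \C_1 \odot \E$, and you give no argument that it does. The blue parts of the colourings you were handed lie in $\C_2 \odot \dotsb \odot \C_k$, and knowing $\pi' \notin \C_2$ in no way forces those blue parts to avoid $\pi'$: already for $\C = \Av(321)$ with $\C_1 = \C_2 = \C_3 = \I$ and $\pi' = 21$ we have $\pi' \in \C_2 \odot \C_3$, so the given colourings place no restriction at all on occurrences of $\pi'$ in the blue part. Your parenthetical that the shrinking must be ``done compatibly'' is precisely the missing proof, and it is not routine bookkeeping: what you need is the existence of two patterns governing a single two-part colouring of every $\sigma\in\C$, so asserting that a ``suitable'' $\pi'$ exists is close to assuming the lemma being proved. (A smaller slip in the same paragraph: the factor $\C_2 \odot \dotsb \odot \C_k$ need not be contained in $\C$ at all---merges of subclasses of $\C$ can escape $\C$, e.g.\ $4321 \in \Av(321)\odot\Av(321)$---so the intersection with $\C$ is mandatory, not optional.)

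The paper closes exactly this gap with a minimality argument you could adopt in one sentence: choose the splitting $(\C_i)_{i=1}^k$ of minimum possible length. If $k > 2$, then $\E = \C \cap (\C_2 \odot \dotsb \odot \C_k)$ is a proper subclass of $\C$ (otherwise $\C \subseteq \C_2 \odot \dotsb \odot \C_k$ would be a shorter splitting), and then $\C \subseteq \C_1 \odot \E$ is a splitting of length $2 < k$, again contradicting minimality; hence $k = 2$. Equivalently, an induction on $k$ works: when your $\E$ equals $\C$, do not shrink---recurse on the shorter splitting $\C \subseteq \C_2 \odot \dotsb \odot \C_k$. Once $\C \subseteq \cD \odot \E$ with $\cD, \E$ proper is in hand, your choices $\pi \in \C \setminus \cD$ and $\pi' \in \C \setminus \E$, together with the observation that a downward-closed class avoids any permutation it does not contain, finish the proof exactly as you wrote it.
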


\begin{proof}
Suppose that $\C$ is splittable and choose a finite sequence $(\C_i)_{i=1}^k$ of proper subclasses of $\C$ of minimum possible length such that $\C\subseteq C_1\odot\C_2\odot\dotsb\odot\C_k$. Then $k = 2$ since if $k > 2$ we could take $\cD = \C \cap (\C_2 \odot \dotsb \odot \C_k)$ a proper subclass of $\C$ (by the assumption of minimality) but then $\C \subseteq \C_1 \odot \cD$. So $\C \subseteq \cD \odot \E$ for some proper subclasses $\cD$ and $\E$ of $\C$. Now we can simply choose $\pi \in \C \setminus \cD$ and $\pi' \in \C \setminus \E$.

Conversely, suppose that such $\pi$ and $\pi'$ exist and set $\cD = \C \cap \Av(\pi)$ and $\E = \C \cap \Av(\pi')$. Then $\cD$ and $\E$ are proper subclasses of $\C$ and, by the given condition, $\C \subseteq \cD \odot \E$, hence $\C$ is splittable.
\end{proof}

Now we can provide a supply of unsplittable classes.

\begin{lemma}
\label{lem-subs-closed}
If $\C = \C[\C]$, i.e., $\C$ is closed under inflation, then $\C$ is unsplittable.
\end{lemma}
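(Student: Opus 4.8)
The plan is to use the characterisation of splittability proved just above and argue by contradiction. Suppose, contrary to the claim, that \C is splittable. By that characterisation there exist permutations $\pi,\pi'\in\C$ such that every $\sigma\in\C$ admits a red--blue colouring whose red part avoids $\pi$ and whose blue part avoids $\pi'$. My goal is to produce a single witness $\sigma\in\C$ for which \emph{no} such colouring exists, i.e.\ a $\sigma$ in which every red--blue colouring yields either a red copy of $\pi$ or a blue copy of $\pi'$; this contradicts the choice of $\pi$ and $\pi'$ and hence shows \C is unsplittable.

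The key step is to build this witness by a single inflation, with closure under inflation guaranteeing membership in \C. Writing $b=|\pi'|$ (note $b\ge 1$, since the empty permutation lies in every nonempty class, so $\pi'$ is nonempty), I would take
\[
	\sigma = \pi'[\pi,\pi,\dots,\pi],
\]
the inflation of $\pi'$ in which each of its $b$ points is replaced by a copy of $\pi$. Because $\pi,\pi'\in\C$ and $\C=\C[\C]$, we indeed have $\sigma\in\C$, so $\sigma$ is a legitimate test permutation.

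Next I would analyse an arbitrary red--blue colouring of $\sigma$ via the $b$ intervals $B_1,\dots,B_b$ that are the inflated copies of $\pi$, indexed so that $B_j$ sits in the cell corresponding to the $j$-th point of $\pi'$. Either the red elements of some block $B_j$ already contain the pattern $\pi$ --- in which case the red part of $\sigma$ contains $\pi$ and we are done --- or no block has this property. In the latter case each $B_j$ is a full copy of $\pi$ whose red elements do not realise $\pi$, so $B_j$ cannot be entirely red and therefore contains at least one blue point. Choosing one blue point from each block yields $b$ blue points, exactly one per block; since the blocks are arranged in the pattern of $\pi'$, these chosen points have, by position and by value, the same relative order as the points of $\pi'$, giving a blue copy of $\pi'$. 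In either case the colouring fails to avoid $\pi$ in red and $\pi'$ in blue simultaneously, the desired contradiction.

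I do not anticipate a serious obstacle: the whole content is in selecting the right witness, after which the verification is a one-step pigeonhole across the blocks rather than an iterated Ramsey-type construction. The one point that needs care is the dichotomy itself --- it must be phrased as ``the red elements \emph{within} some block contain the pattern $\pi$'' versus ``every block contains a blue point,'' rather than in terms of blocks being monochromatic, so that the two cases are genuinely exhaustive. It is also worth stating explicitly that selecting one point from each block preserves the $\pi'$-pattern, which is immediate from the definition of inflation.
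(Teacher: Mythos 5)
Your proof is correct and follows essentially the same route as the paper's: a single inflation witness whose every red--blue colouring is forced, by a block-by-block dichotomy, to contain a red $\pi$ or a blue $\pi'$. The only (immaterial) difference is that you inflate $\pi'$ by copies of $\pi$ while the paper takes $\pi[\pi',\dots,\pi']$, i.e.\ the same argument with the roles of the two patterns and colours interchanged.
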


\begin{proof}
Let $\pi$ and $\pi'$ be any two permutations in $\C$ and consider $\tau = \pi[\pi',\dots,\pi']$. Any red-blue colouring of $\tau$ contains either an entirely blue interval of pattern $\pi'$ or a red point in each such interval, and hence its red elements contain $\pi$. So $\C$ cannot be splittable.
\end{proof} 

\begin{lemma}
\label{lem-infl} 
If $\X$ and $\Y$ are unsplittable classes, then so is~$\X[\Y]$.
\end{lemma}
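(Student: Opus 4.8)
The plan is to use the characterisation of splittability proved above: a class is \emph{unsplittable} precisely when, for every pair $\pi,\pi'\in\X[\Y]$, there is a single witness $w\in\X[\Y]$ such that every red-blue colouring of $w$ has its red part containing $\pi$ or its blue part containing $\pi'$. So I fix arbitrary $\pi,\pi'\in\X[\Y]$ and aim to build such a $w$. My first step is a reduction to a clean ``product'' form. Since unsplittable classes are atomic, and any finite subset of an atomic class has a common extension inside the class (by iterating Fra\"{i}ss\'e's characterisation), I can find $A\in\X$ and $B\in\Y$ with the property that $\pi,\pi'\le A[B,\dots,B]$, where $A[B,\dots,B]$ denotes $A$ inflated by $|A|$ copies of $B$. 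Indeed, writing $\pi=\xi[\eta_1,\dots,\eta_k]$ with $\xi\in\X$ and each $\eta_i\in\Y$, atomicity of $\Y$ gives a common extension $\eta\ge\eta_i$, so $\pi\le\xi[\eta,\dots,\eta]$; doing the same for $\pi'$ and then taking $A\ge\xi,\xi'$ in $\X$ and $B\ge\eta,\eta'$ in $\Y$, the monotonicity of inflation in both arguments yields $\pi,\pi'\le A[B,\dots,B]=:P$. Hence it suffices to produce a $w\in\X[\Y]$ that defeats the pair $(P,P)$: if every colouring of $w$ forces a red $P$ or a blue $P$, it certainly forces a red $\pi$ or a blue $\pi'$.

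The witness is built by composing witnesses from the two factors. Because $\X$ is unsplittable, applying the characterisation to the pair $(A,A)$ gives $w_\X\in\X$ such that every red-blue colouring of $w_\X$ contains a red $A$ or a blue $A$; symmetrically, unsplittability of $\Y$ applied to $(B,B)$ gives $w_\Y\in\Y$ doing the same with $B$. I then set $w:=w_\X[w_\Y,\dots,w_\Y]$ (with $|w_\X|$ copies of $w_\Y$), which lies in $\X[\Y]$. The core is a two-level colouring argument. Given any colouring of $w$, each of the $|w_\X|$ inflated blocks is a coloured copy of $w_\Y$, so by the choice of $w_\Y$ each block contains a red $B$ or a blue $B$. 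This induces a colouring of the leaves of $w_\X$: colour a block \emph{red} if it contains a red $B$ and \emph{blue} otherwise (so a blue leaf contains a blue $B$). By the choice of $w_\X$, this induced colouring contains a red $A$ or a blue $A$; say the former, on blocks in positions forming the pattern $A$. Picking a red $B$ inside each of these blocks, the cross-block relationships are exactly those of $A$ (order is preserved by inflation), so the chosen points assemble into a red copy of $A[B,\dots,B]=P$. The blue case is symmetric, and this establishes that $w$ defeats $(P,P)$, completing the argument.

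The step I expect to be the main obstacle is verifying that the assembled monochromatic configuration really is a copy of $P$, i.e.\ that the red $A$ at the level of blocks combined with the red $B$'s inside the individual blocks glues together into a genuine red $A[B,\dots,B]$. This is exactly where the order-preserving structure of inflation must be invoked carefully: one needs that selecting one sub-block per chosen leaf reproduces the macroscopic pattern $A$, while selecting a full copy of $B$ within each chosen block reproduces the microscopic pattern, and that these two do not interfere. The reduction paragraph contains a parallel subtlety, namely that $\xi\le A$ and $\eta\le B$ imply $\xi[\eta,\dots,\eta]\le A[B,\dots,B]$, which again rests on monotonicity of inflation in both coordinates; I would isolate this monotonicity statement (if not already available) before running the colouring argument.
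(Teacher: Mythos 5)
Your proof is correct and takes essentially the same approach as the paper: the paper likewise builds the witness $\sigma^+[\tau^+,\dotsc,\tau^+]$ from unsplittability witnesses for the two factors and runs the identical two-level induced-colouring argument to assemble a monochromatic inflated pattern. The only cosmetic difference is that you also invoke atomicity of $\X$ to merge the two target patterns into a single $P=A[B,\dotsc,B]$ and apply unsplittability to the diagonal pairs $(A,A)$ and $(B,B)$, whereas the paper keeps $\sigma[\tau,\dotsc,\tau]$ and $\sigma'[\tau',\dotsc,\tau']$ separate and applies unsplittability of $\X$ to $(\sigma,\sigma')$ and of $\Y$ to $(\tau,\tau')$.
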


\begin{proof}
Suppose that $\X[\Y]$ is splittable. That means there are permutations $\pi,\pi'\in \X[\Y]$ such that each element of $\X[\Y]$ has a red-blue coloring with no red $\pi$ and no blue~$\pi'$. 

Since $\pi$ is in $\X[\Y]$, it can be expressed as $\pi=\sigma[\tau_1,\dotsc,\tau_k]$ for some $\sigma\in \X$ and $\tau_1,\dotsc,\tau_k\in \Y$. Since $\Y$ is unsplittable, and therefore atomic, there is a permutation $\tau \in \Y$ which contains all the $\tau_1,\dotsc,\tau_k$. We may thus actually assume that $\pi=\sigma[\tau,\tau,\dotsc,\tau]$. Similarly, we have $\pi'=\sigma'[\tau',\tau',\dotsc,\tau']$ for some $\sigma'\in \X $ and $\tau'\in \Y$.

Since $\X$ is unsplittable, it contains a permutation $\sigma^+$ whose every red-blue coloring has a red copy of $\sigma$ or a blue copy of $\sigma'$. Likewise, $\Y$ has a permutation $\tau^+$ containing a red $\tau$ or a blue $\tau'$ in any red-blue coloring. 

Consider the permutation $\pi^+=\sigma^+[\tau^+,\tau^+,\dotsc,\tau^+]\in \X[\Y]$. We claim that any red-blue coloring of $\pi^+$ has a red $\pi$ or a blue~$\pi'$. Fix a red-blue coloring $c$ of~$\pi^+$. Use the coloring $c$ to define a red-blue coloring $d$ of $\sigma^+$ as follows: an element $\sigma^+_i$ is red in $d$ if the $\tau^+$-copy in $\pi^+$ obtained by inflating $\sigma^+_i$ has a red copy of $\tau$ in the coloring $c$; similarly, $\sigma^+_i$ is blue in $d$ if the corresponding $\tau^+$-copy has a blue copy of~$\tau'$. The coloring $d$ of $\sigma^+$ has either a red copy of $\sigma$ or a blue copy of $\sigma'$. In the first case, we find a red copy of $\sigma[\tau,\dotsc,\tau]=\pi$ in $\pi^+$, while in the other case we find a blue~$\pi'$.
\end{proof}

\begin{observation}\label{obs-inf}
Let $\X$ be a permutation class, and let $\Y$ be a class which is $\{\Y_1,\dotsc,\Y_k\}$-splittable for some $\Y_1,\dotsc,\Y_k$. Then $\X[\Y]$ is $\{\X[\Y_1],\dotsc,\X[\Y_k]\}$-splittable, and $\Y[\X]$ is $\{\Y_1[\X],\dotsc,\Y_k[\X]\}$-splittable.
\end{observation}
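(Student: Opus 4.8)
The plan is to show that inflation distributes over the merge operation, and then apply this to a splitting of $\Y$. By hypothesis there is a finite sequence $(\cD_j)_{j=1}^m$, each $\cD_j$ drawn from $\{\Y_1,\dotsc,\Y_k\}$, with $\Y\subseteq \odot_{j=1}^m \cD_j$. The two required statements will follow from the inclusions
\[
\X\bigl[{\textstyle\odot_{j=1}^m \cD_j}\bigr]\subseteq \odot_{j=1}^m \X[\cD_j]
\quad\text{and}\quad
\bigl({\textstyle\odot_{j=1}^m \cD_j}\bigr)[\X]\subseteq \odot_{j=1}^m \cD_j[\X],
\]
together with the monotonicity of inflation (which is immediate from the definition and gives $\X[\Y]\subseteq\X[\odot_j\cD_j]$ and $\Y[\X]\subseteq(\odot_j\cD_j)[\X]$) and the observation that each $\X[\cD_j]$ lies in $\{\X[\Y_1],\dotsc,\X[\Y_k]\}$ while each $\cD_j[\X]$ lies in $\{\Y_1[\X],\dotsc,\Y_k[\X]\}$.

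For the first inclusion I would transfer colourings through the blocks of an inflation. Take $\pi\in\X[\odot_j\cD_j]$ and write $\pi=\sigma[\tau_1,\dotsc,\tau_n]$ with $\sigma\in\X$ and each $\tau_i\in\odot_j\cD_j$. Each $\tau_i$ then carries an $m$-colouring whose colour-$j$ part lies in $\cD_j$; colour $\pi$ by applying this colouring separately inside each interval $\tau_i$. The colour-$j$ elements of $\pi$ then form an inflation of the pattern $\sigma^{(j)}$ of $\sigma$ restricted to the positions whose block has a non-empty colour-$j$ part, each surviving block being inflated by the colour-$j$ part of the corresponding $\tau_i$, which lies in $\cD_j$. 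Hence the colour-$j$ part of $\pi$ lies in $\X[\cD_j]$, so $\pi\in\odot_j\X[\cD_j]$.

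The second inclusion is dual and uses the opposite strategy: colour whole blocks according to a colouring of the outer permutation. Given $\pi=\tau[\sigma_1,\dotsc,\sigma_n]\in(\odot_j\cD_j)[\X]$ with $\tau\in\odot_j\cD_j$ and each $\sigma_i\in\X$, fix an $m$-colouring of $\tau$ with colour-$j$ part in $\cD_j$ and give every element of the block $\sigma_i$ the colour assigned to $\tau_i$. Then the colour-$j$ part of $\pi$ is the colour-$j$ part of $\tau$ (a permutation in $\cD_j$) inflated by those $\sigma_i\in\X$ whose index received colour $j$, and so lies in $\cD_j[\X]$. This yields $\pi\in\odot_j\cD_j[\X]$.

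The one place demanding care — and the nearest thing to a genuine obstacle — is the bookkeeping around empty blocks in the first inclusion. When the colour-$j$ part of some $\tau_i$ is empty, the $i$-th block contributes nothing to the colour-$j$ subpermutation, so the outer shape is not $\sigma$ but its restriction $\sigma^{(j)}$ to the surviving positions. The argument goes through precisely because $\X$ is a permutation class: $\sigma^{(j)}\leq\sigma\in\X$ forces $\sigma^{(j)}\in\X$. Once this downward-closure point is noted, the remaining verifications — that patterns of blocks compose correctly under inflation, and that the $m$ colour classes together recover $\pi$ — are routine.
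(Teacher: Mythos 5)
Your proof is correct: both inclusions $\X[\odot_j\cD_j]\subseteq\odot_j\X[\cD_j]$ and $(\odot_j\cD_j)[\X]\subseteq\odot_j\cD_j[\X]$ hold, and your handling of empty colour classes via downward closure (replacing $\sigma$ by its restriction $\sigma^{(j)}\in\X$) is precisely the detail that makes the first one go through. The paper states this Observation without any proof, treating it as routine; your argument supplies the intended details and is in the same spirit as the paper's proof of Lemma~\ref{lem-infl}, which likewise transfers red--blue colourings between an inflated permutation and its blocks.
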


We need two results from \cite{Jelinek2015Splittings-and-}, whose proofs we omit. 

\begin{lemma}[{\cite[Proposition 3.2]{Jelinek2015Splittings-and-}}]  
\label{lem-abc} 
For any three permutations $\alpha,\beta,\gamma$, 
\[
\Av(\alpha \oplus \beta \oplus \gamma) \subseteq \Av(\alpha \oplus \beta) \odot \Av(\beta \oplus \gamma).
\]
\end{lemma}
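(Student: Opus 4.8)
The plan is to unwind the definitions of $\odot$ and of splittability: by the merge definition it suffices to show that every $\mu\in\Av(\alpha\oplus\beta\oplus\gamma)$ admits a red--blue colouring of its points whose red part avoids $\alpha\oplus\beta$ and whose blue part avoids $\beta\oplus\gamma$. For a point $p$ of $\mu$ I write $L^-(p)$ for the set of points of $\mu$ lying strictly below and strictly to the left of $p$, and $U^+(p)$ for those strictly above and to the right; since positions and values are all distinct, $L^-(p)$ is exactly the open lower-left quadrant at $p$ and $U^+(p)$ the open upper-right quadrant.

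The first thing I would try is a one-sided colouring: colour $p$ \emph{red} if $\alpha\not\le\mu|_{L^-(p)}$ (no copy of $\alpha$ sits strictly below-left of $p$) and \emph{blue} otherwise. The red part then avoids $\alpha\oplus\beta$ for a clean reason: in any copy of $\alpha\oplus\beta$ the whole $\alpha$-block lies strictly below-left of \emph{every} point of the $\beta$-block, so each point of the $\beta$-block sees $\alpha$ in its own lower-left quadrant and is therefore blue. Dually, colouring $p$ blue exactly when $\gamma\not\le\mu|_{U^+(p)}$ forces the blue part to avoid $\beta\oplus\gamma$. The difficulty is that these are two different colourings, and neither single-point rule controls the opposite colour class.

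The main obstacle is precisely this asymmetry, which is a genuine ``missing corner'' phenomenon. Trying to show that the blue class of the first colouring avoids $\beta\oplus\gamma$, I am handed a blue copy $D_\beta\sqcup D_\gamma$ of $\beta\oplus\gamma$ and I want a \emph{single} copy of $\alpha$ lying strictly below-left of \emph{all} of $D_\beta$, so as to build $\alpha\oplus\beta\oplus\gamma$ inside $\mu$. Each point of $D_\beta$ carries \emph{some} $\alpha$ below-left of it, but because $\beta$ need not have a bottom-left corner, these witnesses need not combine into one $\alpha$ below the whole block. In fact the naive rule genuinely fails: for $\alpha=\gamma=1$ and $\beta=21$ the permutation $34125$ avoids $\alpha\oplus\beta\oplus\gamma=1324$, yet its non-(left-to-right minima) are the values $4,2,5$, a copy of $213=\beta\oplus\gamma$, all blue. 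So a correct argument must coordinate the two sides across whole $\beta$-blocks.

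To overcome this I would pass to a rank function, in the spirit of the classical proof that $\Av(123)\subseteq\Av(12)\odot\Av(12)$ (where the rank of a point is the length of the longest increasing run ending at it). For each point $p$ let $A(p)$ be the largest $m\ge0$ with $\alpha\oplus\beta^{\oplus m}\le\mu|_{L^-(p)}$, where $\beta^{\oplus m}$ denotes the sum of $m$ copies of $\beta$ (set $A(p)=-1$ if even $\alpha\not\le\mu|_{L^-(p)}$), and symmetrically let $C(p)$ count copies of $\beta$ stackable below a $\gamma$ inside $U^+(p)$. The avoidance hypothesis then buys the inequality $A(p)+C(p)\le0$ whenever both are non-negative: an $\alpha\oplus\beta^{\oplus A(p)}$ in $L^-(p)$ and a $\beta^{\oplus C(p)}\oplus\gamma$ in $U^+(p)$ occupy disjoint quadrants with every point of the first strictly below-left of every point of the second, so they concatenate to $\alpha\oplus\beta^{\oplus(A(p)+C(p))}\oplus\gamma\le\mu$, which contains the forbidden pattern the moment $A(p)+C(p)\ge1$. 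I would colour by comparing $A(p)$ and $C(p)$, exploiting that both ranks are monotone along the dominance order to promote the per-point $\alpha$- and $\gamma$-witnesses of a $\beta$-block into a single block-wide witness. The step I expect to be hardest — the heart of the proof — is verifying that the two resulting classes really avoid $\alpha\oplus\beta$ and $\beta\oplus\gamma$; this is exactly where the corner difficulty must be dissolved, using monotonicity of $A$ and $C$ to align the scattered witnesses. The symmetries recorded earlier in the paper let me assume, if convenient, that $\beta$ is sum-indecomposable, which may lighten the bookkeeping without removing the core issue.
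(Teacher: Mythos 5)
You have not given a proof, and the gap is not a routine verification that can be filled in later: it is the entire content of the lemma, and the mechanism you propose for it provably cannot work. What you do have is correct and worthwhile: the counterexample $34125$ showing the naive one-sided colouring fails is right, the inequality $A(p)+C(p)\le 0$ (when both are non-negative) is right, and the ``missing corner'' diagnosis is exactly the right way to see the difficulty. But you never specify the colouring rule (``I would colour by comparing $A(p)$ and $C(p)$''), and you explicitly defer the verification that the two colour classes avoid $\alpha\oplus\beta$ and $\beta\oplus\gamma$. Note also that the paper itself offers no proof to fall back on: it imports this statement as Proposition~3.2 of Jel\'inek--Valtr and says so explicitly, so a proposal must be self-contained.

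The deferred step cannot be completed along the lines you describe: \emph{no} colouring rule that is a function of the pair $(A(p),C(p))$ can succeed. Take $\alpha=\gamma=1$, $\beta=21$ (forbidden pattern $1324$) and consider
\[
\mu^*=4613752\ominus 6315724 = 11\,13\,8\,10\,14\,12\,9\,6\,3\,1\,5\,7\,2\,4,
\]
which avoids $1324$ (each block does, and $1324$ is skew-indecomposable, so no copy can straddle the two blocks). Every point of $\mu^*$ has signature $(A,C)\in\{(-1,0),(0,0),(0,-1)\}$; let $u,v,w$ be the colours a signature rule assigns to these three signatures. Red must avoid $132=\alpha\oplus\beta$, so every $132$-copy needs a blue point; blue must avoid $213=\beta\oplus\gamma$, so every $213$-copy needs a red point. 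In the upper block, the $132$-copies with values $(11,14,12)$ and $(10,14,12)$ and the $213$-copies with values $(11,8,14)$ and $(13,10,14)$ have signature multisets $\{(-1,0),(0,-1),(0,-1)\}$, $\{(0,0),(0,-1),(0,-1)\}$, $\{(-1,0),(-1,0),(0,-1)\}$, $\{(0,0),(0,0),(0,-1)\}$ respectively; the first and third force $u\ne w$, the second and fourth force $v\ne w$, hence $u=v$. In the lower block, the $132$-copy $(1,5,2)$ and the $213$-copy $(3,1,5)$ have signatures $\{(-1,0),(0,0),(0,0)\}$ and $\{(-1,0),(-1,0),(0,0)\}$, forcing $u\ne v$ --- a contradiction. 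The conceptual reason is the one your own plan leans on incorrectly: the points of a $\beta$-copy that need a \emph{common} $\alpha$-witness are its left-to-right minima, and these form an antichain in the dominance order, so monotonicity of $A$ and $C$ ``along the dominance order'' gives no relation at all among them. (A smaller inaccuracy: symmetry does not let you assume $\beta$ is sum-indecomposable; what does reduce to that case is an induction via $\alpha\oplus(\beta_1\oplus\beta_2)\oplus\gamma=(\alpha\oplus\beta_1)\oplus\beta_2\oplus\gamma$, using the lemma itself for shorter $\beta$.) Any correct proof must coordinate witnesses across dominance-incomparable points by some device other than pointwise quadrant ranks --- which is precisely what the argument cited from Jel\'inek--Valtr supplies and your outline does not.
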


\begin{lemma}[{\cite[Theorem 3.15]{Jelinek2015Splittings-and-}}]
\label{lem-1p} 
Let $\pi$ be a  sum-indecomposable permutation. If $\Av(\pi)$ is  $\{\Av(\pi_1), \Av(\pi_2), \dotsc, \Av(\pi_k)\}$-splittable for a set $\{\pi_1,\dotsc,\pi_k\}$ of sum-indecomposable permutations, then  $\Av(1\oplus\pi)$ is $\{\Av(1\oplus\pi_1), \Av(1\oplus\pi_2), \dotsc, \Av(1\oplus\pi_k)\}$-splittable.
\end{lemma}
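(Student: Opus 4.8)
The plan is to induct on $|\sigma|$, proving the equivalent statement that every $\sigma\in\Av(1\oplus\pi)$ admits a $k$-colouring in which the colour class $C_i$ avoids $1\oplus\pi_i$ for each $i$. The engine of the induction will be a structural description of $\Av(1\oplus\pi)$ through the sum decomposition. Write $\sigma=\sigma^{(1)}\oplus\cdots\oplus\sigma^{(t)}$ with each $\sigma^{(r)}$ sum-indecomposable. Since $\pi$ is sum-indecomposable, $\Av(\pi)$ is sum-closed (by Observation~\ref{obs-IandDclosure} and the remarks following Observation~\ref{obs-closure-basis}), and a sum-indecomposable pattern embeds in a direct sum only inside a single summand. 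From these two facts I would first establish the characterisation
\[
\sigma\in\Av(1\oplus\pi)\iff \sigma^{(1)}\in\Av(1\oplus\pi)\text{ and }\sigma^{(r)}\in\Av(\pi)\text{ for all }r\ge 2,
\]
the point being that any copy of $1\oplus\pi$ has its sum-indecomposable part $\pi$ confined to one summand, with the extra minimum either in that same summand (forcing $1\oplus\pi$ there) or in an earlier one (forcing a bare $\pi$ in a non-initial summand).

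Given this, the inductive colouring writes itself when $\sigma$ is sum-decomposable ($t\ge 2$). I would colour each tail summand $\sigma^{(r)}$, $r\ge 2$, by the hypothesised splitting of $\Av(\pi)$, so that its colour-$i$ part lies in $\Av(\pi_i)\subseteq\Av(1\oplus\pi_i)$, and colour the head $\sigma^{(1)}$ by the induction hypothesis. Because colour classes respect the direct sum, $C_i=C_i^{(1)}\oplus\cdots\oplus C_i^{(t)}$, and the same single-block argument — now applied with $\pi_i$ in place of $\pi$, using that $\pi_i$ is sum-indecomposable — shows that this direct sum avoids $1\oplus\pi_i$: a putative copy would confine $\pi_i$ to one block, which is impossible since $C_i^{(1)}$ avoids $1\oplus\pi_i$ and every later $C_i^{(r)}$ avoids $\pi_i$. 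Dually, if $\sigma$ is skew-decomposable I would split it into its skew-indecomposable summands; here the situation is even cleaner, because $1\oplus\pi$ and each $1\oplus\pi_i$ are sum-decomposable, hence skew-indecomposable, so $\sigma$ avoids $1\oplus\pi$ exactly when each skew summand does, and reassembling the recursively obtained colourings is immediate.

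These two reductions strictly decrease the size except when $\sigma$ is simultaneously sum- and skew-indecomposable, and this doubly-indecomposable core is where the real work lies. For such $\sigma$ I would peel off the leftmost point $p$, which by indecomposability has both points below it and points above it to its right. The points above $p$ form the up-right region $B$, which lies in $\Av(\pi)$ and can therefore be split directly, while the points $A$ below $p$ form a strictly smaller member of $\Av(1\oplus\pi)$ to be handled recursively.

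The main obstacle is the coordination of these two colourings. For a point $c\in A$, the region of points above and to the right of $c$ reaches into $B$, and within that region the part coming from $A$ lies below the part coming from $B$ but is interleaved with it in position; thus a threatened copy of $1\oplus\pi_i$ inside $C_i$ is separated only by a horizontal line, not by a genuine sum-decomposition, and the sum-indecomposability of $\pi_i$ no longer rules it out for free. To overcome this I would strengthen the induction hypothesis to produce a \emph{coherent} colouring — one in which \emph{every} up-right region, and not merely those rooted at same-coloured points, is split so that its colour-$i$ part avoids $\pi_i$ — carrying the already-split region $B$ as fixed context into the recursion on $A$. The delicate point, and the crux of the whole argument, is that the maximal up-right regions are exactly those rooted at the left-to-right minima of $\sigma$, and these overlap along the minima staircase; the strengthened hypothesis must be maintained compatibly across those overlaps as successive minima are exposed. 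Verifying that a coherent colouring survives this peeling is the step I expect to require the most care.
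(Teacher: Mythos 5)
First, a point of comparison: the paper does not prove this lemma at all --- it is quoted from \cite[Theorem 3.15]{Jelinek2015Splittings-and-} with the proof explicitly omitted --- so your argument has to stand on its own. Its first two-thirds do stand: the characterisation of membership in $\Av(1\oplus\pi)$ in terms of the sum decomposition is correct, and the resulting reductions when $\sigma$ is sum-decomposable or skew-decomposable are sound. But essentially all of the content of the lemma sits in the remaining, doubly indecomposable case, and there you do not give a proof; you yourself defer the key verification (``the step I expect to require the most care''), and the specific mechanism you propose is unsound. Your plan is to colour $B$ (the up-right region of the leftmost point) by the splitting of $\Av(\pi)$ supplied by the hypothesis, then recurse on $A$ carrying $B$'s colouring as \emph{fixed} context while maintaining the ``coherent colouring'' invariant that every up-right region meets every colour class $i$ in a $\pi_i$-avoider. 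The hypothesis, however, is purely existential: it hands you \emph{some} splitting of $B$, with no control over which one, and an adversarial but perfectly legal choice already destroys the invariant.

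Concretely, take $\pi=321$ with the splitting $\Av(321)\subseteq\Av(21)\odot\Av(21)$ (every $321$-avoider is a union of two increasing sequences), so both parts avoid $\pi_1=21$, and let $\sigma=31452\in\Av(1432)=\Av(1\oplus\pi)$. Here $p$ is the value $3$, $B=\{4,5\}$ and $A=\{1,2\}$. A legitimate output of the splitting hypothesis applied to $B$ puts $4$ and $5$ into \emph{different} colour classes. With that context fixed, no colouring of $A$ is coherent: the up-right region of the point $1$ is $\{4,5,2\}$, and whichever class receives the point $2$ meets that region in a descent ($\{4,2\}$ or $\{5,2\}$), i.e.\ contains $21=\pi_1$. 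A coherent colouring of $\sigma$ does exist, namely $\{3,4,5\}$ and $\{1,2\}$, but only because $4$ and $5$ are coloured alike --- information that cannot be extracted by applying the black-box hypothesis to $B$ in isolation. (The colouring one actually needs, classes avoiding $132$, forces $1$ and $2$ into different classes here, so the recursion on $A$ cannot be context-oblivious either.) Thus the colourings of the overlapping up-right regions rooted at successive left-to-right minima must be constructed \emph{jointly}: your weak invariant does not push through the horizontal interleaving (as you observe), and your strong invariant is unattainable against a fixed context, so the induction does not close either way. That joint construction is precisely the substance of the cited Theorem 3.15, and it is what is missing. A smaller remark: splittability permits repeated classes in the merge, so you may not assume a splitting of $\Av(\pi)$ into exactly $k$ parts with part $i$ avoiding $\pi_i$, nor should you insist that the target splitting of $\Av(1\oplus\pi)$ use the same number of parts; a correct proof may need that slack.
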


These results allow us to show that, when avoiding separable permutations, in order to obtain an unsplittable class all the basis elements must be slim.

\begin{lemma}
\label{lem-slim}
Let $\pi$ be a separable sum-decomposable permutation of depth~$d$. Then $\Av(\pi)$ is splittable over the set of classes $\Av(\sigma)$ where $\sigma$ ranges over the slim sum-decomposable subpermutations of $\pi$. \end{lemma}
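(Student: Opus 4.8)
The plan is to induct on the depth $d$ of $\pi$, proving simultaneously the stated claim and its mirror image for skew-decomposable permutations (splittability over slim \emph{skew}-decomposable subpermutations). These two statements are interchanged by the complement symmetry, which preserves separability, slimness and the subpermutation order and commutes with both $\Av$ and $\odot$; so at depth $d$ it suffices to treat the sum-decomposable case, while the inductive hypothesis supplies the skew-decomposable version at all smaller depths. Throughout I use that splittability composes: if $\C\subseteq\odot_j\cD_j$ and each $\cD_j\subseteq\odot_l\E_{j,l}$ then $\C\subseteq\odot_{j,l}\E_{j,l}$ (by associativity and monotonicity of $\odot$), and that a slim subpermutation of a subpermutation of $\pi$ is a slim subpermutation of $\pi$.

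First I reduce to two sum-components. Writing $\pi=\alpha_1\oplus\cdots\oplus\alpha_m$ with each $\alpha_i$ sum-indecomposable (so $m\ge 2$), a routine induction on $m$ using Lemma~\ref{lem-abc} (peeling off the last block, i.e.\ applying it with middle block $\alpha_{m-1}$) gives $\Av(\pi)\subseteq\odot_{i=1}^{m-1}\Av(\alpha_i\oplus\alpha_{i+1})$. Each $\alpha_i\oplus\alpha_{i+1}$ is a subpermutation of $\pi$, so by the composition remark it is enough to split every two-component class $\Av(\alpha\oplus\beta)$ over the slim sum-decomposable subpermutations of $\alpha\oplus\beta$. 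I treat this in cases. If $\alpha=\beta=1$ then $\alpha\oplus\beta=12$ is already slim. If exactly one component is a singleton, say $\alpha\oplus1$ with $\alpha$ skew-decomposable of depth $\le d-1$, then the inductive hypothesis (skew version) splits $\Av(\alpha)$ over classes $\Av(\sigma)$ with $\sigma$ a slim skew-decomposable—hence sum-indecomposable—subpermutation of $\alpha$; feeding this into the append version of Lemma~\ref{lem-1p} (obtained from the stated prepend version by the rotation-by-$180^\circ$ symmetry, which satisfies $rc(1\oplus\mu)=rc(\mu)\oplus1$) splits $\Av(\alpha\oplus1)$ over the classes $\Av(\sigma\oplus1)$, and each $\sigma\oplus1$ is a slim sum-decomposable subpermutation of $\alpha\oplus1$. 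The symmetric subcase $1\oplus\beta$ uses the prepend version directly.

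The main obstacle is the remaining case, where both $\alpha$ and $\beta$ are skew-decomposable: then $\alpha\oplus\beta$ has only two sum-components, so Lemma~\ref{lem-abc} no longer applies at the root, while neither component is a leaf that Lemma~\ref{lem-1p} could peel off. I plan to resolve it by inserting an artificial point. Since $\alpha\oplus\beta\le\alpha\oplus1\oplus\beta$ we have $\Av(\alpha\oplus\beta)\subseteq\Av(\alpha\oplus1\oplus\beta)$, and Lemma~\ref{lem-abc} with middle block equal to $1$ gives $\Av(\alpha\oplus1\oplus\beta)\subseteq\Av(\alpha\oplus1)\odot\Av(1\oplus\beta)$. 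Hence $\Av(\alpha\oplus\beta)\subseteq\Av(\alpha\oplus1)\odot\Av(1\oplus\beta)$, and each factor is an instance of the one-singleton case already handled; since $\alpha\oplus1,\,1\oplus\beta\le\alpha\oplus\beta$, the slim subpermutations produced there are slim subpermutations of $\pi$.

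All recursive appeals are to the skew-decomposable version at depth $\le d-1$ (applied to $\alpha$ or $\beta$, each of depth at most $d-1$ as a subtree below the root of $\pi$), so there is no circularity. The base case $d=1$, where $\pi$ is increasing, needs no separate argument: there every $\alpha_i=1$, and the two-component reduction already collapses to $\odot\,\Av(12)$ with $12$ slim. This closes the induction and yields the lemma.
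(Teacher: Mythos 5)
Your proof is correct and takes essentially the same route as the paper's: both argue by induction on depth (with the skew-decomposable version supplied by symmetry), use Lemma~\ref{lem-abc} to insert singleton blocks and thereby reduce $\Av(\pi)$ to a merge of classes of the form $\Av(\alpha_i\oplus 1)$ and $\Av(1\oplus\alpha_{i+1})$, and then combine the inductive hypothesis with Lemma~\ref{lem-1p} (and its reverse-complement variant) to replace each such factor by classes $\Av(\tau\oplus 1)$, $\Av(1\oplus\tau)$ with $\tau$ slim. The only difference is organizational: the paper inserts all the $1$'s at once into $\sigma_1\oplus 1\oplus\sigma_2\oplus\dotsb\oplus 1\oplus\sigma_m$ and peels factors off repeatedly, whereas you first reduce to consecutive pairs $\Av(\alpha_i\oplus\alpha_{i+1})$ and then split each pair by cases, arriving at the same list of factors.
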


\begin{proof}
Proceed by induction on $d$. The case $d=0$ is trivial, as the only permutation of depth 0 is the singleton permutation. Suppose $\pi$ has depth  $d \ge 1$. We may write $\pi$ as
\[
\pi=\sigma_1\oplus\sigma_2\oplus\dotsb\oplus\sigma_m
\]
where the $\sigma_i$ are sum-indecomposable permutations of depth at most~$d-1$.  Applying Lemma~\ref{lem-abc} repeatedly, we get
\begin{align*}
\Av(\pi)&=\Av(\sigma_1\oplus\sigma_2\oplus\dotsb\oplus\sigma_m)\\
&\subseteq\Av(\sigma_1\oplus1\oplus\sigma_2\oplus1\oplus\dotsb\oplus1\oplus
\sigma_m)\\
&\in 
\FSplit(\Av(\sigma_1\oplus1), \Av(1\oplus\sigma_2), \Av(\sigma_2\oplus1), 
\dotsc,
\Av(1\oplus\sigma_m)).
\end{align*}

Choose any $\sigma\in\{\sigma_1,\dotsc,\sigma_m\}$. By induction, we have
\[
\Av(\sigma)\in\FSplit(\Av(\tau_1),\dotsc,\Av(\tau_\ell)), 
\]
where $\tau_1,\dotsc, \tau_\ell$ are the slim skew-decomposable subpermutations  of~$\sigma$. By Lemma~\ref{lem-1p}, we then get
\begin{align*} 
\Av(1\oplus\sigma)&\in\FSplit(\Av(1\oplus\tau_1),\dotsc,\Av(1\oplus\tau_\ell)),
\quad\text{and}\\
\Av(\sigma\oplus1)&\in\FSplit(\Av(\tau_1\oplus1),\dotsc,\Av(\tau_\ell\oplus1)).
\end{align*}
Note that the permutations $1\oplus\tau_i$ are slim subpermutations of $\pi$,  except perhaps when $\sigma=\sigma_1$, in which case we do not need to split $\Av(1\oplus\sigma)$. An analogous claim holds for $\tau_i\oplus 1$. This completes the proof.
\end{proof}

Of course an analogous statement to Lemma \ref{lem-slim} holds for skew-decomposables as well, by symmetry.

\section{Representable subclasses of \Sep}

Our goal is to describe all the unsplittable subclasses of $\Sep$. We will show that they are precisely the classes obtained by iterated inflations of $\I$, $\D$, $\bac$, $\cab$, $\acb$, and $\bca$ (recall that $\bac$ denotes the class $\Av(213)$, with $\cab, \acb$ and $\bca$ its corresponding symmetries).

We will use $\X\Y$ as a shorthand for $\X[\Y]$. Note that $\X[\uni]=\uni[\X]=\X$ for any class~$\X$. Note also that $\X(\Y\Z)=(\X\Y)\Z$ for any three classes $\X$, $\Y$ and $\Z$. Thus, permutation classes form a monoid with respect to the inflation operation, and $\uni$ is its unit element. By Lemma \ref{lem-infl} the unsplittable proper subclasses of $\Sep$ form a submonoid of this monoid, which will be denoted by~$\cUs$. 

Consider now the alphabet $\cA=\{\I, \D, \bac, \cab, \acb, \bca\}$. Let $\cA^*$ be the set of words over~$\cA$. Each word $w\in\cA^*$ can be seen as a description of an iterated inflation of a sequence of permutation classes, and in particular, each word represents a permutation class from~$\cUs$. Thus, for example, the word $\I\D$ represents the class $\La$ of layered permutations. We take the empty word $\lambda\in\cA^*$ to represent the class~\uni. The subclasses of \Sep that can be represented in this way will be called \emph{representable}, and again by Lemma \ref{lem-infl} form a submonoid of \cUs which we will denote by \Rep. Before proceeding to the main result in the next section (that $\cUs = \Rep$) we will investigate the structure of, and relations between, representable classes more closely.

The interpretation of a word $w \in \cA^*$ as an element of \Rep defines a monoid homomorphism from $\cA^*$ to \Rep. We will frequently blur the distinction between $\cA^*$ and \Rep as the appropriate interpretation will generally be clear by context. When the distinction needs to be made explicit, we shall write $\cl(w)$ for the class corresponding to $w \in \cA^*$. The first thing to note is that the map $\cl \colon \cA^* \to \Rep$ is not an isomorphism. Specifically, using Observations \ref{obs-IandDclosure} and \ref{obs-closure-basis} (and symmetry):

\begin{observation}\label{obs-red} The following identities hold in the
monoid $\Rep$:
\begin{align*}
 \I\I&=\I&\I\cab&=\cab & \cab\D&=\cab\\
 \D\D&=\D&\I\bca&=\bca& \bca\D&=\bca\\
 &&\D\acb&=\acb&\acb\I&=\acb\\
&& \D\bac&=\bac & \bac\I&=\bac
\end{align*}
\end{observation}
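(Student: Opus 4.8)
The plan is to observe that all ten identities fall into three orbits under the group of eight symmetries of the square, and then to verify one representative from each orbit using Observations~\ref{obs-IandDclosure} and~\ref{obs-closure-basis}. Two preliminary facts drive the reduction. First, each symmetry commutes with inflation at the level of classes: for reversal, complementation or inversion $s$ we have $(\X[\Y])^s = \X^s[\Y^s]$, since although inversion and reversal may permute the inflated blocks of an individual permutation, at the class level the blocks still range over $\Y^s$ and the quotient over $\X^s$, so the reordering is immaterial. Second, I record the action of the generators on the alphabet $\cA$: reversal and complementation each exchange $\I$ and $\D$, while on the length-three patterns we have, for instance, $312 \mapsto 132$ under complementation, $312 \mapsto 213$ under reversal, and $312 \mapsto 231$ under inversion, with the analogous orbits starting from $132$.

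The first column is the orbit $\{\I\I=\I,\ \D\D=\D\}$, whose two members are exchanged by complementation. Here $\I$ is sum-closed, so $\I = \I[\I]$ by Observation~\ref{obs-IandDclosure}, and $\D\D=\D$ follows by symmetry (or directly, as $\D$ is skew-closed). The second column is the orbit of $\I\cab=\cab$, whose complement, reverse and inverse images are $\D\acb=\acb$, $\D\bac=\bac$ and $\I\bca=\bca$ respectively; I would dispatch the representative by noting that the single basis element $312$ of $\cab=\Av(312)$ is sum-indecomposable (no proper prefix of its positions carries the values $\{1,\dots,j\}$), so the first bullet following Observation~\ref{obs-closure-basis} gives $\cab = \I[\cab]$. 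The third column is the orbit of $\acb\I=\acb$, with complement, reverse and $180^\circ$-rotation images $\cab\D=\cab$, $\bca\D=\bca$ and $\bac\I=\bac$; for this one the basis element $132$ of $\acb=\Av(132)$ has no nontrivial interval lying in $\I$, its only proper nontrivial interval (occupying the last two positions) having pattern $21$. Hence the third bullet following Observation~\ref{obs-closure-basis} gives $\acb = \acb[\I]$.

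Each representative is thus a one-line application of a stated observation, and the remaining seven identities are obtained by transporting these three along their symmetry orbits. I expect the only delicate point to be the bookkeeping of the group action: one must check that the symmetries partition the ten identities into exactly these three orbits (of sizes $2$, $4$ and $4$), and in particular that inversion and reversal send each chosen representative to the intended class. Here it is essential that inflation order is preserved, not reversed, at the class level, which is precisely the content of the first preliminary fact above. Once this is confirmed there is nothing further to compute.
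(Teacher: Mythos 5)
Your proposal is correct and takes essentially the same approach as the paper, which justifies these identities precisely by citing Observations~\ref{obs-IandDclosure} and~\ref{obs-closure-basis} ``and symmetry'' without further detail. Your decomposition into three symmetry orbits (of sizes $2$, $4$, $4$) and the verification of one representative per orbit (sum-closure of $\I$ for $\I\I=\I$; sum-indecomposability of $312$ for $\I\cab=\cab$; plus-irreducibility of $132$ for $\acb\I=\acb$) simply makes explicit the details the paper leaves to the reader.
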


Let $\rel$ denote the congruence on $\cA^*$ generated by these identities. Specifically, \rel is the transitive closure of the relation defined between two words in $\cA^*$ if the second can be obtained from the first by replacing either two consecutive symbols from the left hand side of one of the identities by the one on the right, or vice versa. Considering only the process of replacing pairs of symbols by single ones we can interpret the identities as a family of rewrite rules (such as $\I \bca \to \bca$) whose iterated application will allow us to produce a \emph{reduced word} (one which allows no further applications of such rules) in each \rel-equivalence class. In fact it is easily seen that there is a unique reduced word in each such class since the rewrite rules are \emph{locally confluent}. That is, if we begin with a word $w \in \cA^*$ and apply two different rewrite rules to obtain $w'$ and $w''$, then either $w' = w''$ or there is a rewrite rule that applies to $w'$ and another one that applies to $w''$, both leading to the same word $w'''$. This is obvious if the rules that produced $w'$ and $w''$ share no common character in $w$ (since then in some sense the other application is still available). If they do share a common character then sometimes they produce the same word, as for instance $x \cab \D \acb y \to x \cab \acb y$ either by rewriting $\cab \D \to \cab$ or by rewriting $\D \acb \to \acb$, or the other rule is still available as in 
\[
x \, \I \bca \D \, y \to x \, \bca \D \, y \to x \, \bca  \,y \quad  \mbox{or} \quad  x \, \I \bca \D \, y \to x \, \I \bca \,y \to x \, \bca \, y.
\]

We wish to show that two distinct reduced words from $\cA^*$ represent distinct permutation classes. In fact, we prove a more general result, showing that set inclusion among permutation classes corresponds to a natural order relation on words.

\begin{observation}\label{obs-order} The following inclusions hold among
permutation classes:
\begin{align*}
\I\D&\subseteq \bca & \D\I&\subseteq \acb\\
\I\D&\subseteq \cab & \D\I&\subseteq \bac
\end{align*} 
\end{observation}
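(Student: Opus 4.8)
The plan is to reduce all four inclusions to the single identification $\I\D=\La=\Av(231,312)$, which the paper has already recorded, together with the trivial fact that enlarging the set of forbidden patterns can only shrink an avoidance class.

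First I would dispatch the left-hand column directly. Since $\I\D=\Av(231,312)$ and $\Av(F)\subseteq\Av(F')$ whenever $F'\subseteq F$, we obtain at once $\I\D=\Av(231,312)\subseteq\Av(231)=\bca$ and $\I\D=\Av(231,312)\subseteq\Av(312)=\cab$. Both left-hand inclusions are therefore essentially free once the basis of $\La$ is in hand. Should one prefer to avoid quoting that basis, the same two statements follow from a short direct check: writing a layered permutation as consecutive decreasing blocks arranged in increasing order, two positions carry a descent exactly when they lie in a common block, and carry an ascent exactly when they lie in distinct blocks. A putative $231$ or $312$ contains two descending pairs sharing a common point, which forces all three of its points into one block; but the remaining pair is an ascent, and an ascent demands distinct blocks, a contradiction.

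Second, I would obtain the right-hand column from the left-hand one purely by symmetry, as anticipated by the remark that all results respect the eight symmetries of the square. The relevant symmetry is complementation $c$. Because $c$ interchanges $\oplus$ with $\ominus$ while preserving horizontal positions, it carries an inflation $\I[\D]$ to $\D[\I]$, i.e.\ $c(\I\D)=\D\I$; a quick sanity check is $2143=12[21,21]\in\I\D$, whose complement $3412=21[12,12]$ lies in $\D\I$. Acting on bases, $c(\bca)=c(\Av(231))=\Av(213)=\bac$ and $c(\cab)=c(\Av(312))=\Av(132)=\acb$. Since $c$ is an order-preserving bijection of $\S$, so that $\C\subseteq\cD$ implies $c(\C)\subseteq c(\cD)$, applying $c$ to the two left-hand inclusions yields exactly $\D\I\subseteq\bac$ and $\D\I\subseteq\acb$, the two entries of the right-hand column.

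There is no genuine obstacle here; the only point demanding care is the bookkeeping for how complementation acts on an inflation, ensuring that $\I\D$ is sent to $\D\I$ and not to some other product such as $\I\I$ or $\D\D$. Concretely one checks that $c$ complements the skeleton and each inflating block while leaving the left-to-right order of blocks fixed, so that the roles of $\I$ and $\D$ are swapped consistently (as in the example above). Once that interchange is confirmed, all four inclusions drop out immediately.
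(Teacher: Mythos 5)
Your proof is correct and matches the paper's intent: the paper states this as an unproved observation, implicitly relying on exactly the fact it records earlier, $\La=\Av(231,312)=\I[\D]$ (so that dropping a forbidden pattern gives $\I\D\subseteq\bca$ and $\I\D\subseteq\cab$), together with the square symmetries for the $\D\I$ column. Your complementation bookkeeping, including $c(\sigma[\tau_1,\dotsc,\tau_k])=c(\sigma)[c(\tau_1),\dotsc,c(\tau_k)]$, is exactly the verification the paper leaves to the reader.
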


Let us say that a word $x\in\cA^*$ is \emph{dominated} by a word $y\in\cA^*$ if $x$ can be obtained from $y$ by a (possibly empty) sequence of steps where in each step we either
\begin{itemize}
\setlength{\itemsep}{0pt}
\item erase an arbitrary letter,
\item replace an occurrence of the letter $\bca$ or $\cab$ by two consecutive
letters $\I\D$, or
\item replace an occurrence of the letter $\acb$ or $\bac$ by two consecutive
letters $\D\I$.
\end{itemize}

Domination is a partial order on $\cA^*$. This is clear since it is transitive by definition, and it contains no cycles since each single step either reduces the number of characters from the set $\{\acb, \bac, \bca, \cab\}$ in a word, or leaves that the same but reduces the total length of the word. We will show that the restriction of domination to reduced words corresponds to the inclusion order of representable permutation classes. Before we prove this, we will need several auxiliary claims dealing with the structure of inflations of permutation classes.

\begin{lemma}
\label{lem-l0} 
Let $\X = \cl(x)$ be a permutation class represented by a nonempty reduced word $x$, and let $x_1\in\cA$ be the first symbol of~$x$. Then the class $\X$ is sum-closed if and only if $x_1$ is one of $\I$, $\cab$ or $\bca$, i.e., $x_1$ is itself sum-closed. Symmetrically, $\X$ is skew-closed iff $x_1$ is one of $\D$, $\acb$ or~$\bac$.
\end{lemma}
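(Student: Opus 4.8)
The plan is to write $\X=\cl(x)=X_1[\Y]$, where $X_1=\cl(x_1)$ is the one-letter class of the first symbol and $\Y=\cl(x_2\dotsb x_n)$ is the class of the remaining suffix, and to reduce the whole statement to the closure behaviour of the single letter $X_1$. The engine of the backward (``if'') direction is a general fact I would record once: if $X_1$ is sum-closed then so is $X_1[\Y]$. This is immediate from Observation~\ref{obs-IandDclosure} together with associativity of inflation, since $X_1=\I[X_1]$ gives $X_1[\Y]=(\I[X_1])[\Y]=\I[X_1[\Y]]$, whence $X_1[\Y]$ is sum-closed; the dual holds for skew-closed classes. I would also note that the closure type of each letter is read off from Observation~\ref{obs-closure-basis}: a class is sum-closed iff its basis elements are sum-indecomposable and skew-closed iff they are skew-indecomposable, so $\I,\cab,\bca$ are sum-closed (as $21,231,312$ are sum-indecomposable) and $\D,\acb,\bac$ are skew-closed (as $12,132,213$ are skew-indecomposable). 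Thus if $x_1\in\{\I,\cab,\bca\}$ then $X_1$ is sum-closed and hence so is $\X$, settling this direction.

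For the forward (``only if'') direction I would argue the contrapositive: assume $x_1\in\{\D,\acb,\bac\}$. Each of these letters is skew-closed, so by the dual of the fact above $\X=X_1[\Y]$ is skew-closed. The key auxiliary claim I would then prove is that \emph{a nonempty subclass of $\Sep$ that is both sum-closed and skew-closed must equal $\Sep$}: such a class contains $1$, hence contains $12$ and $21$ by the two closure properties, and then a one-line induction on size (every separable permutation of size at least $2$ is a sum or skew-sum of two smaller separable permutations) shows it contains all of $\Sep$. Since $\X$ is representable it is a proper subclass of $\Sep$ (representable classes lie in $\cUs$), so $\X$ cannot be both sum- and skew-closed; being skew-closed, it is therefore not sum-closed. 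This shows that sum-closedness of $\X$ forces $x_1\in\{\I,\cab,\bca\}$. The symmetric statement about skew-closed classes then follows by the complement symmetry, which swaps $\oplus$ with $\ominus$, interchanges $\I\leftrightarrow\D$, $\acb\leftrightarrow\cab$, $\bac\leftrightarrow\bca$, and maps reduced words to reduced words.

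The step I expect to be the crux is the forward direction, and more precisely the input that $\X\neq\Sep$: it is exactly properness that, combined with skew-closedness, forces non-sum-closedness, so the whole argument hinges on knowing $\X$ is a proper subclass. If one wished to avoid leaning on properness, one can produce an explicit witness in the cleanest case $x_1=\D$, where reducedness forces $x_2\in\{\I,\cab,\bca\}$ and hence makes $\Y$ sum-closed. Then every basis element of $\Y$ is sum-indecomposable; choosing a \emph{separable} basis element $b$ (one exists since $\Y\subsetneq\Sep$, as a minimal element of $\Sep\setminus\Y$ is separable and lies in the basis of $\Y$), the permutation $b$ is non-singleton and sum-indecomposable, hence skew-decomposable, so $b\in\D[\Y]$; meanwhile $b\oplus 1$ is sum-decomposable and contains $b\notin\Y$, and a sum-decomposable (so skew-indecomposable) permutation lies in $\D[\Y]$ only if it lies in $\Y$, so $b\oplus 1\notin\D[\Y]$. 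Thus $(b,1)$ witnesses non-sum-closedness. The cases $x_1\in\{\acb,\bac\}$ resist such a direct witness search because there $\Y$ need not be sum-closed (reducedness only forbids $x_2=\I$), which is precisely why I would favour the uniform properness argument of the previous paragraph over case-by-case witness construction.
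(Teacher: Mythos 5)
Your proof is correct and takes essentially the same route as the paper's: your ``if'' direction is the paper's observation that $\I\X=\X$ when $x_1$ is sum-closed (you derive it from Observation~\ref{obs-IandDclosure} and associativity of inflation, which is the same computation the paper performs via the rewrite identity $\I x\rel x$), and your ``only if'' direction hinges on exactly the fact the paper invokes --- that a proper subclass of $\Sep$ containing $1$ cannot be both sum- and skew-closed --- for which you supply the short induction the paper omits. The optional witness construction for $x_1=\D$ is extra material not needed for the main argument (and note it has a small edge case at $x=\D$ itself, where $\Y=\uni$ is not sum-closed, so not every basis element of $\Y$ is sum-indecomposable).
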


\begin{proof}
If $x_1$ is equal to $\I$, $\bca$ or $\cab$ then $\I x \rel x$ so $\cl(\I x) = \cl(x) = \X$, i.e., $\X$ is sum closed. Correspondingly if $x_1$ is equal to $\D$, $\acb$ or $\bac$ then $\X$ is skew-closed. The converse follows in each case because no proper subclass of \Sep other than $\uni$ can be both sum- and skew-closed.
\end{proof}

\begin{lemma}
\label{lem-notI} 
Let $\X = \cl(x)$ be a permutation class represented by a nonempty reduced word $x$, and let $x_1\in\cA$ be the first symbol of~$x$. If $x_1 \neq \I$ then for every permutation $\pi\in \X$ there is a sum-indecomposable permutation $\sigma \in \X$ containing $\pi$.
\end{lemma}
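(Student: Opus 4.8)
The plan is to produce the required sum-indecomposable extension by adjoining a single new point to $\pi$ at an appropriate corner. Write $\X=\cl(x_1)[\Y]$, where $\Y=\cl(x_2\cdots x_n)$ is the class represented by the remaining letters; note $\Y$ is nonempty, so it contains the singleton $1$. The point is that the ``top level'' of $\X$ is governed by the single class $\cl(x_1)$, and I would use the basic fact recorded earlier that every skew-decomposable permutation is sum-indecomposable. It therefore suffices, given $\pi\in\X$, to exhibit a \emph{skew-decomposable} member of $\X$ containing $\pi$; concretely I will show that one of $\pi\ominus 1$ or $1\ominus\pi$ lies in $\X$, the choice depending on $x_1$.

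First dispose of the skew-closed case. By Lemma~\ref{lem-l0}, $\X$ is skew-closed precisely when $x_1\in\{\D,\acb,\bac\}$. In that case $\pi,1\in\X$ give $\pi\ominus 1\in\X$ at once, and $\pi\ominus 1$ is skew-decomposable (when $\pi\neq\emptyset$; the empty case is trivial) hence sum-indecomposable, while visibly $\pi\le\pi\ominus1$. This settles three of the five admissible leading letters.

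The two remaining cases, $x_1=\bca$ and $x_1=\cab$, are where the work lies, since here $\X$ is sum-closed but not skew-closed, so neither one-point extension is automatically available. The key structural observations are that $\Av(231)$ is closed under $\mu\mapsto 1\ominus\mu$ and $\Av(312)$ under $\mu\mapsto\mu\ominus1$: a routine check shows that the adjoined extreme point (a top-left maximum for $231$, a bottom-right minimum for $312$) cannot play any role in the forbidden pattern, so no new copy of $231$, resp.\ $312$, is created. Granting this, and writing $\pi=\mu[\nu_1,\dots,\nu_k]$ with $\mu\in\cl(x_1)$ and $\nu_i\in\Y$, I would lift the construction through the inflation: for $x_1=\bca$,
\[
1\ominus\pi=(1\ominus\mu)[1,\nu_1,\dots,\nu_k],
\]
which lies in $\bca[\Y]=\X$ because $1\ominus\mu\in\bca$ and every inflating factor ($1$ for the new point, $\nu_i$ for the rest) belongs to $\Y$. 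Symmetrically $\pi\ominus1=(\mu\ominus1)[\nu_1,\dots,\nu_k,1]\in\cab[\Y]=\X$ when $x_1=\cab$. In each case the result is skew-decomposable, hence sum-indecomposable, and contains $\pi$.

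I expect the main obstacle to be exactly the verification underlying the previous paragraph: pinning down, for each of the two sum-closed leading letters, the unique corner at which a point may be adjoined without creating the forbidden pattern, and then confirming that the corresponding one-point extension of $\pi$ factors correctly as an inflation of the one-point extension of $\mu$. The hypothesis $x_1\ne\I$ is genuinely needed: for $x_1=\I$ every element of $\X=\I[\Y]$ is a direct sum of members of $\Y$, and adjoining a single point leaves it sum-decomposable, so no such corner construction can succeed.
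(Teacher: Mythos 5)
Your proof is correct and follows essentially the same route as the paper's: adjoin a single extreme point to obtain a skew-decomposable (hence sum-indecomposable) extension, justified by writing $\pi$ as an inflation $\mu[\nu_1,\dots,\nu_k]$ and observing that $\mu\ominus 1$ (for $\cab$) or $1\ominus\mu$ (for the other letters) stays in the leading-letter class. The only cosmetic difference is that you dispatch the letters $\D$, $\acb$, $\bac$ via skew-closure (Lemma~\ref{lem-l0}) rather than repeating the inflation argument, which is a harmless shortcut.
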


\begin{proof}
Let $x = x_1 x^{-}$ and suppose first that $x_1=\cab$, and $\pi \in \X$. Write $\pi = \sigma[\rho_1, \rho_2, \dots, \rho_k]$ for some $\sigma \in \cab$ and $\rho_1, \rho_2, \dots, \rho_k \in \cl(x^{-})$. Then $\pi \ominus 1 = (\sigma \ominus 1) [\rho_1, \rho_2, \dots, \rho_k, 1]$ is in $\X$ since $\sigma \ominus 1 \in \cab$. If $x_1$ is neither $\I$ nor $\cab$, a similar argument shows that for every $\pi \in \X$, we also have $1 \ominus \pi \in \X$.
\end{proof}

\begin{lemma}
\label{lem-lA} 
Let $\X$ be a permutation class represented by a nonempty reduced word $x= \I x^{-}$.  Let $\X^{-} \! = \cl(x^{-})$. A permutation $\pi$ belongs to $\X^{-}$ if and only if $\pi$ is contained in a sum-indecomposable permutation $\sigma$ belonging to~$\X$.
\end{lemma}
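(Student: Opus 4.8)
The plan is to prove both implications directly from the structural identity $\X = \cl(\I x^{-}) = \I[\X^{-}]$, invoking Lemma~\ref{lem-notI} for the nontrivial direction. Two preliminary facts set things up. First, since $\I$ consists of the increasing permutations and $\sigma\oplus\tau = 12[\sigma,\tau]$, the definition of inflation shows that every element of $\X = \I[\X^{-}]$ is a direct sum $\rho_1\oplus\rho_2\oplus\dotsb\oplus\rho_k$ with each $\rho_i\in\X^{-}$. Second, inflating the one-point permutation $1\in\I$ by an arbitrary $\rho\in\X^{-}$ returns $\rho$ itself, so $\X^{-}\subseteq\X$. I would also record, for later use, that because $x=\I x^{-}$ is reduced and $\I\I=\I$ is one of the rewrite rules (Observation~\ref{obs-red}), the word $x^{-}$ cannot begin with $\I$; moreover $x^{-}$, being a factor of the reduced word $x$, is itself reduced.

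For the backward implication, suppose $\sigma\in\X$ is sum-indecomposable and $\pi\le\sigma$. Writing $\sigma=\rho_1\oplus\dotsb\oplus\rho_k$ with each nonempty $\rho_i\in\X^{-}$, sum-indecomposability forces $k=1$, whence $\sigma=\rho_1\in\X^{-}$. Since $\X^{-}$ is a permutation class, hence downward closed, $\pi\le\sigma\in\X^{-}$ yields $\pi\in\X^{-}$.

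For the forward implication, suppose $\pi\in\X^{-}$. If $x^{-}$ is the empty word then $\X^{-}=\uni$ and $\X=\I$, and the claim is immediate since every element of $\uni$ is contained in the sum-indecomposable permutation $1\in\I$. Otherwise $x^{-}$ is a nonempty reduced word whose first symbol is not $\I$, so Lemma~\ref{lem-notI} applied to $\X^{-}=\cl(x^{-})$ produces a sum-indecomposable $\sigma\in\X^{-}$ with $\pi\le\sigma$. As $\X^{-}\subseteq\X$, this $\sigma$ lies in $\X$ and is the required sum-indecomposable permutation.

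The main point to get right is the forward direction: it is exactly the reducedness of $x$ that guarantees the first letter of $x^{-}$ differs from $\I$, and this is precisely the hypothesis needed to apply Lemma~\ref{lem-notI}. Were $x^{-}$ to begin with $\I$, the class $\X^{-}$ would be sum-closed and might contain no sum-indecomposable permutation large enough to capture a given $\pi$, so this reduction is essential rather than cosmetic. The degenerate case $x^{-}=\lambda$ must be peeled off separately because Lemma~\ref{lem-notI} is stated for nonempty words, but it is handled in one line.
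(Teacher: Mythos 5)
Your proof is correct and follows essentially the same route as the paper's: the forward direction applies Lemma~\ref{lem-notI} to $\X^{-}$ (using reducedness of $x$ to ensure its first symbol is not $\I$, with the empty-word case peeled off), and the backward direction uses that $\X=\I[\X^{-}]$ is the sum closure of $\X^{-}$, so sum-indecomposable elements of $\X$ already lie in $\X^{-}$. The only difference is cosmetic: you spell out the decomposition $\sigma=\rho_1\oplus\dotsb\oplus\rho_k$ explicitly where the paper states the same fact in one sentence.
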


\begin{proof}
The result is trivial if $x^{-}$ is the empty word. Otherwise, since $x$ is reduced we know that the first character of $x^{-}$ is not equal to $\I$. 

Suppose that $\pi$ belongs to $\X^-$. Then, applying Lemma~\ref{lem-notI} to the class $\X^-$, we conclude that $\pi$ is contained in a sum-indecomposable permutation $\sigma$ belonging to $\X^-$ (and hence also to $\X$). 

Conversely, suppose $\pi$ is contained in a sum-indecomposable permutation $\sigma \in \X$. Since $\X$ is the sum closure of $\X^{-}$ the sum-indecomposable permutations of $\X$ all belong to $\X^{-}$, so $\sigma \in \X^{-}$ and hence $\pi \in \X^{-}$.
\end{proof}

\begin{lemma}
\label{lem-lB} 
Let $\X$ be a permutation class represented by a nonempty reduced word $x= \bac x^{-}$. Let $\X^{-} \! = \cl(x^{-})$. A permutation $\pi$ belongs to $\X^{-}$ if and only if $\pi$ is contained in a sum-indecomposable
permutation $\sigma$ such that $\sigma \oplus 1 \in \X$.
\end{lemma}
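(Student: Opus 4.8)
The plan is to prove the two implications separately; the forward direction is quick, while the reverse direction rests on a structural fact about $\Av(213)$, which is where the real work lies.

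For the forward direction, suppose $\pi\in\X^{-}$. Since $x=\bac x^{-}$ is reduced, the first symbol of $x^{-}$ cannot be $\I$, as otherwise the identity $\bac\I=\bac$ of Observation~\ref{obs-red} would apply and contradict reducedness. Hence I would apply Lemma~\ref{lem-notI} to the class $\X^{-}$ to obtain a sum-indecomposable $\sigma\in\X^{-}$ containing $\pi$. It then remains only to check that $\sigma\oplus1\in\X$, and this is immediate: since $12\in\bac$ and $\sigma,1\in\X^{-}$, we have $\sigma\oplus1=12[\sigma,1]\in\bac[\X^{-}]=\X$. (If $x^{-}$ is empty then $\X^{-}=\uni$ and the claim is trivial, taking $\sigma=1$.)

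For the reverse direction, I would first reduce to the essential case: by downward closure of $\X^{-}$ it suffices to show that if $\sigma$ is sum-indecomposable and $\sigma\oplus1\in\X$, then $\sigma\in\X^{-}$. Write $\sigma\oplus1=\mu[\rho_1,\dots,\rho_m]$ with $\mu\in\bac=\Av(213)$ and $\rho_1,\dots,\rho_m\in\X^{-}$. The crucial observation concerns the top-right corner point $T$ of $\sigma\oplus1$, namely the appended $1$, which is simultaneously the greatest value and occupies the last position. This single point lies in exactly one block, and that block is therefore both the topmost block and the rightmost block of the inflation; consequently the maximum element of $\mu$ must coincide with its last element. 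But a permutation in $\Av(213)$ whose maximum lies in the final position must be the identity, since any inversion among the earlier entries would form a $213$ pattern together with that maximum. Hence $\mu=12\cdots m$ and $\sigma\oplus1=\rho_1\oplus\cdots\oplus\rho_m$ is a direct sum.

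Finally I would close the argument using uniqueness of the sum-decomposition into sum-indecomposables. As $\sigma$ is sum-indecomposable, the sum-indecomposable decomposition of $\sigma\oplus1$ is exactly $(\sigma,1)$, so the coarser grouping $(\rho_1,\dots,\rho_m)$ must arise by grouping consecutive terms of this two-term sequence. This leaves only two possibilities: either $m=1$ and $\rho_1=\sigma\oplus1$, giving $\sigma\le\rho_1\in\X^{-}$, or $m=2$ with $\rho_1=\sigma$ and $\rho_2=1$, giving $\sigma=\rho_1\in\X^{-}$. In either case $\sigma\in\X^{-}$, completing the proof. The main obstacle is the structural claim of the middle paragraph — identifying that the extremal corner point forces the outer pattern $\mu$ to be the identity; once this is in hand, the remainder is routine bookkeeping with the definitions of inflation and sum-decomposition.
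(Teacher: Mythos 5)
Your proof is correct and follows essentially the same route as the paper's: the forward direction via Lemma~\ref{lem-notI} and $\sigma\oplus1=12[\sigma,1]$, and the reverse direction by observing that the outer $213$-avoiding permutation in the inflation must end with its maximum, hence be increasing, and then invoking sum-indecomposability of $\sigma$. Your explicit treatment of the case $m=1$ (where the whole of $\sigma\oplus1$ is a single block, handled by downward closure of $\X^{-}$) is a minor bookkeeping refinement of the same argument, which the paper glosses over.
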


\begin{proof} 
The result is trivial if $x^{-}$ is the empty word. Otherwise, since $x$ is reduced, the first character of $x^{-}$ is not equal to~$\I$. 

Suppose that $\pi$ belongs to $\X^-$. By Lemma~\ref{lem-notI}, there is a sum-indecomposable permutation $\sigma \in \X^{-}$ containing $\pi$, and then clearly $\sigma\oplus 1 = 12[\sigma, 1]$ belongs to~$\X$.
 
Conversely, suppose that $\pi$ is contained in a sum-indecomposable permutation $\sigma$ such that $\sigma\oplus 1$ belongs to~$\X$. We claim that $\sigma$, and hence also $\pi$, belongs to~$\X^-$. Since $\sigma\oplus 1$ is in $\X=\bac \X^{-}$, it can be obtained by inflating a $213$-avoiding permutation
$\rho$ by elements of $\X^{-}$. As $\sigma \oplus 1$ ends with its maximum element, so must $\rho$, and as $\rho \in \bac$, $\rho$ must be increasing. But $\sigma$ is sum-indecomposable, so the only possibility is that $\rho = 12$ and $\sigma \oplus 1 = 12[\sigma, 1]$. Hence $\sigma \in \X^{-}$ as claimed.
\end{proof}

\begin{theorem}\label{thm-order} Let $x,y \in \cA^*$ be two reduced words. Then
$x$ is dominated by $y$ if and only if $\cl(x)$ is a subclass of~$\cl(y)$.
\end{theorem}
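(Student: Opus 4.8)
\emph{Proof plan.} The forward implication (domination of $x$ by $y$ implies $\cl(x)\subseteq\cl(y)$) is the routine one, and I would dispatch it by monotonicity. Since $\cl\colon\cA^*\to\Rep$ is a monoid homomorphism and the product $\X\Y=\X[\Y]$ is monotone in each factor, replacing a contiguous factor of a word by one representing a smaller (or equal) class can only shrink the represented class. Each elementary domination step is exactly such a replacement: erasing a letter $a$ replaces it by $\uni$, and $\uni\subseteq\cl(a)$ for every $a\in\cA$; and the substitutions $\bca,\cab\to\I\D$ and $\acb,\bac\to\D\I$ are precisely the inclusions of Observation~\ref{obs-order}. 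Composing the steps gives $\cl(x)\subseteq\cl(y)$.

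For the converse I would argue by induction, taking the measure to be the lexicographic pair $(|x|,|y|)$ on reduced words (always passing to reduced representatives before invoking the hypothesis, and using that every reduction rule of Observation~\ref{obs-red} is itself an erasing step, hence a domination step, and that domination is transitive). The base case $x=\lambda$ is immediate, since the empty word is dominated by any $y$ by erasing all letters. Throughout I would cut down the casework using the eight symmetries, which act compatibly on $\cA$ (complementation swaps $\I\leftrightarrow\D$ and acts dihedrally on $\{\acb,\bac,\bca,\cab\}$) and preserve reducedness, domination, $\cl$, and the statement; so I may fix a representative first letter of $y$. The engine is the family of \emph{peeling} lemmas: Lemma~\ref{lem-lA} for leading $\I$, Lemma~\ref{lem-lB} for leading $\bac$, and their symmetric analogues for $\D,\acb,\bca,\cab$, each of which recovers $\cl(x^-)$ as an explicit downward-closed set determined by $\cl(x)$.

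The inductive step I would organise around a single dichotomy on $y=y_1y^-$. \textbf{Erase case:} if already $\cl(x)\subseteq\cl(y^-)$, recurse on $(x,y^-)$ (the measure drops in its second coordinate) and erase the leading $y_1$. \textbf{Use case:} if $\cl(x)\not\subseteq\cl(y^-)$, I must show $x_1$ is compatible with $y_1$ and peel $x_1$, recursing on $(x^-,\cdot)$ (the measure drops in its first coordinate). For a \emph{pure} leading letter, say $y_1=\I$, this dichotomy is clean: if $x_1\neq\I$ then Lemma~\ref{lem-notI} places each $\pi\in\cl(x)$ inside a sum-indecomposable $\sigma\in\cl(x)\subseteq\cl(y)$, and Lemma~\ref{lem-lA} puts such $\sigma$ in $\cl(y^-)$, forcing $\cl(x)\subseteq\cl(y^-)$ (the erase case); so in the use case necessarily $x_1=\I$, and Lemma~\ref{lem-lA} applied to both words yields $\cl(x^-)\subseteq\cl(y^-)$, after which I recurse and prepend $\I$. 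For a \emph{mixed} leading letter, say $y_1=\bac$, the only words dominated by the single letter $\bac$ are $\lambda,\I,\D,\D\I,\bac$, so in the use case I expect $x_1\in\{\I,\D,\bac\}$; when $x_1=\bac$ I peel both via Lemma~\ref{lem-lB}, and when $x_1\in\{\D,\I\}$ I would realise $x$ through the expansion $\bac\to\D\I$ (followed, for $x_1=\I$, by erasing the exposed $\D$), after establishing the refined inclusion $\cl(x^-)\subseteq\cl(\I y^-)$ that licenses the recursion.

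The hard part will be the mixed-letter use case, specifically justifying that the weak hypothesis $\cl(x)\subseteq\cl(\bac y^-)$ actually upgrades to the refined inclusions needed to recurse (e.g.\ $\cl(x^-)\subseteq\cl(\I y^-)$ when $x_1=\D$). Here I would lean on the fact that $\bac=\Av(213)\subseteq\Sep$, so every skew-indecomposable permutation of size at least two occurring in $\cl(y)$ is sum-decomposable; combined with the $\D$-peeling lemma (the symmetric analogue of Lemma~\ref{lem-lA}) and Lemma~\ref{lem-lB}, this should show that the sum-indecomposable building blocks produced when peeling $x$ already lie in $\cl(y^-)$, hence in the sum-closure $\cl(\I y^-)$. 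Two pieces of bookkeeping remain delicate but routine: checking reducedness of the intermediate words $\I y^-$, $\D\I y^-$ (and reducing them while tracking that reduction is a chain of erasings), and confirming that Lemma~\ref{lem-l0}'s closure-type constraint rules out the remaining incompatible values of $x_1$. With the lexicographic measure, every branch recurses on a strictly smaller pair, so these cases close the induction.
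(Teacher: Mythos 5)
Your plan is essentially the paper's own proof: the forward direction by monotonicity of inflation, and the converse by induction on word lengths, with symmetry fixing the leading letter of $y$ to be $\I$ or $\bac$, and with Lemmas~\ref{lem-l0}, \ref{lem-notI}, \ref{lem-lA} and~\ref{lem-lB} doing the peeling. Your pure case reproduces the paper's Cases 1--2, your $x_1=\bac$ subcase is its Case 3, and your $x_1=\D$ treatment (sum-closure of $\cl(x^-)$ forced by reducedness, then Lemma~\ref{lem-lB} applied to the sum-indecomposable summands) is sound and in fact unifies the paper's Cases 4 and 5. However, two steps in the mixed-letter case have genuine gaps as written.

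First, the exclusion of $x_1\in\{\bca,\cab,\acb\}$ from the use case. Your word-level justification (``the only words dominated by the single letter $\bac$ are \dots'') is circular, since domination and inclusion are only known to coincide once the theorem is proved; and Lemma~\ref{lem-l0} does not suffice either. What is actually needed (the paper's Case 7) is that for each $\pi\in\cl(x)$ there is a sum-indecomposable $\sigma\in\cl(x)$ containing $\pi$ with $\sigma\oplus 1\in\cl(x)\subseteq\cl(y)$, so that Lemma~\ref{lem-lB} yields $\pi\in\cl(y^-)$. For $x_1\in\{\bca,\cab\}$ the closure $\sigma\oplus1\in\cl(x)$ does come from sum-closure, but for $x_1=\acb$ Lemma~\ref{lem-l0} gives only \emph{skew}-closure, which is useless here; the needed fact is that $\Av(132)$ is closed under appending a new maximum, a pattern-specific property not supplied by any lemma you cite. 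Second, the subcase $x_1=\I$: your recursion on $(x^-,\I y^-)$ yields the word domination of $x^-$ by $\I y^-$, but to conclude you need $\I x^-$ dominated by $\I y^-$, and prepending $\I$ to the chain gives only that $\I x^-$ is dominated by $\I\I y^-$ --- a word \emph{not} reachable from $y=\bac y^-$, since the single expansion $\bac\to\D\I$ produces one $\I$ and domination steps cannot duplicate letters. To close this branch you must either prove an additional cancellation lemma for domination chains (if $u$ is dominated by $\I v$ and $u$ does not begin with $\I$, then $\I u$ is dominated by $\I v$), or instead establish the stronger inclusion $\cl(x^-)\subseteq\cl(y^-)$, which is what the paper's Case 6 does (via $\D\,\cl(x)\subseteq\cl(y)$ and the Case 5 argument); it also follows from your refined inclusion by one more application of Lemmas~\ref{lem-notI} and~\ref{lem-lA}, using that reducedness forces $x_2\neq\I$, but that step is absent from your plan.
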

\begin{proof}
Clearly, if $x$ is dominated by $y$, then $\cl(x)$ is a subclass of~$\cl(y)$. 

Let us prove the converse. Let $\X=\cl(x)$ and $\Y=\cl(y)$ be the
permutation classes represented by $x$ and $y$, and suppose that $\X\subseteq
\Y$. Our goal is to show that $x$ is dominated by~$y$.

Let $x_1 x_2 \dotsb x_n$ be the sequence of symbols of $x$, and $y_1 y_2 \dotsb
y_m$ the sequence of symbols of~$y$.  We proceed by induction on $m+n$. Note
that if $x$ or $y$ is the empty word, the proof is trivial, so let us
assume that both $x$ and $y$ are nonempty.

Let $x^-$ be the word $x_2 \dotsb x_n$, let $y^-$ be the word $y_2 \dotsb y_m$,
let $\X^{-}$ be the class $\cl(x^-)$ and $\Y^{-}$ the class $\cl(y^-)$. Note that
we clearly have $\X^{-}\subseteq \X$ and $\Y^{-}\subseteq \Y$.

We now distinguish several cases, based on the symbols $x_1$ and $y_1$. We will
restrict ourselves to the situations when $y_1$ is equal to $\I$ or to $\bac$,
as the remaining cases are symmetric to these two.

\textbf{Case 1: $x_1=\I$ and $y_1=\I$.} 
By Lemma~\ref{lem-lA}, $\pi \in \X^-$ if and only if $\pi$ is contained in some sum-indecomposable element of $\X$. But then since $\X \subseteq \Y$, $\pi$ is contained in a sum indecomposable element of $\Y$ and hence, by the same lemma, $\pi$ belongs to $\Y^-$. So $\X^{-} \subseteq \Y^{-}$. Therefore, by induction, $x^{-}$ is dominated by $y^{-}$ and hence $x$ is dominated by $y$.

\textbf{Case 2: $x_1\neq\I$ and $y_1=\I$.} 
In this case, we claim that $\X \subseteq \Y^-$. To see this, choose $\pi \in \X$.  By Lemma~\ref{lem-notI}, there is a sum-indecomposable permutation $\sigma \in \X$ containing~$\pi$. Consequently, $\sigma$ belongs to $\Y$ as well, and by Lemma~\ref{lem-lA}, $\pi$ belongs to~$\Y^-$. By induction, $y^-$ dominates $x$ and therefore $y$ dominates $x$ as well.

\textbf{Case 3: $x_1=\bac$ and $y_1=\bac$.} Applying Lemma~\ref{lem-lB}
to $\X$ and $\Y$, we conclude that $\X^-\subseteq \Y^-$. Hence $x^-$ is dominated by
$y^-$, and $x$ is dominated by~$y$.
 
\textbf{Case 4: $x_1=\D$, $x_2\neq \I$ and $y_1=\bac$.} 
If $x$ has length $1$, then clearly $x$ is dominated by $y$, so assume that $x^-$ is nonempty. Note that since $x$ is reduced, we have either $x_2=\bca$ or $x_2=\cab$, and in particular $\X^-$ is sum-closed. We wish to show that $\X^-\subseteq \Y^-$. Let $\pi\in \X^-$. By Lemma~\ref{lem-notI}, there is a sum-indecomposable permutation $\sigma \in \X^-$ that contains~$\pi$. Since $\X^-$ is sum-closed, $\sigma\oplus 1$ belongs to $\X^-$ and therefore also to $\Y$, and so by Lemma~\ref{lem-lB}, $\pi$ belongs to~$\Y^-$. By induction, $x^-$ is dominated by $y^-$, hence $x$ is dominated by~$y$.

\textbf{Case 5: $x_1=\D$, $x_2=\I$ and $y_1=\bac$.} 
If $x$ has length two we are done, so assume this is not the case. Consider the word $x^{--}=x_3 \dotsb x_n$ and the class $\X^{--}=\cl(x^{--})$. Since $x$ is reduced, $\X^{--}$ is skew-closed. Let us prove that $\X^{--}\subseteq \Y^-$. Let $\pi\in \X^{--}$. The permutation $\sigma=\pi\ominus 1$ is in $\X^{--}$, since $\X^{--}$ is skew-closed. Therefore, $\sigma\oplus 1 \in \X^-$, and hence  $\sigma\oplus 1  \in \Y$. By Lemma~\ref{lem-lB}, $\X^{--} \subseteq \Y^{-}$, hence $x^{--}$ is dominated by $y^{-}$, and $x$ is dominated by~$y$.

\textbf{Case 6: $x_1=\I$ and $y_1=\bac$.} 
In this case, we may consider the class $\X^{+}=\D \X$. Since $\Y$ is skew-closed, $\X^{+}$ is a subclass of $\Y$, and by the argument from the previous case, we obtain that $x^{-}$ is dominated by $y^{-}$, and $x$ by~$y$ (the inductive hypothesis still applies since in the previous case it was applied to $x^{--}$ which is two characters shorter than $x$).

\textbf{Case 7: $x_1\in\{\bca, \cab, \acb\}$ and $y_1=\bac$.} 
We show that $\X \subseteq \Y^{-}$. Let $\pi \in \X$. By Lemma~\ref{lem-notI}, there is a sum-indecomposable permutation $\sigma \in \X$ containing~$\pi$. Moreover, our assumptions on $x_1$ guarantee that $\sigma \oplus 1$ is also in $\X$, and therefore in~$\Y$. By Lemma \ref{lem-lB} $\pi \in \Y^{-}$, hence $x$ is dominated by $y^-$, and therefore also by~$y$.

This completes the proof of the theorem.
\end{proof}

\begin{corollary}
 If $\X$ and $\Y$ are permutation classes represented by reduced words $x$ and
$y$, respectively, then $\X=\Y$ if and only if $x=y$.
\end{corollary}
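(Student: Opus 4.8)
The plan is to obtain this corollary as an immediate consequence of Theorem~\ref{thm-order} together with the antisymmetry of the domination order (recorded in the text as the statement that domination is a partial order on $\cA^*$). The forward implication is trivial: since $\cl\colon\cA^*\to\Rep$ is a well-defined map, $x=y$ gives $\X=\cl(x)=\cl(y)=\Y$ at once.

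For the converse I would assume $\X=\Y$ and read this as the conjunction of the two inclusions $\X\subseteq\Y$ and $\Y\subseteq\X$. Both $x$ and $y$ are reduced by hypothesis, so Theorem~\ref{thm-order} applies to each inclusion, yielding that $x$ is dominated by $y$ and simultaneously $y$ is dominated by $x$. Invoking antisymmetry of the domination order then forces $x=y$, completing the argument.

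There is essentially no obstacle here, since all the substantive work is carried by Theorem~\ref{thm-order}; the only point worth pausing on is why domination is antisymmetric. This was already justified earlier: domination is transitive by definition and admits no cycles, because each elementary step either strictly decreases the number of occurrences of letters from $\{\acb,\bac,\bca,\cab\}$, or leaves that number unchanged while strictly decreasing the total word length. A transitive relation with no cycles is a partial order, so antisymmetry is available and the corollary follows.
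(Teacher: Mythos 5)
Your proof is correct and follows exactly the same route as the paper: the paper's own argument likewise deduces mutual domination of $x$ and $y$ from Theorem~\ref{thm-order} and concludes $x=y$ by antisymmetry of the domination order, with the other direction dismissed as trivial. Your extra remark justifying antisymmetry (transitivity plus acyclicity of the rewriting steps) is a faithful restatement of the paper's earlier observation that domination is a partial order.
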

\begin{proof}
 If $\X=\Y$, then by Theorem~\ref{thm-order} $x$ is dominated by $y$ and $y$ is
dominated by $x$, hence $x=y$. The other direction is trivial.
\end{proof}

\subsection{Bases of representable classes}

We can determine the basis of a representable class easily from the reduced word that represents it. Though we do not require this result for the remainder of the paper, we record the procedure for doing so here.

For a set $F$ of permutations (generally always a subset of \Sep) let $\Avs(F)$ be the set of $F$-avoiding separable permutations, and for a subclass $\X$ of separable permutations, let $\Obs(\X)$ denote the minimal separable permutations not belonging to~$\X$.  With this notation, we always have $\X=\Avs(\Obs(\X))$. We call the elements of $\Obs(\X)$ the \emph{minimal separable obstructions} for~$\X$.

We will prove several general lemmas that will allow us to determine the minimal separable obstructions of any representable permutation class. For a set of permutations $F$, let $F\oplus 1$ denote the set $\{\pi\oplus 1;\; \pi\in F\}$, with $1\oplus F$, $1\ominus F$ and $F\ominus 1$ defined analogously. 

Note that each of the following results has a number of symmetric variations that we do not explicitly specify.

\begin{lemma}\label{lem-forb-bac}
Let $F$ be a set of skew-decomposable permutations, and let $\X=\Avs(F)$. Then
$\bac \X=\Avs(F\oplus 1)$.
\end{lemma}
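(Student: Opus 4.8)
The plan is to prove the two inclusions $\bac[\X]\subseteq\Avs(F\oplus 1)$ and $\Avs(F\oplus 1)\subseteq\bac[\X]$ separately. The whole argument is driven by two observations: that every $\pi\in F$, being skew-decomposable, is in particular \emph{sum-indecomposable}, and that $\bac=\Av(213)$ imposes a rigid ``staircase'' structure on the blocks of any inflation $\beta[\rho_1,\dots,\rho_k]$ with $\beta\in\bac$.

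For the inclusion $\bac[\X]\subseteq\Avs(F\oplus 1)$ I would argue by contradiction. Take $\mu=\beta[\rho_1,\dots,\rho_k]$ with $\beta\in\bac$ and each $\rho_i\in\X=\Avs(F)$, and suppose $\mu$ contains $\pi\oplus 1$ for some $\pi\in F$. Locate the block $c$ containing the top-right point of the occurrence (the image of the ``$1$''). Every point of the copy of $\pi$ is strictly left of and below this point, so any block $j\neq c$ that meets the occurrence satisfies $j<c$ and $\beta_j<\beta_c$. The key step uses that $\beta$ avoids $213$: if two such blocks $j_1<j_2$ had $\beta_{j_1}>\beta_{j_2}$, then $\beta_{j_1}\beta_{j_2}\beta_c$ would be an occurrence of $213$. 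Hence all blocks meeting the occurrence --- the blocks below-left of $c$ together with $c$ itself --- have strictly increasing positions and values, forming an increasing staircase. The copy of $\pi$ is therefore spread across these blocks as a direct sum of its per-block patterns, so since $\pi$ is sum-indecomposable it must lie entirely inside a single block $\rho_i$, contradicting $\rho_i\in\Avs(F)$.

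For the reverse inclusion I would induct on $|\mu|$ over separable $\mu$ avoiding $F\oplus 1$. Singletons and the empty permutation lie in $\X\subseteq\bac[\X]$. Otherwise write $\mu=T\ominus\tau$, where $T$ is the first (top-left) skew-indecomposable component and $\tau$ is the skew sum of the remaining components (so $\tau$ is empty and $T=\mu$ when $\mu$ is sum-decomposable). Then $T$ is a singleton or sum-decomposable, so I can peel off its first sum-component $\rho$, writing $T=\rho\oplus\sigma$. If $\sigma$ is nonempty, then inside $\mu$ the block $\rho$ has a point of $\sigma$ above and to its right, so $\rho$ cannot contain any $\pi\in F$ --- otherwise $\pi\oplus 1\leq\mu$ --- giving $\rho\in\Avs(F)=\X$; this also holds trivially when $T=\rho$ is a singleton. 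Both $\sigma$ and $\tau$ are proper separable subpermutations of $\mu$ avoiding $F\oplus 1$, hence lie in $\bac[\X]$ by induction. Finally $\mu=(\rho\oplus\sigma)\ominus\tau\in\bac[\X]$, which follows from $\rho\in\X$ together with the closure of $\bac$ under skew sums (Observation~\ref{obs-red}) and under prepending a new minimum, i.e.\ $\gamma\in\bac\Rightarrow 1\oplus\gamma\in\bac$.

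The main obstacle is the first inclusion, and specifically the interaction between the $213$-avoidance of the skeleton $\beta$ and the sum-indecomposability of $\pi$: one must show that these two constraints together force the entire copy of $\pi$ into a single inflated block. The staircase argument is exactly what makes this work, and it is the one place where the hypothesis that $F$ consists of skew-decomposable permutations is essential, since that is precisely what guarantees $\pi$ is sum-indecomposable. By contrast, the inductive construction in the second inclusion is routine once one checks that the first block peeled off necessarily lands in $\X$.
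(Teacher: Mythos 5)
Your proof is correct, and it establishes the same two inclusions, but your route through the first one is genuinely different. For $\bac \X\subseteq\Avs(F\oplus 1)$ the paper uses a one-line citation of Lemma~\ref{lem-lB}: if $\sigma$ is sum-indecomposable and $\sigma\oplus 1\in\bac \X$, then $\sigma\in\X$ (combined with the downward closure of $\bac \X$ to pass from containment of $\pi\oplus 1$ to membership of $\pi\oplus1$ in $\bac\X$). You instead prove the needed fact from scratch with the staircase argument: in $\beta[\rho_1,\dots,\rho_k]$ with $\beta\in\bac$, the blocks meeting an occurrence of $\pi\oplus 1$ must increase in both position and value, since any inversion among them together with the block holding the top-right point would give a $213$ in $\beta$; hence the sum-indecomposable $\pi$ sits inside a single block, contradicting $\rho_i\in\Avs(F)$. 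This is in substance a re-derivation of the relevant direction of Lemma~\ref{lem-lB}, upgraded from membership to pattern containment. What the paper's route buys is brevity by reuse; what yours buys is self-containedness, and it also sidesteps a small technical wrinkle: Lemma~\ref{lem-lB} is stated for representable classes, whereas $\X=\Avs(F)$ need not be representable, so the paper's citation tacitly relies on the fact that the cited direction of that lemma's proof works for any class. Your second inclusion is essentially the paper's induction: the paper splits into the skew-decomposable case (both skew summands handled by induction, then skew-closure of $\bac \X$) and the sum-decomposable case (first summand lands in $\X$ because it must avoid $F$, and the skeleton becomes $1\oplus\rho\in\bac$), while you fuse the two cases into the single decomposition $\mu=(\rho\oplus\sigma)\ominus\tau$; the underlying facts --- that a point above and to the right forces the first sum-component into $\X$, that $\gamma\in\bac$ implies $1\oplus\gamma\in\bac$, and that $\bac$ is skew-closed --- are identical.
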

\begin{proof} Since the elements of $F$ are skew-decomposable, the class $\X$ is sum-closed. Notice that no permutation in $F\oplus 1$ can belong to~$\bac \X$, since then $F$ would belong to $\X$ by Lemma~\ref{lem-lB}.  Therefore $\bac \X\subseteq \Avs(F\oplus 1)$. To prove the opposite inclusion, let $\pi\in\Avs(F\oplus 1)$ and proceed by induction on $|\pi|$ to show that $\pi\in\bac X$. For $\pi=1$ this is clear. Suppose that $\pi$ is a skew sum of the form $\pi=\pi_1\ominus\pi_2$. Then by induction, both $\pi_1$ and $\pi_2$ are in $\bac \X$, and since $\bac \X$ is skew-closed, we are done. Finally, suppose that $\pi=\pi_1\oplus\pi_2$. We then see that $\pi_1$ is in $\Avs(F)=\X$, and $\pi_2$ is an inflation of a permutation $\rho\in\bac$ by elements of $\X$ by induction. It follows that $\pi$ is an inflation of $1\oplus\rho \in \bac$ by elements of $\X$, as claimed.
\end{proof}

\begin{corollary}\label{cor-forb-bac}
If $\X$ is a sum-closed permutation class, then $\Obs(\bac \X)=\Obs(\X)\oplus
1$. If $\X$ is an arbitrary permutation class (not necessarily sum-closed), then
$\Obs(\bac \X)=\Obs(\bac \I \X)=\Obs(\I \X)\oplus 1$.
\end{corollary}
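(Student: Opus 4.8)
The plan is to deduce both parts from Lemma~\ref{lem-forb-bac} together with two elementary facts. The first is that the map $\pi\mapsto\pi\oplus 1$ is an order-embedding of the containment order: it is clearly order-preserving, and if a copy of $\pi\oplus 1$ sits inside $\sigma\oplus 1$ then, according to whether that copy uses the top-right point of $\sigma\oplus1$ or not, one recovers a copy of $\pi$ (in the first case the remaining points lie in $\sigma$ and form $\pi$; in the second case the global top-right point is unused, so the whole copy lies in $\sigma$ and already contains $\pi$). Hence $\pi\oplus1\le\sigma\oplus1$ forces $\pi\le\sigma$, and in particular $\pi\mapsto\pi\oplus1$ sends antichains of separable permutations to antichains of separable permutations. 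The second fact is the routine observation that if $G$ is an antichain of separable permutations then $\Obs(\Avs(G))=G$: every $g\in G$ lies outside $\Avs(G)$, and it is minimal among separable non-members since a proper separable subpermutation of $g$ containing some $g'\in G$ would give $g'<g$, contradicting that $G$ is an antichain; conversely any minimal separable non-member contains some $g\in G$ and, by minimality, must equal it.

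For the first part I would take $\X$ sum-closed and set $F=\Obs(\X)$, so that $\X=\Avs(F)$. The key preliminary step is to check that every element of $F$ is skew-decomposable, so that Lemma~\ref{lem-forb-bac} is applicable. Indeed, a minimal separable permutation $m\notin\X$ cannot be sum-decomposable: writing $m=m_1\oplus m_2$ with both parts nonempty, sum-closedness of $\X$ would force one of $m_1,m_2$ to lie outside $\X$, contradicting the minimality of~$m$. Since $m$ has size at least $2$ and is sum-indecomposable, it is skew-decomposable. Thus $F$ is a set of skew-decomposable permutations and Lemma~\ref{lem-forb-bac} gives $\bac\X=\Avs(F\oplus1)$. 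As $F$ is an antichain of separable permutations, so is $F\oplus1$, and therefore the second fact above yields $\Obs(\bac\X)=\Obs(\Avs(F\oplus1))=F\oplus1=\Obs(\X)\oplus1$.

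For the second part I would reduce to the first. Since $\bac\I=\bac$ (Observation~\ref{obs-red}), we have $\bac\I\X=\bac\X$ and hence $\Obs(\bac\X)=\Obs(\bac\I\X)$. Moreover $\I\X$ is sum-closed: using associativity of inflation together with $\I\I=\I$, one has $\I(\I\X)=(\I\I)\X=\I\X$, so $\I\X=\I[\I\X]$ and Observation~\ref{obs-IandDclosure} applies. Applying the first part to the sum-closed class $\I\X$ gives $\Obs\big(\bac(\I\X)\big)=\Obs(\I\X)\oplus1$, and combining with $\bac(\I\X)=\bac\X$ produces $\Obs(\bac\X)=\Obs(\I\X)\oplus1$, as required.

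The only genuinely nonroutine ingredients are the two auxiliary facts of the first paragraph; everything else is bookkeeping built on Lemma~\ref{lem-forb-bac} and Observations~\ref{obs-IandDclosure} and~\ref{obs-red}. I expect the main obstacle to be the verification that $\pi\mapsto\pi\oplus1$ reflects containment, since this is precisely what guarantees both that $\Obs(\X)\oplus1$ remains an antichain and that each $f\oplus1$ stays a \emph{minimal} obstruction; without it one could not rule out new, strictly smaller separable obstructions to $\bac\X$ outside the family $\{f\oplus1: f\in\Obs(\X)\}$.
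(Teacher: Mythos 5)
Your proof is correct and follows exactly the route the paper intends: the paper states this corollary without proof as an immediate consequence of Lemma~\ref{lem-forb-bac}, and your argument supplies precisely the implicit details --- that obstructions of a sum-closed subclass of $\Sep$ are skew-decomposable (so the lemma applies with $F=\Obs(\X)$), that $\pi\mapsto\pi\oplus 1$ reflects containment so $\Obs(\X)\oplus 1$ is again an antichain of separable permutations and hence equals $\Obs(\Avs(\Obs(\X)\oplus 1))$, and the reduction of the general case to the sum-closed case via $\bac\I=\bac$ and $\I\I=\I$ from Observations~\ref{obs-red} and~\ref{obs-IandDclosure}.
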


\begin{lemma}\label{lem-forb-i}
 Let $F$ be a set of skew-decomposable permutations, and let $\X=\Avs(F)$. Then $\D \X=\Avs\left((F\oplus 1)\cup (1\oplus F)\right)$.
\end{lemma}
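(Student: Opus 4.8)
The plan is to prove the two inclusions $\D\X\subseteq\Avs\big((F\oplus1)\cup(1\oplus F)\big)$ and the reverse separately, in the spirit of the proof of Lemma~\ref{lem-forb-bac}. For the first inclusion I would use that $\D\X$ is a permutation class, so it suffices to check that no member of $(F\oplus1)\cup(1\oplus F)$ itself lies in $\D\X$. Each such permutation $\phi\oplus1$ or $1\oplus\phi$ (with $\phi\in F$) is sum-decomposable, hence skew-indecomposable, so it is its own unique skew-indecomposable component; therefore it belongs to $\D\X=\D[\X]$ precisely if it belongs to $\X=\Avs(F)$. But it contains $\phi\in F$, so it is not in $\X$, and thus not in $\D\X$. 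This gives the first inclusion immediately.

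For the reverse inclusion I would take $\pi\in\Avs\big((F\oplus1)\cup(1\oplus F)\big)$ and show $\pi\in\D\X$ by induction on $|\pi|$. The base case $\pi=1$ is clear. If $\pi$ is skew-decomposable, write $\pi=\pi_1\ominus\pi_2$; both factors avoid $(F\oplus1)\cup(1\oplus F)$, so by induction both lie in $\D\X$, and since $\D\X$ is skew-closed (as $\D\D=\D$ by Observations~\ref{obs-IandDclosure} and~\ref{obs-red}) we get $\pi\in\D\X$. The only remaining case is $\pi$ sum-decomposable, and here it suffices to prove $\pi\in\X$, since then $\pi\in\X\subseteq\D\X$; equivalently, it suffices to show $\pi$ avoids $F$.

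The crux --- and the step I expect to be the main obstacle --- is the structural claim that if a sum-decomposable permutation $\pi=\pi_1\oplus\pi_2$ contains a skew-decomposable permutation $\phi$, then $\pi$ contains $\phi\oplus1$ or $1\oplus\phi$. Granting this claim, the sum-decomposable case follows: if $\pi$ contained some $\phi\in F$ it would contain $\phi\oplus1$ or $1\oplus\phi$, contradicting $\pi\in\Avs\big((F\oplus1)\cup(1\oplus F)\big)$; hence $\pi$ avoids $F$. To prove the claim, fix an occurrence $E$ of $\phi$ in $\pi$ and split it as $A=E\cap\pi_1$ and $B=E\cap\pi_2$. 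If both $A$ and $B$ were nonempty then, since every point of $\pi_1$ lies below and to the left of every point of $\pi_2$, the pattern of $E$ would be a sum of the pattern of $A$ and the pattern of $B$, making $\phi$ sum-decomposable --- impossible, as a skew-decomposable permutation is sum-indecomposable. So $E$ lies entirely inside $\pi_1$ or entirely inside $\pi_2$. In the first case any point of the (nonempty) block $\pi_2$ sits above and to the right of all of $E$, producing a copy of $\phi\oplus1$; in the second case any point of $\pi_1$ produces a copy of $1\oplus\phi$. This completes the plan; the only facts used are the uniqueness of the skew decomposition, the incompatibility of sum- and skew-decomposability, and the closure identities recorded in Observations~\ref{obs-IandDclosure} and~\ref{obs-red}.
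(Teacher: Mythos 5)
Your proof is correct, but it is organized somewhat differently from the paper's. The paper's proof of this lemma is not inductive: after the same easy direction, it writes $\pi\in\Avs\left((F\oplus 1)\cup(1\oplus F)\right)$ in one shot as a skew sum $\pi=\pi_1\ominus\dotsb\ominus\pi_k$ of skew-indecomposable components, notes that each component is either $1$ or sum-decomposable, argues that every summand of a sum-decomposable component must avoid $F$ (a point taken from any other summand would complete a copy of some $\phi\oplus 1$ or $1\oplus\phi$), and then finishes by invoking the sum-closure of $\X$ --- which is exactly where the hypothesis that $F$ consists of skew-decomposable (hence sum-indecomposable) permutations does its work, via the consequences of Observation~\ref{obs-closure-basis}. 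You instead transplant the inductive template of the paper's proof of Lemma~\ref{lem-forb-bac}: your skew-decomposable case goes through skew-closure of $\D\X$, and in the sum-decomposable case you prove by hand the straddling claim that an occurrence of a sum-indecomposable pattern in $\pi_1\oplus\pi_2$ must sit entirely inside one summand, concluding that such a $\pi$ lies in $\X$ itself. The two arguments rest on the same combinatorial fact; the paper packages it as sum-closure of $\X$ (leaving the straddling argument implicit inside the observation about bases), while you make it explicit and keep the proof self-contained. The paper's version is shorter and reuses machinery already set up; yours exposes precisely where skew-decomposability of $F$ enters, and your sum-decomposable case establishes the marginally stronger statement that every sum-decomposable member of $\Avs\left((F\oplus1)\cup(1\oplus F)\right)$ already belongs to $\X$, not merely to $\D\X$.
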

\begin{proof}
 We again easily see that the elements of $\D \X$ must avoid all the
permutations in $1\oplus F$ and $F\oplus 1$. 

Let $\pi\in\Avs((F\oplus 1)\cup (1\oplus F))$, and write $\pi$ as a skew
sum $\pi=\pi_1\ominus\pi_2\ominus\dotsb\ominus\pi_k$ for $k\ge 1$, where each
$\pi_i$ is either equal to 1 or sum-decomposable. We claim that each $\pi_i$
is in $\X$: indeed, if $\pi_i$ is sum-decomposable, each of its summands
must avoid $F$, and since $\X$ is sum-closed, we get $\pi_i\in \X=\Avs(F)$. Thus,
we obtain $\pi\in\D \X$ as claimed.
\end{proof}

\begin{corollary}\label{cor-forb-i}
 If $X$ is a sum-closed permutation class, then $\Obs(\D
\X)=\left(\Obs(\X)\oplus 1\right)\cup\left(1\oplus\Obs(\X)\right)$.
\end{corollary}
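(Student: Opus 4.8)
The plan is to derive the corollary from Lemma~\ref{lem-forb-i} by taking $F=\Obs(\X)$, so the first task is to verify the hypothesis of that lemma. I would argue that when $\X$ is sum-closed every minimal separable obstruction is skew-decomposable: if some $\rho\in\Obs(\X)$ were sum-decomposable, say $\rho=\rho_1\oplus\rho_2$ with $\rho_1,\rho_2$ nonempty, then each $\rho_i$ is a proper subpermutation of $\rho$ and hence lies in $\X$ by minimality of $\rho$; sum-closure would then force $\rho\in\X$, a contradiction. Since $\rho$ is separable and is not the singleton (the singleton lies in $\X$), it must be skew-decomposable. Thus $F=\Obs(\X)$ is a set of skew-decomposable permutations, Lemma~\ref{lem-forb-i} applies, and (using $\X=\Avs(\Obs(\X))$) we obtain
\[
\D\X=\Avs\bigl((\Obs(\X)\oplus 1)\cup(1\oplus\Obs(\X))\bigr).
\]

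Write $G=(\Obs(\X)\oplus 1)\cup(1\oplus\Obs(\X))$, a set of separable permutations. From the identity $\D\X=\Avs(G)$ I would observe that $\Obs(\D\X)$ is precisely the set of $\leq$-minimal elements of $G$: a separable permutation lies outside $\D\X$ exactly when it contains some element of $G$, so a permutation that is minimal with this property must itself coincide with that element of $G$, and be minimal among the elements of $G$. Hence the entire corollary reduces to proving that $G$ is an antichain, in which case $\Obs(\D\X)=G$ as claimed.

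To show $G$ is an antichain I would split into same-side and cross-side comparisons. For the same-side case I first record the embedding fact that $\alpha\oplus 1\leq\beta\oplus 1$ forces $\alpha\leq\beta$: the top-right point of the embedded $\alpha\oplus 1$ either maps to the final point of $\beta\oplus 1$, leaving $\alpha$ inside $\beta$, or maps into $\beta$, giving $\alpha\oplus 1\leq\beta$ and so $\alpha\leq\beta$ either way. Since $\Obs(\X)$ is an antichain, this makes $\Obs(\X)\oplus 1$ an antichain, and symmetrically for $1\oplus\Obs(\X)$. The cross-side case is the main obstacle, and it is where skew-decomposability is essential. To rule out $\sigma\oplus 1\leq 1\oplus\tau$ for $\sigma,\tau\in\Obs(\X)$, I would ask whether the bottom-left minimum of $1\oplus\tau$ lies in the image of the embedding: if not, the copy of $\sigma\oplus 1$ embeds into $\tau$, so $\tau$ contains $\sigma\oplus 1$ and in particular contains $\sigma$, which is impossible for two elements of the antichain $\Obs(\X)$ (comparing sizes excludes the case $\sigma=\tau$); if it does, that point would have to be simultaneously the leftmost and the smallest point of $\sigma\oplus 1$, which cannot happen because $\sigma$ is skew-decomposable, its leftmost point lying in the top block and its minimum in the bottom block. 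The reverse inequality $1\oplus\tau\leq\sigma\oplus 1$ is excluded by the mirror argument using the top-right maximum of $\sigma\oplus 1$ and the rightmost/largest points of $\tau$. This establishes that $G$ is an antichain and completes the proof.
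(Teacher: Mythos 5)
Your proof is correct and follows exactly the route the paper intends: the paper states this as an immediate consequence of Lemma~\ref{lem-forb-i}, applied with $F=\Obs(\X)$ (skew-decomposable because sum-closure forces the obstructions to be sum-indecomposable, hence, being non-singleton separable permutations, skew-decomposable). Your only addition is to spell out the antichain verification for $(\Obs(\X)\oplus 1)\cup(1\oplus\Obs(\X))$, which the paper treats as routine but which is indeed the step where skew-decomposability is used a second time.
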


The results above are sufficient to characterise the minimal obstructions of a representable class, simply by working through the reduced word for a class from right to left (and using the known obstructions for the basic case). For example, consider the class $\C = \bac \D \bca$. 
\begin{align*}
\Obs(\bca) &= \{231\}  & \\
\Obs(\D \bca) &= \{ 1342, 2314 \} & \mbox{(Lemma \ref{lem-forb-i})} &\\
\Obs(\bac \D \bca) &= \Obs( \I \D \bca) \oplus 1& \mbox{(Corollary \ref{cor-forb-bac})} \\
\Obs(\I \D \bca) &= (\Obs(\D \bca) \ominus 1) \cup (1 \ominus \Obs(\D \bca))  & \mbox{(Symmetry of Lemma \ref{lem-forb-i})}\\
 & =  \{ 51342, 12453, 52314, 13425 \} \\
 \Obs(\bac \D \bca) &= \{ 513426, 124536, 523146, 134256 \}.
 \end{align*}

\begin{corollary}
Let $\X$ be a representable class. Then the elements of $\Obs(\X)$ are slim
separable permutations, all of the same size.
\end{corollary}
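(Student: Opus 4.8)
The plan is to prove the statement by induction on the length of the reduced word $w$ representing $\X$, building the word up one letter at a time from the left: writing $w=aw'$ with $a\in\cA$ and $w'$ a (necessarily reduced, being a suffix) shorter word, I would express $\Obs(\X)$ in terms of $\Obs(\cl(w'))$ using Corollaries~\ref{cor-forb-bac} and~\ref{cor-forb-i} together with their symmetric variants. The technical heart is a slimness‑preservation sublemma: if $\sigma$ is slim and sum‑indecomposable then $\sigma\oplus 1$ and $1\oplus\sigma$ are slim, and symmetrically if $\sigma$ is slim and skew‑indecomposable then $\sigma\ominus 1$ and $1\ominus\sigma$ are slim. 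This is immediate because slimness is a purely local condition on the decomposition tree (every internal node is binary with at least one leaf child): when $\sigma$ is sum‑indecomposable its root is a $\ominus$‑node or a single leaf, so its decomposition tree acquires a new $\oplus$‑root whose two children are $T(\sigma)$ and a fresh leaf, and no node of $T(\sigma)$ is disturbed. Each of the four operations $\oplus 1$, $1\oplus$, $\ominus 1$, $1\ominus$ also increases size by exactly $1$, so it preserves the property that all obstructions share a common size.

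For the base cases ($|w|\le 1$) I would check directly that $\Obs(\uni)=\{12,21\}$ and $\Obs(\I)=\{21\}$, $\Obs(\D)=\{12\}$, $\Obs(\bac)=\{213\}$, $\Obs(\cab)=\{312\}$, $\Obs(\bca)=\{231\}$, $\Obs(\acb)=\{132\}$, each a set of slim permutations of one common size. For the inductive step I set $\Y=\cl(w')$ with $w'$ nonempty reduced; then by Lemma~\ref{lem-l0} the class $\Y$ is sum‑closed or skew‑closed, and since $\Y\neq\uni$ it is not both, so exactly one holds. When $\Y$ is sum‑closed its obstructions are sum‑indecomposable, and when $\Y$ is skew‑closed they are skew‑indecomposable; here I would invoke the dichotomy in $\Sep$ that every separable permutation of size at least $2$ is sum‑ or skew‑decomposable, so that a sum‑indecomposable obstruction is automatically skew‑decomposable, and vice versa. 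This is exactly what is needed to meet the hypotheses of Lemmas~\ref{lem-forb-bac} and~\ref{lem-forb-i}, and it guarantees that the operation applied to each obstruction is one for which the slimness sublemma applies.

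The case analysis then runs over the leading letter $a$, reduced by the eightfold symmetry to the representatives $a=\D$ and $a=\bac$. If $a=\D$, reducedness of $aw'$ forces $\Y$ sum‑closed, and Corollary~\ref{cor-forb-i} gives $\Obs(\D\Y)=(\Obs(\Y)\oplus 1)\cup(1\oplus\Obs(\Y))$, to which the sublemma applies directly. The delicate case, which I expect to be the main obstacle, is $a=\bac$ with $\Y$ skew‑closed: here $\Y$ is not sum‑closed, so the clean form of Corollary~\ref{cor-forb-bac} does not apply, and a naive induction that merely peels the first letter stalls because $\I\Y$ need not have a shorter reduced word than $\bac\Y$. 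The resolution is to avoid recursing on $\I\Y$ and instead compose two closed‑form operations: the general part of Corollary~\ref{cor-forb-bac} gives $\Obs(\bac\Y)=\Obs(\I\Y)\oplus 1$, while the symmetric form of Corollary~\ref{cor-forb-i} applied to the skew‑closed $\Y$ computes $\Obs(\I\Y)=(\Obs(\Y)\ominus 1)\cup(1\ominus\Obs(\Y))$ directly from $\Obs(\Y)$. Thus each obstruction of $\bac\Y$ has the form $(f\ominus 1)\oplus 1$ or $(1\ominus f)\oplus 1$ for some $f\in\Obs(\Y)$; applying the sublemma twice, first with $\ominus 1$ or $1\ominus$ to the skew‑indecomposable $f$, and then with $\oplus 1$ to the resulting skew‑decomposable (hence sum‑indecomposable) permutation, shows that these are slim and of the common size $|f|+2$. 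The remaining leading letters $\cab$, $\bca$, $\acb$ and $\I$ follow by symmetry, which completes the induction.
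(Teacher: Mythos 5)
Your proof is correct and takes essentially the same route as the paper: the paper obtains this corollary precisely by iterating through the reduced word from right to left using Lemma~\ref{lem-forb-bac}, Corollary~\ref{cor-forb-bac}, Lemma~\ref{lem-forb-i}, Corollary~\ref{cor-forb-i} and their symmetries (as in its worked example for $\bac\D\bca$), and your slimness-preservation sublemma together with the composite step $\Obs(\bac\Y)=\Obs(\I\Y)\oplus 1$ with $\Obs(\I\Y)$ computed in closed form from the skew-closed $\Y$ is exactly the detail the paper leaves implicit. The only thing to spell out is the remaining subcase $a=\bac$ with $\cl(w')$ sum-closed, which the clean form of Corollary~\ref{cor-forb-bac} plus your sublemma dispatches immediately.
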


\begin{corollary}\label{cor-slimrepre}
For every slim permutation $\pi$, the class $\Avs(\pi)$ is representable. Moreover, if $\pi$ has size at least 3, then $\Avs(\pi)$ has a representation whose every symbol is one of $\{\bac,\acb,\cab,\bca\}$. Conversely, for every representable class, $\C = \cl(c)$, defined by a word, $c$, over the alphabet $\{\bac,\acb,\cab,\bca\}$, $\Obs(\C)$ is a single slim permutation whose length is two more than the number of characters in $c$.
\end{corollary}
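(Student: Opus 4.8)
The plan is to read the three assertions as one size- and length-preserving correspondence between slim separable permutations and their avoidance classes, implemented by the obstruction-transformation results (Lemma~\ref{lem-forb-bac}, Lemma~\ref{lem-forb-i}, and their symmetric variants) applied one extremal point at a time. The dictionary is that each corner letter records one of the four ways of adjoining a single extremal point: reading Lemma~\ref{lem-forb-bac} as a statement about obstructions, if $\X$ is sum-closed with $\Obs(\X)=\{\tau\}$ and $\tau$ skew-decomposable, then $\Obs(\bac\X)=\{\tau\oplus 1\}$, so $\bac$ adjoins a new maximum at the right; by symmetry $\bca$, $\cab$, $\acb$ adjoin the extremal point according to $\tau\mapsto\tau\ominus 1$, $\tau\mapsto 1\ominus\tau$ and $\tau\mapsto 1\oplus\tau$.

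For the forward assertions I would induct on $|\pi|$. A slim permutation $\pi$ of size $n\ge 3$ is obtained from a slim permutation $\pi^-$ of size $n-1$ by adjoining a single extremal point, so that $\pi$ is one of $\pi^-\oplus 1$, $\pi^-\ominus 1$, $1\ominus\pi^-$, $1\oplus\pi^-$; the outermost operation is determined by the root of the decomposition tree, and its type ($\oplus$ or $\ominus$) is opposite to the type of the root of $\pi^-$, since in a decomposition tree a $\oplus$-node has only $\ominus$-nodes and leaves as children and vice versa. By induction $\Avs(\pi^-)=\cl(c^-)$ for a corner word $c^-$, and the relevant transformation lemma, read backwards, prepends the matching corner letter to give $\Avs(\pi)=\cl(c_1 c^-)$; the length bookkeeping gives $|c_1 c^-|=|\pi|-2$. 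The base case is size $3$, where $\Avs(132)=\acb$, $\Avs(213)=\bac$, $\Avs(231)=\bca$, $\Avs(312)=\cab$ are single corner letters. This yields both representability and, for $|\pi|\ge 3$, a purely corner representation; the letters $\I$ and $\D$ are needed only for the two slim permutations of size $2$, whose classes are $\D=\Avs(12)$ and $\I=\Avs(21)$.

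For the converse I would run the induction in the other direction, on the length of $c=c_1 c^-$, with $\Obs(\cl(c^-))=\{\pi^-\}$ a single slim permutation of size $|c^-|+2$ by the inductive hypothesis. To prepend $c_1$ through the appropriate transformation lemma I must know that $\cl(c^-)$ carries the one-sided closure the lemma requires: $\bac$ and $\acb$ require $\cl(c^-)$ to be sum-closed, while $\bca$ and $\cab$ require it to be skew-closed. By Lemma~\ref{lem-l0} this is governed by the first letter of $c^-$, and the crucial point is that this closure is furnished exactly when the sum-type letters $\{\bac,\acb\}$ and the skew-type letters $\{\bca,\cab\}$ alternate along $c$ — which is precisely the coherence of the words produced in the forward direction, since the outermost operations of a slim permutation alternate in type. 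Remarkably, the very same condition is what keeps the decomposition tree slim: adjoining, say, a right maximum is compatible with slimness iff the current root is a $\ominus$-node, i.e. iff $\cl(c^-)$ is sum-closed. For the corner words under consideration the transformation lemma therefore applies at every stage, producing $\Obs(\cl(c))=\{\pi\}$ with $\pi$ the single slim permutation of size $|c|+2$ obtained from $\pi^-$ by the corresponding extremal adjunction; together with the forward map this exhibits obstruction-taking and the construction of the ``moreover'' part as mutually inverse bijections between slim permutations of size at least $3$ and these corner words.

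The step I expect to be the crux is exactly this closure-compatibility check inside the converse: the transformation lemmas return a single obstruction only when the class being inflated already carries the correct one-sided closure, so the heart of the argument is to verify, via Lemma~\ref{lem-l0}, that successive corner letters supply this closure at every stage — equivalently, that the sum- and skew-type letters alternate in lockstep with the alternating node types forced by slimness. Once this is in place, the uniqueness of the obstruction, its slimness, and the size identity $|c|+2$ all follow from the dictionary, and the only remaining routine points are the size-$3$ base case and the observation that adjoining an extremal point to a slim permutation again produces a slim permutation.
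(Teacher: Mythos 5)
Your overall route is the paper's: the corollary appears there without an explicit proof, as a direct consequence of Lemma~\ref{lem-forb-bac}, Lemma~\ref{lem-forb-i}, their corollaries and symmetric variants, applied to the word one letter at a time (``working through the reduced word for a class from right to left''), and your induction that peels one extremal point per corner letter is exactly that procedure made explicit. Your dictionary is correct ($\bac$ corresponds to $\tau\mapsto\tau\oplus1$, $\acb$ to $\tau\mapsto 1\oplus\tau$, $\bca$ to $\tau\mapsto\tau\ominus1$, $\cab$ to $\tau\mapsto 1\ominus\tau$), and the forward induction is sound: the hypothesis of the transformation lemma, that the obstruction $\pi^-$ be skew- (resp.\ sum-) decomposable, holds because the non-leaf child of the root of a slim tree has the opposite root type, and your base cases are right.

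In the converse, however, you have put your finger on a real issue and then waved it away. You observe that the closure needed to prepend each letter is available exactly when sum-type letters $\{\bac,\acb\}$ and skew-type letters $\{\bca,\cab\}$ alternate along $c$, and then assert that ``for the corner words under consideration the transformation lemma therefore applies at every stage''. But the statement quantifies over \emph{every} word over $\{\bac,\acb,\cab,\bca\}$, and such words need not alternate: $\bac\bac$ is a legitimate word, and it is even reduced, since no rewrite rule of Observation~\ref{obs-red} involves two corner letters. For it your closure check genuinely fails: $\bac$ is skew-closed, not sum-closed, so one must use the second clause of Corollary~\ref{cor-forb-bac}, $\Obs(\bac\bac)=\Obs(\I\bac)\oplus 1$, and the interpolated $\I$-step doubles the obstruction set: $\Obs(\I\bac)=\{3241,4213\}$, hence $\Obs(\bac\bac)=\{32415,42135\}$ --- two slim permutations of size $5$, not one of size $4=|c|+2$. (One can verify directly that $32415\notin\bac[\bac]$ while each of its one-point deletions lies in $\bac[\bac]$.) So the alternation condition you isolate is not a verification to be discharged but an essential extra hypothesis: your argument proves the converse exactly for alternating corner words, where it does give the bijection with slim permutations of size at least $3$ that you describe, and this is the mathematically correct scope --- the corollary as literally stated is false for non-alternating words such as $\bac\bac$. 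If you write this up, state alternation as part of the converse rather than claiming it holds for all the words in question. (A trivial further caveat shared with the paper: the size-$1$ slim permutation has $\Avs(1)=\{\emptyset\}$, which is not representable, so the first assertion should implicitly start at size $2$.)
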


\section{Unsplittable subclasses of the separable permutations}

By Proposition 2.7 of \cite{Jelinek2015Splittings-and-} and its symmetries, the classes \acb, \bac, \bca, and \cab are all unsplittable. Combined with Lemma \ref{lem-infl} this implies that all the representable classes are unsplittable. The aim of this section is to prove the converse - that any non-representable class is splittable. The only proper subclasses of \D and \I are finite, and hence splittable, so the first non-trivial result of this type is considered in the next lemma. Recall that $\La = \I \D$ is the class of layered permutations.

%

\begin{lemma}
\label{lem-sub-bca-splittable}
Every proper subclass of $\bca$ is $\La$-splittable.
\end{lemma}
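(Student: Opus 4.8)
The plan is to reduce the whole statement to a single boundedness assertion. Since a subclass of an $\La$-splittable class is again $\La$-splittable, and since any proper subclass $\C\subsetneq\bca$ satisfies $\C\subseteq\bca\cap\Av(\beta)$ for some $\beta\in\bca\setminus\C$, it suffices to prove that $\bca\cap\Av(\beta)$ is $\La$-splittable for every single $\beta\in\bca$. For $\pi\in\bca$ let $f(\pi)$ be the least $k$ such that $\pi$ lies in the $k$-fold merge $\bigodot_{i=1}^{k}\La$. Because every subpermutation of $\pi$ already avoids $231$, a colour class of $\pi$ belongs to $\La=\Av(231,312)$ exactly when it avoids $312$; thus $f(\pi)$ is the least number of colours so that no colour class contains a $312$. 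The lemma then amounts to showing that $f$ is \emph{bounded} on $\bca\cap\Av(\beta)$, since a bound $f\le k$ is literally the inclusion $\bca\cap\Av(\beta)\subseteq\bigodot_{i=1}^{k}\La$.

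To control $f$ I would first isolate the extremal permutations. Set $Q_1=1$ and $Q_d=1\ominus(Q_{d-1}\oplus Q_{d-1})$; each $Q_d$ lies in $\bca$, being an apex placed above a sum of $231$-avoiders. The first key step is a clean universality statement: \emph{$Q_d$ contains every $231$-avoider of size at most $d$}. This goes by induction on $d$ using the canonical peel of a $231$-avoider $\tau$ at its maximum, $\tau=\lambda\oplus(1\ominus\rho)$ with $\lambda,\rho\in\bca$. If $\tau$ is sum-decomposable, its two summands have size $<d$ and embed into the two copies in $Q_{d-1}\oplus Q_{d-1}$; if $\tau$ is sum-indecomposable, its maximum is its first entry, so $\tau=1\ominus\rho$ with $|\rho|<d$, and we map the maximum to the apex of $Q_d$ and embed $\rho$ into one copy of $Q_{d-1}$. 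In particular $\beta\le Q_{|\beta|}$, so the class $\C\subseteq\bca\cap\Av(\beta)$ in fact avoids $Q_{|\beta|}$.

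The heart of the argument is the complementary bound: \emph{if $\pi\in\bca$ avoids $Q_d$ then $f(\pi)\le d-1$}. I would prove this (together with the matching $f(Q_d)=d$) by analysing the same peel $\pi=\lambda\oplus(1\ominus\rho)$. The $312$-patterns of $\pi$ are those inside $\lambda$, those inside $\rho$, and those formed by the apex together with an ascent of $\rho$; hence the conflict hypergraph is a disjoint union of a $\lambda$-part and a $(1\ominus\rho)$-part, giving $f(\pi)=\max\bigl(f(\lambda),\,f(1\ominus\rho)\bigr)$ with $f(1\ominus\rho)\in\{f(\rho),f(\rho)+1\}$. The extra colour for the apex is forced precisely when $\rho$ is \emph{saturated}, meaning that no optimal colouring of $\rho$ leaves a decreasing (hence ascent-free) colour class that the apex could join. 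This is exactly why $f(Q_d)=d$: the doubled block $Q_{d-1}\oplus Q_{d-1}$ is saturated, since a decreasing class can only sit inside one of the two value-separated copies and removing it leaves a full copy of the hard part intact.

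The main obstacle is this saturation analysis, in the precise form \emph{$f(\rho)=d-1$ and $\rho$ saturated $\Longrightarrow Q_{d-1}\oplus Q_{d-1}\le\rho$}. Granting it, the induction closes: for $Q_d$-avoiding $\pi$ the pieces $\lambda,\rho$ also avoid $Q_d$ and satisfy $f(\lambda),f(\rho)\le d-1$, and whenever the apex would force a $d$-th colour we recover $Q_{d-1}\oplus Q_{d-1}\le\rho$, hence $Q_d=1\ominus(Q_{d-1}\oplus Q_{d-1})\le 1\ominus\rho\le\pi$, a contradiction. I expect to establish the saturation implication by its contrapositive: writing $\rho=\rho_1\oplus\cdots\oplus\rho_t$, if $Q_{d-1}\oplus Q_{d-1}\not\le\rho$ then at most one summand can contain $Q_{d-1}$, and one can then assemble an optimal colouring of $\rho$ whose first class is decreasing, so $\rho$ is not saturated. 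Finally, combining the two bounds: $\C$ avoids $Q_{|\beta|}$, so $f\le|\beta|-1$ on $\C$, whence $\C\subseteq\bigodot_{i=1}^{|\beta|-1}\La$ and $\C$ is $\La$-splittable.
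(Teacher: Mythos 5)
Your argument is correct, but it takes a genuinely different route from the paper. The paper proves Lemma~\ref{lem-sub-bca-splittable} by a minimal-counterexample argument: using Higman's Lemma it selects a proper subclass $\C$ of $\bca$ minimal among the non-$\La$-splittable ones, takes a minimal $\sigma\in\bca\setminus\C$, argues $\sigma$ must be slim (otherwise Lemma~\ref{lem-slim}, which rests on the imported Lemmas~\ref{lem-abc} and~\ref{lem-1p}, splits $\C$ into proper subclasses that are $\La$-splittable by minimality), and then disposes of the four slim shapes $\rho\oplus 1$, $1\oplus\rho$, $\rho\ominus 1$, $1\ominus\rho$ via explicit merges such as $\C\subseteq\D\odot\Av(\rho)$ or $\C\subseteq\La\odot\I\Av(\tau)$, each contradicting minimality. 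You replace all of that machinery by an extremal construction: the universal permutations $Q_d$, the observation that $\La$-membership of a colour class inside $\bca$ is just $312$-avoidance, and the two bounds $\beta\le Q_{|\beta|}$ and ``$Q_d$-avoidance implies $f\le d-1$''. What your approach buys is effectivity and self-containedness: it yields the explicit inclusion $\bca\cap\Av(\beta)\subseteq\odot_{i=1}^{|\beta|-1}\La$, a quantitative bound on the number of merge factors that the paper's well-foundedness argument cannot produce, and it needs neither Higman's Lemma nor the splitting lemmas imported from the earlier paper. What the paper's approach buys is uniformity: its slim-basis-element case analysis is the template reused later (notably in Lemma~\ref{lem-repre}), whereas the $Q_d$ construction is tailored to $\bca$.

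One step should be written out more carefully, though it does go through. In the saturation analysis, after writing $\rho=\rho_1\oplus\dotsb\oplus\rho_t$ and noting that at most one summand contains $Q_{d-1}$, the ``assembly'' needs two ingredients you leave implicit: (i) colouring the summands that avoid $Q_{d-1}$ with $d-2$ colours is exactly the bound at level $d-1$, so the saturation claim must sit inside the induction on $d$ (level $d-1$ bound $\Rightarrow$ level $d$ saturation claim $\Rightarrow$ level $d$ bound — there is no circularity, but the order matters); and (ii) the unique summand containing $Q_{d-1}$ is sum-indecomposable, hence a singleton or of the form $1\ominus\rho'$, and in \emph{any} legal colouring of such a permutation the class of the apex is automatically decreasing, since the apex together with an increasing pair in its class would form a $312$. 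Point (ii) is what furnishes the decreasing class for the assembly without further recursion; note also that a decreasing class can never straddle two summands of a sum, which is why it must live inside that special summand and why its uniqueness matters. With these two points made explicit, your induction closes exactly as you describe.
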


\begin{proof}
Suppose to the contrary that there exist proper subclasses of $\bca$ which are not $\La$-splittable. It is well known (and follows easily by an application of Higman's Lemma~\cite{Higman1952Ordering-by-div}) that the partial ordering of $\Sep$ under permutation containment is a well quasiorder and, as a consequence the containment relation between subclasses of $\Sep$ is well-founded. So, among the non $\La$-splittable proper subclasses of $\bca$ we may choose one, $\C$, minimal with respect to containment.

Choose a minimal permutation $\sigma \in \bca \setminus \C$ (i.e., a basis element of $\C$ relative to $\bca$). If $\sigma$ is not slim, then $\C$ is splittable, and since all proper subclasses of $\C$ are $\La$-splittable, so is $\C$, a contradiction. 

Suppose that $\sigma = \rho \oplus 1$, and let $\pi \in \C$. The elements of $\pi$ less than its final element avoid $\rho$, while those greater than or equal to its final element are decreasing (since $\pi \in \bca$). Hence $\pi \in \D \merge \Av(\rho)$ so $\C$ is $\{ \D, \Av(\rho) \}$-splittable, and once again we have a contradiction. Similarly, if $\sigma = 1 \oplus \rho$ then the elements up to and including $\pi$'s minimum are decreasing, and those after avoid $\rho$, so again we have $\{ \D, \Av(\rho) \}$-splittability and a contradiction.

If $\sigma = \rho \ominus 1$ then, because $\sigma \in \bca$ and $\sigma$ is slim, $\sigma = 21$ and $\C \subseteq \I$, clearly a contradiction.

Finally suppose that $\sigma = 1 \ominus \rho$, where $\rho = 1 \oplus \tau$ or $\rho = \tau \oplus 1$. Let $\pi \in \C$ be sum-indecomposable, i.e., since $\pi \in \bca$, $\pi$ begins with its maximum element. The remainder of $\pi$ avoids $\rho$ so, by the previous argument, it belongs to $\D \merge \Av(\tau)$. Therefore, $\pi \in \D \merge \Av(\tau)$ as well. An arbitrary permutation in $\C$ is the sum of its sum-indecomposable parts and is therefore in $\La \merge \I \Av(\tau)$. But $\C \not \subseteq \I \Av(\tau)$ since $1 \ominus \tau \in \C$. Therefore, by assumption $\C \cap \I \Av(\tau)$ is $\La$-splittable, and hence so is $\C$, our final contradiction.
\end{proof}

The plan from here is to continue essentially inductively. To proceed  we need another technical lemma.

\begin{lemma}\label{lem-inters}
Let $\X$ and $\Y$ be permutation classes. Then $(\D \X)\cap(\D \Y)=\D(\X \cap \Y)$
and $(\bac \X)\cap (\bac \Y)=\bac (\X \cap \Y)$.
\end{lemma}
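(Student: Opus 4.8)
The two identities are proved by the same strategy, so I will describe a common plan and flag where the $\bac$ case demands extra work. In both cases the inclusion $\supseteq$ is immediate, since inflation is monotone in its inner argument: $\X\cap\Y$ is contained in both $\X$ and $\Y$, whence $\D(\X\cap\Y)\subseteq(\D\X)\cap(\D\Y)$ and likewise for $\bac$. The substance is the reverse inclusion, and the plan is to reduce each identity to a \emph{component characterisation} of membership that is visibly independent of the inner class. Concretely, I would show that for any class $\Z$ there is a set $B(\pi)$ of permutations, depending only on $\pi$ (and on whether we use $\D$ or $\bac$), such that $\pi\in\D\Z$ (resp.\ $\pi\in\bac\Z$) if and only if $B(\pi)\subseteq\Z$. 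Granting this, all three classes in each identity are governed by the \emph{same} set $B(\pi)$, and the reverse inclusion is one line: $\pi\in(\D\X)\cap(\D\Y)$ iff $B(\pi)\subseteq\X$ and $B(\pi)\subseteq\Y$ iff $B(\pi)\subseteq\X\cap\Y$ iff $\pi\in\D(\X\cap\Y)$.

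For the $\D$ identity the set $B(\pi)$ is simply the set of skew-indecomposable components of $\pi$. This works because the decomposition $\pi=\alpha_1\ominus\dotsb\ominus\alpha_m$ into skew-indecomposables is unique and is the finest skew-sum decomposition: if $\pi=\tau_1\ominus\dotsb\ominus\tau_k$ with each $\tau_i\in\Z$, then each $\alpha_j$ is a pattern of some $\tau_i$ and so lies in $\Z$ by downward closure, while conversely if every $\alpha_j\in\Z$ then $\pi$ is their skew-sum and lies in $\D\Z$.

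For $\bac$ I would first record that $\bac\Z$ is always skew-closed: since $\bac$ is skew-closed we have $\D\bac=\bac$, and associativity of inflation gives $\D(\bac\Z)=(\D\bac)\Z=\bac\Z$. Hence membership in $\bac\Z$ is controlled by the skew-indecomposable components via the $\D$-characterisation just proved, and it suffices to characterise $\pi\in\bac\Z$ for skew-indecomposable $\pi$. Such a $\pi$ of size at least two is sum-decomposable, say $\pi=\beta_1\oplus\dotsb\oplus\beta_l$ with $l\ge 2$, and I would prove the recursion
\[
\pi\in\bac\Z \iff \beta_1,\dotsc,\beta_{l-1}\in\Z \ \text{ and }\ \beta_l\in\bac\Z,
\]
setting $B(\pi)=\{\beta_1,\dotsc,\beta_{l-1}\}\cup B(\beta_l)$, with $B(1)=\{1\}$ and $B$ extended as a union over skew-components in the skew-decomposable case. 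The engine is an analysis of the skeleton: in any representation $\pi=\rho[\tau_1,\dotsc,\tau_k]$ with $\rho\in\bac$, the skeleton $\rho$ cannot be sum-indecomposable of size $\ge 2$, because such a $\rho$ is skew-decomposable and would force $\pi$ itself to be skew-decomposable; and if $\rho$ is sum-decomposable then all but its last sum-indecomposable component must be singletons, since any earlier non-singleton component contains $21$, which together with a later larger point produces a forbidden $213$ in $\rho$. This pins $\beta_l$ as the inflation of the final sum-indecomposable part of $\rho$ and the earlier $\beta_i$ as inflations of single points.

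I expect the skeleton analysis in the $\bac$ case to be the main obstacle: one must argue that \emph{every} representation $\pi=\rho[\tau_1,\dotsc,\tau_k]$ with $\rho\in\bac$ has this leading-singletons form, and then verify the converse direction of the recursion by exhibiting the explicit $213$-avoiding skeleton $1^{\oplus(l-1)}\oplus\rho'$ built from a skeleton $\rho'$ of $\beta_l$. Once the recursion is in hand, the final assembly is a routine induction on $|\pi|$ (after the skew-closed reduction): applying the recursion for $\X$ and for $\Y$ simultaneously yields $\beta_1,\dotsc,\beta_{l-1}\in\X\cap\Y$ and $\beta_l\in(\bac\X)\cap(\bac\Y)$, and invoking the inductive hypothesis on the strictly smaller $\beta_l$ together with the $\Leftarrow$ direction of the recursion over $\X\cap\Y$ closes the argument.
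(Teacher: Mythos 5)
Your treatment of the $\D$ identity and your reduction of the $\bac$ identity to skew-indecomposable permutations (via $\D\bac=\bac$ and associativity) are both fine, and your recursion for sum-decomposable $\pi$ is correct, including the converse skeleton $1\oplus\dotsb\oplus 1\oplus\rho'$. The gap is the sentence ``such a $\pi$ of size at least two is sum-decomposable.'' That is true for separable permutations, but the lemma is stated for \emph{arbitrary} permutation classes $\X$ and $\Y$, so $\bac\X$ need not be contained in $\Sep$: for instance, if $2413\in\X$ then $2413\in\bac\X$ (inflate a singleton skeleton), and $2413$ is both sum- and skew-indecomposable. For such $\pi$ your case analysis does not apply and $B(\pi)$ is left undefined; the same hole reappears inside your recursion, since $B(\beta_l)$ is only defined when $\beta_l$ is a singleton or skew-decomposable. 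Consequently the one-line intersection argument cannot be run on, e.g., $\pi=2413\in(\bac\X)\cap(\bac\Y)$.

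The omission is easy to repair with an idea you already use for skeletons: if $\pi$ is both sum- and skew-indecomposable of size at least two, then in any representation $\pi=\rho[\tau_1,\dotsc,\tau_k]$ with $\rho\in\bac$ the skeleton $\rho$ is separable, so if $|\rho|\ge 2$ it is sum- or skew-decomposable and would force the same on $\pi$; hence $\rho=1$, so $\pi\in\bac\Z$ if and only if $\pi\in\Z$, i.e., $B(\pi)=\{\pi\}$. This is precisely the case the paper's induction handles explicitly (``if $\pi$ is neither sum-decomposable nor skew-decomposable, then it belongs to $\X\cap\Y$ and we are done''). With that case added, your proof is complete and is in substance the paper's argument repackaged: the paper also reduces along the skew decomposition (using skew-closedness of $\bac(\X\cap\Y)$) and peels off the first sum-indecomposable component by the same ``otherwise the skeleton contains $213$'' analysis; your class-independent membership set $B(\pi)$ just makes the final intersection step formally immediate.
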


\begin{proof}
Clearly, if $\pi$ is a permutation from $\D( \X \cap \Y)$, then $\pi$ belongs to
both $\D \X$ and $\D \Y$. To prove the converse, let $\pi \in 
(\D \X)\cap(\D \Y)$. Write $\pi$
as $\pi=\pi_1 \ominus \pi_2 \ominus \dotsb \ominus \pi_k$, where each $\pi_i$
is a skew-indecomposable permutation. Then each $\pi_i$ must belong
to $\X \cap \Y$, and so $\pi$ is in $\D(\X \cap \Y)$, as claimed. This proves the
first identity.

To prove the second one, we again easily observe that $\bac(\X \cap \Y)$ is a
subclass of $(\bac \X) \cap (\bac \Y)$. 

We now proceed by induction on the length of a permutation $\pi \in (\bac \X) \cap (\bac \Y)$ to show that it belongs to $\bac(\X \cap \Y)$. The case $\pi=1$ is trivial. Moreover, if $\pi$ is neither sum-decomposable nor skew-decomposable, then it belongs to $\X \cap \Y$ and we are done. Suppose that
$\pi=\pi_1\ominus\pi_2$, for some $\pi_1$ and $\pi_2$. By induction, both
$\pi_1$ and $\pi_2$ belong to $\bac(\X \cap \Y)$, and so does $\pi$ itself, since
$\bac(\X \cap \Y)$ is skew-closed. 

Finally, suppose that $\pi$ has the form $\pi_1 \oplus \pi_2$, with $\pi_1$ being sum-indecomposable. Since $\pi$ is in $\bac \X$, it was obtained by inflating a permutation $\rho\in\bac$ by elements of~$\X$. Note that the permutation $\pi_1$ was obtained by inflating a single element of $\rho$, since otherwise $\rho$
would contain the pattern~$213$. In particular, $\pi_1$ belongs to $\X$, and by
an analogous argument, it belongs to $\Y$ as well. By induction, $\pi_2$ is in
$\bac(\X \cap \Y)$, and in particular, it can be obtained by inflating a
permutation $\tau \in \bac$ by elements of $\X \cap \Y$. Then $\pi$ itself can be
obtained by inflating the permutation $1\oplus \tau \in \bac$ by elements of
$\X \cap \Y$, and so $\pi$ belongs to~$\bac(\X \cap \Y)$, as claimed.
\end{proof}

The essential part of the remainder of the proof is to show that any proper subclass of a
representable class $\Y$ has a splitting into representable proper subclasses
of~$\Y$. The key step of the proof is presented below as a lemma.

\begin{lemma}
\label{lem-repre} 
Let $\Y$ be a representable class represented by the reduced word $y=y_1 y_2\dotsb y_n$, and let $\sigma\in \Y$ be a slim permutation. Let $\X$ be the class $\Y \cap \Avs(\sigma)$. Then there exist representable classes $\Y_1,\dotsc, \Y_k$ that are proper subclasses of $\Y$, and such that $\X$ is $\{\Y_1,\dotsc,\Y_k\}$-splittable.
\end{lemma}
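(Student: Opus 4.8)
The plan is to induct on $|y|+|\sigma|$. Since $\Avs(\sigma)$ is itself representable (Corollary~\ref{cor-slimrepre}), $\X$ is the intersection of two representable classes, and I will repeatedly use the elementary fact that if $\Avs(\sigma)\subseteq\A\merge\B$ then $\X=\Y\cap\Avs(\sigma)\subseteq(\Y\cap\A)\merge(\Y\cap\B)$, because each colour class of a permutation of $\Y$ is a subpermutation of it and hence again lies in $\Y$. The three workhorses will be Observation~\ref{obs-inf} (to lift a splitting of $\Y^{-}$ through the inflation $y_1[\,\cdot\,]$), Lemma~\ref{lem-inters} (to cancel a common leading letter), and the peeling Lemmas~\ref{lem-lA},~\ref{lem-lB},~\ref{lem-notI} (to pass between $\Y$ and $\Y^{-}$). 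Exactly as in the proof of Theorem~\ref{thm-order}, the eight symmetries permute $\cA$ and preserve reducedness, slimness, representability, properness and the merge operation, so I may assume $y_1\in\{\I,\bac\}$.

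Take first $y_1=\I$, so $\Y=\I[\Y^{-}]$ is sum-closed and its sum-components are exactly the elements of $\Y^{-}$. If $\sigma$ is sum-indecomposable (this covers all skew-decomposable and all simple $\sigma$, since a permutation cannot be both sum- and skew-decomposable), then a sum $\mu_1\oplus\dotsb\oplus\mu_t$ of $\Y^{-}$-elements avoids $\sigma$ iff each $\mu_i$ does, so $\X=\I[\Y^{-}\cap\Avs(\sigma)]$; by Lemma~\ref{lem-lA} we have $\sigma\in\Y^{-}$, so the inductive hypothesis splits $\Y^{-}\cap\Avs(\sigma)$ into proper representable subclasses of $\Y^{-}$ (which, being arranged to avoid $\sigma$, lift through Observation~\ref{obs-inf} to \emph{proper} representable subclasses of $\Y$). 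If instead $\sigma=1\oplus\sigma'$ (the case $\sigma'\oplus1$ being symmetric), then $\sigma'$ is sum-indecomposable and one checks that $\pi$ avoids $\sigma$ iff its first summand avoids $\sigma$ and the remaining summands avoid $\sigma'$; colouring the first summand red and the rest blue gives $\X\subseteq\Y^{-}\merge\bigl(\Y\cap\Avs(\sigma')\bigr)$, where $\Y^{-}$ is a proper representable subclass of $\Y$ and $\Y\cap\Avs(\sigma')$ is split by the inductive hypothesis (now $|\sigma'|<|\sigma|$). Associativity of $\merge$ then yields the desired splitting of $\X$.

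The substantial case is $y_1=\bac$, where $\Y=\bac[\Y^{-}]$ is skew-closed. Here I would organise the argument by the shape of the top of $\sigma$'s decomposition tree, equivalently by the leading letter of the word representing $\Avs(\sigma)$. When $\sigma=\tau\oplus1$ this letter is again $\bac$: Lemma~\ref{lem-inters} gives $\cl(\bac\,y^{-})\cap\cl(\bac\,c^{-})=\bac\bigl(\Y^{-}\cap\Avs(\tau)\bigr)$, Lemma~\ref{lem-lB} shows $\tau\in\Y^{-}$, and the inductive hypothesis together with Observation~\ref{obs-inf} finishes. In each of the remaining shapes of $\sigma$ I would reduce the obstruction by one unit: writing $\pi\in\Y$ along whichever of $\oplus,\ominus$ matches the top of $\sigma$, a sum- (resp.\ skew-) indecomposable $\sigma$ meets exactly one component, so $\pi$ avoids $\sigma$ iff one boundary component avoids $\sigma$ and everything beyond it avoids the strictly smaller pattern $\tau$. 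Colouring ``everything beyond'' blue places it in $\Y\cap\Avs(\tau)$, which the inductive hypothesis splits.

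The hard part is the remaining (red) boundary component. Unlike the $y_1=\I$ case, this component is skew-indecomposable and lies in all of $\Y$ rather than in the proper subclass $\Y^{-}$, so I cannot simply dump it into a proper class. The resolution I would pursue exploits the special structure of $\bac=\Av(213)$: a skew-indecomposable element of $\bac[\Y^{-}]$ is sum-decomposable, and (via the analysis behind Lemma~\ref{lem-lB}) its leading sum-block is forced to be increasing, so the boundary component itself splits into proper representable pieces. The prototype is the identity $\bac\cap\cab=\{\text{unimodal permutations}\}\subseteq\I\merge\D$, with $\I$ and $\D$ proper subclasses of $\bac$; the general mismatched case should degenerate to colourings of this flavour at the boundary, combined with the obstruction-reduction above. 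The two genuine dangers to control are (i) \emph{properness} --- every class produced must be a proper subclass of $\Y$, which I would guarantee by arranging that each piece still avoids $\sigma$ (or the relevant sub-pattern $\tau$), so that it omits a specific permutation of $\Y$ --- and (ii) \emph{well-foundedness}, i.e.\ that the interleaved recursions (decreasing $|y|$ through Observation~\ref{obs-inf} and Lemma~\ref{lem-inters}, and decreasing $|\sigma|$ through the component reduction) terminate under the measure $|y|+|\sigma|$.
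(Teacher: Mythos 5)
Your treatment of the sum-closed case ($y_1=\I$) is sound, and in places cleaner than the paper's (the paper works with the transversal $\{\D,\bac\}$ and splits at the leftmost element rather than along sum components); your sub-case $\sigma=\tau\oplus1$ of the $y_1=\bac$ case also matches the paper's argument (Lemma~\ref{lem-forb-bac} plus Lemma~\ref{lem-inters}, with Lemma~\ref{lem-lB} giving $\tau\in\Y^-$). But there is a genuine gap exactly where you say ``the hard part'' is: for $y_1=\bac$ and $\sigma$ of the form $1\ominus\rho$, $\rho\ominus1$ or $1\oplus\rho$, splitting along the skew decomposition of $\pi$ leaves a boundary component that is skew-indecomposable, avoids $\sigma$, and lies in all of $\Y$, and you have no mechanism to place it into proper subclasses. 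The resolution you sketch rests on a false claim: a skew-indecomposable element of $\bac[\Y^-]$ of size at least two is indeed sum-decomposable, but its leading sum-block is \emph{not} forced to be increasing. For instance $2143=21\oplus21$ is a skew-indecomposable element of $\cl(\bac\D)$ (a reduced word) whose leading block is $21$; the ``increasing skeleton'' phenomenon in Lemma~\ref{lem-lB} concerns permutations ending in their maximum, not arbitrary skew-indecomposable elements. The appeal to $\bac\cap\cab\subseteq\I\merge\D$ and the phrase ``should degenerate to colourings of this flavour'' do not substitute for an argument; note also that recursing into the boundary component's sum decomposition just reproduces the same problem one level down, with neither $|y|$ nor $|\sigma|$ decreasing.

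The paper escapes precisely this corner with an idea absent from your proposal: it does not split along the skew decomposition at all, but splits $\pi$ \emph{positionally at its maximum} (for $\sigma=1\ominus\rho$; at the rightmost element for $\sigma=\rho\ominus1$; and at the rightmost element of each skew component for $\sigma=1\oplus\rho$). By Corollary~\ref{cor-forb-bac}, $\Obs(\bac\Y^-)=\Obs(\I\Y^-)\oplus1$, so everything to the left of the maximum of $\pi$ must lie in $\I\Y^-$ (an obstruction there, together with the maximum, would produce an obstruction of $\Y$ inside $\pi$), while everything to the right of the maximum avoids $\rho$ because $\pi$ avoids $1\ominus\rho$. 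This exhibits $\pi$ as a merge of elements of $\I\Y^-$ (or $\D\I\Y^-$), $\uni$ (or $\D$), and $\Y\cap\Avs(\rho)$: the first two are proper representable subclasses of $\Y$ by Theorem~\ref{thm-order}, and the last is handled by induction on $|\sigma|$, so no problematic boundary component ever appears. (A smaller point: your properness mechanism, ``arrange that each piece still avoids $\sigma$,'' also does not work as stated, since the classes produced by the inductive hypothesis need not avoid $\sigma$; properness of the lifted classes such as $\I\Z_i$ or $\bac\Z_i$ inside $\Y$ is obtained in the paper from strict domination of reduced words via Theorem~\ref{thm-order}.)
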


\begin{proof}
We proceed by induction, firstly over the possible values of $n$, i.e., the length of~$y$, and secondly for a given $n$, over the size of $\sigma$. Note that the case of $n\le 1$ is already handled by Lemma \ref{lem-sub-bca-splittable} and the preceding remark. Also, for any $n$, the case $|\sigma|=2$ is trivial.

Suppose now that $n\ge 2$ and $|\sigma|\ge 3$. Since $\sigma$ is slim, it has one of the four possible forms $1\oplus\rho$, $\rho\oplus 1$, $1\ominus\rho$ or $\rho\ominus 1$ for a slim permutation~$\rho$.

Let $\Y^{-} = \cl(y_2\dotsb y_n)$. By symmetry, we may assume that $y_1$ is either $\D$ or~$\bac$. We will treat the two cases separately.

First, suppose that $y_1=\D$. Since $y$ is reduced, we know that $\Y^{-}$ is sum-closed. By Corollary~\ref{cor-forb-i}, we know that the minimal separable obstructions of $\Y$ are precisely the permutations of the form $1\oplus\tau$ and $\tau\oplus 1$ where $\tau$ is a minimal separable obstruction of~$\Y^{-}$. 

Our first goal will be to give a splitting of $\X$ into classes $\X_1, \X_2,\dotsc, \X_m$ where each $\X_i$ is of one of four types:
\begin{enumerate}[label = (\arabic*)]
\setlength{\itemsep}{0pt}
 \item the class $\uni$,
 \item the class $\Y^{-}$,
 \item the class $\Y \cap \Avs(\rho)$, or
 \item a class of the form $\D \Z$, where $\Z$ is a proper subclass of~$\Y^{-}$.
\end{enumerate}
To obtain such splitting, let a permutation~$\pi\in \X$ be given, and distinguish cases based on the structure of~$\sigma$. 

If $\sigma$ is of the form $1\ominus\rho$, we split $\pi$ into three parts as follows: the first part is the singleton permutation consisting of the leftmost element $\pi_1$ of~$\pi$, the second part consists of all those elements lying above $\pi_1$, and the third part consists of all the elements lying below $\pi_1$.

Observe that the second part forms a permutation from $\Y^{-}$ -- indeed, if the second part contained any minimal separable obstruction $\tau$ of $\Y^{-}$, then $\pi$ would contain $1\oplus\tau$, which is an obstruction of $\Y$, contradicting the assumption $\pi\in \X \subseteq \Y$.

Similarly, the third part of the splitting avoids the permutation $\rho$, since $\pi$ avoids~$\sigma = 1 \ominus \rho$. Thus we can merge any permutation in $\X$ from $\uni$, $\Y^{-}$ and $\Y \cap \Avs(\rho)$, which are classes of types (1), (2) and (3), respectively.

An analogous argument can be made in the case when $\sigma$ has the form $\rho\ominus 1$. Fix the index $i$ such that $\pi_i=1$, and split $\pi$ into three parts, where the first part is the singleton $\pi_i$, the second part is formed by the elements to the left of $\pi_i$, and the third part are the remaining elements. We again see that the second part avoids~$\rho$, and the third part is in~$\Y^-$.

Suppose now that $\sigma$ has the form $1\oplus\rho$ or $\rho\oplus 1$. Then $\sigma$ belongs not just to $\Y$ but actually also to~$\Y^{-}$. Therefore, $\X$ is a subclass of $\D (\Y^{-} \cap \Avs(\sigma))$, and in particular, $\X$ itself is a class of type~(4).

We now need to argue that the classes of types (1) through (4) can be split into representable subclasses of~$\Y$. For types (1) and (2), this is obvious, and for the class of type (3), it follows by induction since $\rho$ is shorter than $\sigma$.

Consider now a class of type (4), that is, a class equal to $\D \Z$ with $\Z$ a proper subclass of~$\Y^{-}$. We may assume, without loss of generality, that $\Z$ is equal to $\Y^-\cap \Avs(\tau)$ for some $\tau\in \Y^{-}$. Since the word $y^-$ is shorter than $y$, we know by the inductive hypothesis that $\Z$ is $\{\Y^{-}_1,\dotsc,\Y^{-}_k\}$-splittable, with $\Y^{-}_1,\dotsc,\Y^{-}_k$ being  representable proper subclasses of~$\Y^{-}$. By Observation~\ref{obs-inf}, $\D \Z$ is $\{\D \Y^{-}_1,\dotsc,\D \Y^{-}_k\}$-splittable. Moreover, since the word $y^{-}$ strictly dominates the reduced representation of any of the classes $\Y^{-}_i$, we see that $y$ strictly dominates the reduced representation of $\D \Y^{-}_i$, and therefore all the classes $\D \Y^-_i$ are proper subclasses of~$\Y$ by Theorem~\ref{thm-order}. We thus conclude that any subclass of $\Y$ of Type 4 has a splitting into representable proper subclasses of~$\Y$.

This completes the induction step for the case when $y_1=\D$. Let us deal with the case of~$y_1=\bac$. The proof of this case follows the same basic structure as the proof of the case $y_1=\D$. By Corollary~\ref{cor-forb-bac}, $\Obs(\Y)=\Obs(\I \Y^{-})\oplus 1$.

This time, we will first show that the class $\X$ is $\{\X_1,
\X_2,\dotsc,\X_m\}$-splittable where each $\X_i$ is of one of these four types:
\begin{enumerate}[label=(\arabic*)]
\setlength{\itemsep}{0pt}
 \item the singleton class $\uni$,
 \item the class $\D\I \Y^{-}$,
 \item the class $\Y \cap \Avs(\rho)$, or
 \item a class of the form $\bac \Z$, where $\Z$ is a proper
subclass of~$\Y^{-}$.
\end{enumerate}
To see this, let $\pi\in \X$, and distinguish cases based on the structure
of $\sigma$.

If $\sigma$ is of the form $1\ominus\rho$, we let $\pi_i$ be the largest element of $\pi$, and split $\pi$ into three parts: the elements to the left of $\pi_i$, the element $\pi_i$ itself, and the elements to the right of~$\pi_i$. The first part then belongs to $\I \Y^{-}$, the second to $\uni$, and the third to $\Y \cap \Avs(\rho)$.

Similarly, if $\sigma$ equals $\rho\ominus 1$, we split $\pi$ into the elements less than the rightmost element, the rightmost element itself, and the elements greater than the rightmost element. The three parts then belong to $\I \Y^{-}$, $\uni$ and $\Y \cap \Avs(\rho)$, respectively.

Suppose that $\sigma$ equals $1\oplus\rho$. Write $\pi$ as $\pi=\pi_1\ominus\pi_2\ominus\dotsb\ominus\pi_\ell$, where all the $\pi_i$ are skew-indecomposable. 
Since $\sigma$ (and hence also $\rho$) is slim, we know that $\rho$ has one of the 
two forms $\rho=1\ominus \eta$ or $\rho=\eta\ominus 1$, for a skew-indecomposable 
permutation~$\eta$. Suppose that $\rho$ equals $\eta\ominus 1$, the other case being 
analogous. We can then partition each permutation $\pi_i$ into three parts: the 
elements smaller than the rightmost one (which belong to $\I \Y^{-}$), the rightmost 
element itself, and the elements larger than the rightmost one (which avoid $\eta$).
Thus, the permutation $\pi$ itself can be merged from three permutations, belonging 
respectively to $\D\I \Y^{-}$, $\D$, and $\Y\cap\Avs(\eta)$, where we use the fact 
that $\D\Avs(\eta)=\Avs(\eta)$, since $\eta$ is skew-indecomposable.

%

Suppose now that $\sigma=\rho\oplus 1$. Then $\Avs(\sigma)=\bac \Avs(\rho)$ by Lemma~\ref{lem-forb-bac}. Therefore, by Lemma~\ref{lem-inters}, we have the identity 
\[
\Y\cap\Avs(\sigma)=\bac \Y^{-} \cap \bac\Avs(\rho)=\bac
(\Y^{-}\cap\Avs(\rho)). 
\]
Let us argue that $\Y^-\cap\Avs(\rho)$ is a proper subclass of $\Y^{-}$, i.e., that $\rho$ is in $\Y^{-}$. If $\rho$ is not in $\Y^{-}$, then $\rho$ cannot be in $\I \Y^{-}$ either, since $\rho$ is skew-decomposable. Thus $\rho$ contains a minimal separable obstruction $\tau$ of $\I \Y^{-}$, and therefore $\sigma$ contains the minimal separable obstruction $\tau\oplus 1$ of $\Y$, which is impossible, since $\sigma$ is in~$\Y$.

What remains is to argue that each class of one of the four types above has a splitting into proper representable subclasses of~$\Y$. This is clear for classes of types (1) and (2), and it follows from the inductive hypothesis for type (3). For type (4), the argument is analogous to the corresponding part of the argument in the case of $y_1=\D$, solved
previously.
\end{proof}

We are finally ready to prove our main result, which describes all the unsplittable proper subclasses of~$\Sep$.

\begin{theorem}\label{thm-main}
A proper subclass of $\Sep$ is unsplittable if and only if it is representable.
\end{theorem}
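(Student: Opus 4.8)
The forward direction (representable classes are unsplittable) is already in hand, so the plan is to prove the contrapositive of the converse: every unsplittable proper subclass $\C$ of $\Sep$ is representable. I would run the argument in three moves — first reducing to slim obstructions, then producing an initial representable ``container'' for $\C$, and finally pinning $\C$ down by a minimality argument that leans entirely on Lemma~\ref{lem-repre}.

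First I would show that an unsplittable $\C$ can have no non-slim minimal separable obstruction. If $\sigma\in\Obs(\C)$ is not slim it has size at least $3$, hence is sum- or skew-decomposable, so Lemma~\ref{lem-slim} (or its skew symmetry) gives $\Av(\sigma)\subseteq \Av(\sigma_1)\odot\dotsb\odot\Av(\sigma_r)$ with each $\sigma_j$ a slim \emph{proper} subpermutation of $\sigma$. Since $\C\subseteq\Av(\sigma)$ and every colour class in a colouring of a member of $\C\subseteq\Sep$ is a subpermutation of a separable permutation and hence separable, the witnessing colourings actually take values in $\Avs(\sigma_j)$; and because $\sigma_j<\sigma$ while $\sigma$ is a minimal obstruction, each $\sigma_j$ lies in $\C$. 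Therefore $\C\subseteq (\C\cap\Avs(\sigma_1))\odot\dotsb\odot(\C\cap\Avs(\sigma_r))$ with every factor a proper subclass of $\C$ (as $\sigma_j\in\C\setminus\Avs(\sigma_j)$), contradicting unsplittability. So every element of $\Obs(\C)$ is slim, and fixing any $\sigma\in\Obs(\C)$ gives $\C\subseteq\Avs(\sigma)$, which is representable by Corollary~\ref{cor-slimrepre}.

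Next, among all representable classes containing $\C$ I would choose one, $\Y$, that is minimal with respect to inclusion; this is legitimate because inclusion of representable classes coincides with domination of their unique reduced words by Theorem~\ref{thm-order}, and domination is well-founded, each elementary step strictly decreasing the pair (number of occurrences of $\acb,\bac,\bca,\cab$, total length) in lexicographic order. I claim $\C=\Y$. If not, I pick $\gamma\in\Y\setminus\C$ together with a minimal separable obstruction $\sigma'\le\gamma$ of $\C$; then $\sigma'\in\Y$ by downward closure, and $\sigma'$ is slim by the first step. Applying Lemma~\ref{lem-repre} to $\Y$ and $\sigma'$ produces representable proper subclasses $\Y_1,\dotsc,\Y_k$ of $\Y$ with $\Y\cap\Avs(\sigma')\subseteq\Y_1\odot\dotsb\odot\Y_k$. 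Since $\C\subseteq\Y\cap\Avs(\sigma')$, the same separability-of-colour-classes observation yields $\C\subseteq(\C\cap\Y_1)\odot\dotsb\odot(\C\cap\Y_k)$. Unsplittability now forbids every factor $\C\cap\Y_i$ from being proper, so $\C\subseteq\Y_i$ for some $i$; but $\Y_i$ is a representable class containing $\C$ and strictly contained in $\Y$, contradicting the minimality of $\Y$. Hence $\C=\Y$ is representable.

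The genuine content is carried entirely by Lemma~\ref{lem-repre}; the main obstacle in assembling the theorem from it is ensuring that a mere containment $\C\subseteq\Y_1\odot\dotsb\odot\Y_k$ into representable classes upgrades to an honest splitting of $\C$ into proper subclasses of itself. This is exactly where the two ingredients combine: the separability of colour classes lets me replace each $\Y_i$ by $\C\cap\Y_i$, and the minimality of the representable container $\Y$ converts the one unavoidable non-proper factor into a strictly smaller representable superclass of $\C$, which is the desired contradiction. No induction beyond the well-foundedness used to select $\Y$ is required.
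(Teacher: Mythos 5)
Your proposal is correct and takes essentially the same route as the paper's proof: slim obstructions via Lemma~\ref{lem-slim}, a representable superclass via Corollary~\ref{cor-slimrepre}, a minimal representable superclass via Theorem~\ref{thm-order} and well-foundedness, and then Lemma~\ref{lem-repre} combined with minimality and unsplittability to force equality with that superclass. The only cosmetic differences are that you justify well-foundedness directly from the domination steps (rather than also citing Higman's Lemma, as the paper does) and that you derive the final contradiction against minimality of $\Y$ rather than against unsplittability of $\X$ --- logically the same argument.
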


\begin{proof}
We already know that every representable class is unsplittable. To prove the converse,
let us first observe that every unsplittable proper subclass $\X$ of $\Sep$ is a subclass of a representable class~$\Y$. To see this, choose $\pi$ to be a minimal separable obstruction of~$\X$. We know that $\pi$ is slim, otherwise $\X$ would be splittable by Lemma~\ref{lem-slim}. Thus, $\X$ is a subclass of the
class $\Y=\Avs(\pi)$, which is representable by Corollary~\ref{cor-slimrepre}.

From Theorem~\ref{thm-order} and from the classical Higman's Lemma~\cite{Higman1952Ordering-by-div}, we may deduce that the ordering of the representable classes by inclusion is well-founded. In particular, for any unsplittable class $\X$, there is a (not necessarily unique) minimal representable superclass.

Suppose now that $\X$ is an unsplittable class and assume, for contradiction, that $\X$ is not representable. Let $\Y$ be a minimal representable superclass of~$\X$ and let $\sigma$ be a minimal separable obstruction of $\X$ that belongs to~$\Y$. Note that $\sigma$ is a slim permutation, otherwise $\X$ would be splittable by Lemma~\ref{lem-slim}.

By Lemma~\ref{lem-repre}, the class $\Y\cap\Avs(\sigma)$, and therefore also its subclass $\X$, is $\{\Y_1,\dotsc,\Y_k\}$-splittable, for some representable
proper subclasses $\Y_1,\dotsc,\Y_k$ of~$\Y$. Since $\Y$ was a minimal representable superclass of $\X$, none of the $\Y_i$ contains $\X$. Therefore $\X$ is splittable, contrary to our assumption.
\end{proof}

\begin{theorem}\label{thm-sepsplit}
Let $\X$ be a proper subclass of $\Sep$ and let $A$ be the set of representable subclasses of~$\X$. 
Then $\X$ is $A$-splittable.
\end{theorem}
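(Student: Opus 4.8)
The plan is to obtain Theorem~\ref{thm-sepsplit} as a direct bootstrap of Theorem~\ref{thm-main}, with no genuinely new combinatorics required. I would argue by induction on $\X$ with respect to the containment order on the proper subclasses of $\Sep$, which is well-founded (this is exactly the fact already used in the proof of Lemma~\ref{lem-sub-bca-splittable}, following from Higman's Lemma). So I would assume that every proper subclass of $\Sep$ strictly contained in $\X$ satisfies the conclusion, and prove it for $\X$. The base of this induction is the case when $\X$ is itself representable: then $\X$ is a representable subclass of itself, so $\X\in A$, and the length-one sequence $(\X)$ already witnesses that $\X\subseteq\X$ is $A$-splittable.

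For the inductive step I would suppose $\X$ is not representable. Since $\X$ is a proper subclass of $\Sep$, Theorem~\ref{thm-main} (unsplittable $\iff$ representable) tells us that $\X$ is splittable, so there exist proper subclasses $\C_1,\dots,\C_k$ of $\X$ with $\X\subseteq\C_1\merge\dotsb\merge\C_k$. Each $\C_i$ is strictly contained in $\X$, so the inductive hypothesis applies and $\C_i$ is $A_i$-splittable, where $A_i$ is the set of representable subclasses of $\C_i$. The crucial (but routine) remark is that since $\C_i\subseteq\X$, every representable subclass of $\C_i$ is also a representable subclass of $\X$; hence $A_i\subseteq A$ and each $\C_i$ is in fact $A$-splittable, say $\C_i\subseteq W_{i,1}\merge\dotsb\merge W_{i,m_i}$ with every $W_{i,j}\in A$. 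Substituting these into the splitting of $\X$ and invoking the associativity and monotonicity of $\merge$, I would conclude $\X\subseteq\odot_{i,j}W_{i,j}$, a finite merge of classes all lying in $A$, which exhibits $\X$ as $A$-splittable and closes the induction.

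I do not expect a serious obstacle here: the real work is already sealed inside Theorem~\ref{thm-main}, which manufactures a splitting of any non-representable proper subclass into proper subclasses, and the induction merely promotes such a splitting to one that uses only representable subclasses of $\X$. The points that need (minor) care are: verifying well-foundedness of the containment order so the induction is legitimate; checking that the recursion terminates correctly at the minimal classes, which are either representable or degenerate (finite classes, and the one-point class, split trivially by a length-one or empty merge); and the bookkeeping that a finite merge of finitely many finite merges is again a finite merge, so that $A$-splittability — which demands a finite sequence drawn from $A$ — is genuinely established rather than merely an infinite union.
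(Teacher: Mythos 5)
Your proof is correct, but it follows a genuinely different route from the paper's. The paper does not derive this theorem from Theorem~\ref{thm-main} by recursion on containment; instead it first reduces, using the finiteness of $\Obs(\X)$ (quoted from Atkinson et al.) and Lemma~\ref{lem-slim}, to the case where every minimal separable obstruction of $\X$ is slim, then fixes a minimal representable superclass $\Y\supseteq\X$ and inducts on the number of representable subclasses of $\Y$, which is finite by Theorem~\ref{thm-order}; the inductive step invokes Lemma~\ref{lem-repre} to split $\X$ over representable proper subclasses $\Y_1,\dotsc,\Y_k$ of $\Y$ and then recurses into the classes $\X\cap\Y_i$, which sit inside the representable classes $\Y_i$ having strictly fewer representable subclasses. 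Your argument instead uses Theorem~\ref{thm-main} as a black box (non-representable $\Rightarrow$ splittable into proper subclasses) together with well-founded induction on containment of subclasses of $\Sep$ --- a fact the paper itself invokes in the proof of Lemma~\ref{lem-sub-bca-splittable} --- plus monotonicity and associativity of $\odot$. What you gain is brevity and modularity: no second pass through the slimness reduction or through Lemma~\ref{lem-repre}. What the paper gains is an induction on a finite numerical parameter rather than on a well-founded order, and tighter control over which classes occur in the splitting (intersections of $\X$ with representable proper subclasses of the fixed minimal representable superclass). Two points you handle correctly but should keep explicit: (i) your base case reads $A$-splittability as permitting the one-term merge $\X\subseteq\X$ when $\X\in A$, i.e.\ the classes in the sequence must lie in $A$ but need not be proper subclasses of $\X$; this is also the paper's reading (its proof dismisses the case $\X=\Y$ as trivial on exactly these grounds), and it is forced, since representable classes are unsplittable in the proper sense; (ii) the degenerate classes $\emptyset$ and $\{\emptyset\}$ are glossed over in your argument, but the paper ignores them as well, so you are not worse off there.
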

\begin{proof}
Let $\X$ be a proper subclass of $\Sep$ and let $\Obs(\X)$ be its minimal separable obstructions. 
Note that $\Obs(\X)$ is finite, since $\Sep$ is known to be well-ordered by permutation 
containment (see Atkinson et al.~\cite[Corollary 2.6]{Atkinson2002Partially-well-}). We may assume without loss of generality that all 
the elements of $\Obs(\X)$ are slim, for otherwise we could use Lemma~\ref{lem-slim}
to show that $\X$ can be split into its subclasses whose minimal separable obstructions 
are slim.

Let $\Y$ be a minimal representable superclass of $\X$ (as argued in the proof of Theorem~\ref{thm-main},
such a representable superclass exists). Note that by Theorem~\ref{thm-order},
$\Y$ has only finitely many proper representable subclasses. We will proceed by 
induction on the number of representable subclasses of~$\Y$. If $\Y$ has no representable 
subclass beyond itself, then $\Y=\uni$ and the claim of the theorem is trivial.

Suppose $\Y\neq\uni$. If $\X=\Y$, the claim is again trivial. If $\X$ is a proper 
subclass of $\Y$, Lemma~\ref{lem-repre} shows that $\X$ is $\{\Y_1,\dotsc,\Y_k\}$-splittable,
where each $\Y_i$ is a representable proper subclass of $\Y$, and in particular, $\X$ 
is also $\{\X\cap\Y_1,\dotsc,\X\cap\Y_k\}$-splittable. Each class $\X\cap\Y_i$ is 
contained in the representable class $\Y_i$ which has fewer representable subclasses 
than $\Y$, hence we may apply induction to show that each $\X\cap\Y_i$ is splittable
over the set of its representable subclasses. It follows that $\X$ is splittable 
over its representable subclasses as well.
\end{proof}

As an easy consequence, we obtain the following result.

\begin{theorem}\label{thm-ab}
Let $A$ be a set of representable permutation classes. Let $B$ be the set of minimal 
representable classes that are not contained in any class in~$A$. 
Then $B$ is finite, and a proper subclass $\X$ of $\Sep$ is $A$-splittable
if and only if $\X$ does not contain any element of $B$ as a subclass.
\end{theorem}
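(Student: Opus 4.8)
The plan is to derive this as a fairly direct corollary of Theorems~\ref{thm-order} and~\ref{thm-sepsplit}, with the finiteness of $B$ coming from well-foundedness. First I would establish that $B$ is finite. By Theorem~\ref{thm-order} and Higman's Lemma, inclusion among representable classes is well-founded (this is exactly the observation used in the proof of Theorem~\ref{thm-main}). The set of representable classes not contained in any member of $A$ is an up-set under a well-founded order that is also, crucially, a well-quasi-order on representable classes; hence its set of minimal elements $B$ is finite. I would spell this out by noting that reduced words under domination form a wqo (Higman), so any set of representable classes has finitely many minimal elements.

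Next I would prove the easy direction: if $\X$ contains some $\Z\in B$ as a subclass, then $\X$ is not $A$-splittable. Since $\Z$ is representable it is unsplittable by Theorem~\ref{thm-main}, and $\Z\subseteq\X$. If $\X$ were $A$-splittable, then so would be its subclass $\Z$ (a splitting of $\X$ restricts to a splitting of $\Z$ over the classes $\Z\cap\C_i$ with $\C_i\in A$). But each such $\C_i\in A$ fails to contain $\Z$ (by definition of $B$, as $\Z$ lies in no member of $A$), so each $\Z\cap\C_i$ is a proper subclass of $\Z$; this contradicts the unsplittability of $\Z$. Here I would want to double check that the restriction argument is valid: a colouring witnessing $\X\subseteq\odot_i\C_i$ restricts to a colouring of any $\sigma\in\Z$, whose colour classes lie in $\C_i\cap\Z$, so indeed $\Z\subseteq\odot_i(\Z\cap\C_i)$.

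For the harder direction, suppose $\X$ contains no element of $B$ as a subclass; I must show $\X$ is $A$-splittable. By Theorem~\ref{thm-sepsplit}, $\X$ is splittable over the set of \emph{all} its representable subclasses, so $\X\subseteq\odot_{j}\Z_j$ for finitely many representable $\Z_j\subseteq\X$. The goal is to replace each $\Z_j$ by a member of $A$ that contains it, which preserves the splitting since $\odot$ is monotone in each argument. The key claim is therefore: every representable subclass $\Z$ of $\X$ is contained in some class of $A$. Suppose not; then $\Z$ is a representable class lying in no member of $A$, so by definition of $B$ (as the \emph{minimal} such classes) $\Z$ contains some $\W\in B$ as a subclass. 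But $\W\subseteq\Z\subseteq\X$, so $\X$ contains an element of $B$, contradicting our hypothesis. Hence each $\Z_j\subseteq\C_j$ for some $\C_j\in A$, and $\X\subseteq\odot_j\Z_j\subseteq\odot_j\C_j$, giving $A$-splittability.

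I expect the main obstacle to be the finiteness of $B$ together with the precise statement that every representable class lying in no member of $A$ dominates a minimal such class. Both hinge on the wqo property of domination on reduced words, which I would justify via Higman's Lemma applied to the alphabet $\cA$ together with the monotonicity of $\cl$ established in Theorem~\ref{thm-order}; the remaining steps are routine monotonicity-of-merge manipulations.
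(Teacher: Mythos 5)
Your proposal is correct and takes essentially the same route as the paper's own proof: finiteness of $B$ because antichains of representable classes are finite (via Higman's Lemma and Theorem~\ref{thm-order}), the forward direction by restricting an $A$-splitting to a representable subclass and invoking unsplittability of representable classes (Theorem~\ref{thm-main}), and the converse by combining Theorem~\ref{thm-sepsplit} with the fact that every representable subclass of $\X$ must lie in some member of $A$, since otherwise it would contain an element of $B$ by well-foundedness. The details you spell out (the restriction argument for merges and the well-foundedness step) are precisely what the paper leaves implicit, so there is no gap.
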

\begin{proof}
Since $B$ is an antichain in the inclusion order of representable 
classes, $B$ is finite. Let $\X$ be a proper subclass of $\Sep$. If $\X$ is $A$-splittable, then every representable 
subclass of $\X$ is $A$-splittable as well, and since representable classes are 
unsplittable, every representable subclass of $\X$ is contained in an element of~$A$.
In particular, no subclass of~$\X$ is in~$B$.

Conversely, if $\X$ has no element of $B$ as subclass, then all the representable subclasses
of $\X$ are subclasses of elements of $A$, and so $\X$ is $A$-splittable by Theorem~\ref{thm-sepsplit}.
\end{proof}

\section{Concluding remarks}

In the preceding sections we have answered the question: \emph{what characterises the unsplittable subclasses of $\Sep$?} 
This invites the question: \emph{given a collection, $A$, of unsplittable subclasses of $\Sep$, 
can you characterise the $A$-splittable classes?} 
The $A$-splittable subclasses of $\Sep$ are characterised by Theorem~\ref{thm-ab}. 
It is a natural problem to extend such classification to more general classes.

For certain specific cases of $A$, the characterisation in Theorem~\ref{thm-ab} 
remains valid even without the assumption that $\X$ is a subclass of $\Sep$. For 
instance, it can be shown that an arbitrary class $\X$ is $\uni$-splittable if and only if it does not contain 
$\I$ and $\D$ as subclasses, it is $\I$-splittable iff it does not contain $\D$, 
it is $\{\I,\D\}$-splittable iff it does not contain $\I\D$ and $\D\I$ (see Vatter~\cite{Vatter2016An-Erdos--Hajna}), 
it is $\{\bca,\cab\}$-splittable if it does not contain $\D\I$, etc.

Unfortunately, the assumption that $\X$ is a subclass of $\Sep$ in Theorem~\ref{thm-ab} 
cannot in general be omitted. For instance, it can be shown that the class $\X=\Av(1423)$, 
which is not a subclass of $\Sep$, cannot be split over its representable subclasses. 
In fact, it can be shown that this class is not even $\Sep$-splittable.

To extend our results from separable permutations to more general cases, we first need to get a better understanding 
of the unsplittable classes, especially those that cannot be obtained 
by inflations of smaller unsplittable classes. Our results show that the inflation 
monoid is closely related to splittability. Therefore, it seems natural
to investigate splittability in inflation-closed classes. This motivates the following question.

\begin{question}
What are the unsplittable subclasses of the least inflation-closed class containing $2413$ (or both $2413$ and $3142$)?
\end{question}

We believe that a better understanding of the consequences of splittability in permutation classes, and a more general understanding of the characteristics of unsplittable permutation classes could be a key element in the program of systematically investigating permutation classes by structural means.

\subsection{Acknowledgement}

We thank Vincent Vatter who provided the tikz code for Figures \ref{fig-sums} and \ref{fig-479832156}. The second author gratefully acknowledges the hospitality of the Computer Science Department of the University of Otago, where most of the research presented in this paper was conducted. 

\bibliographystyle{plain}
\bibliography{split.bib}

\end{document}